\newtheorem{theo}{Theorem}[section]
\newtheorem{lemm}[theo]{Lemma}
\newtheorem{prop}[theo]{Proposition}
\newtheorem{defi}[theo]{Definition}
\numberwithin{equation}{section}
\begin{document}

\title[Subwavelength phononic bandgaps]{Subwavelength Phononic Bandgaps in High-Contrast Elastic Media}

\author{Yuanchun Ren}
\address{School of Mathematics and Statistics, Center for Mathematics and Interdisciplinary Sciences, Northeast Normal University, Changchun, Jilin 130024, China}
\email{renyuanchun@nenu.edu.cn}

\author{Bochao Chen}
\address{School of Mathematics and Statistics, Center for Mathematics and Interdisciplinary Sciences, Northeast Normal University, Changchun, Jilin 130024, China}
\email{chenbc758@nenu.edu.cn}

\author{Yixian Gao}
\address{School of Mathematics and Statistics, Center for Mathematics and Interdisciplinary Sciences, Northeast Normal University, Changchun, Jilin 130024, China}
\email{gaoyx643@nenu.edu.cn}

\author{Peijun Li}
\address{SKLMS, ICMSEC, Academy of Mathematics and Systems Science, Chinese Academy of Sciences, Beijing 100190, China}
\email{lipeijun@lsec.cc.ac.cn}

\thanks{The research of BC was supported in part by NSFC grant (12471181) and the Fundamental Research Funds for the Central Universities (2412022QD032 and 2412023YQ003). The research of YG was supported by NSFC grant (12371187) and Science and Technology Development Plan Project of Jilin Province (20240101006JJ). The research of PL is supported by the National Key R\&D Program of China (2024YFA1012300).}

\subjclass[2010]{35B34, 35Q74, 45M05, 47J30, 74J20}

\keywords{Subwavelength resonances, phononic bandgaps, elastic waves, variational formulations, quasi-periodic layer potentials}

\begin{abstract}
Inspired by \cite{LZMZYCS2000}, this paper investigates subwavelength bandgaps in phononic crystals consisting of periodically arranged hard elastic materials embedded in a soft elastic background medium. Our contributions are threefold. First, we introduce the quasi-periodic Dirichlet-to-Neumann map and an auxiliary sesquilinear form to characterize the subwavelength resonant frequencies, which are identified through the condition that the determinant of a certain matrix vanishes. Second, we derive asymptotic expansions for these resonant frequencies and the corresponding nontrivial solutions, thereby establishing the existence of  subwavelength phononic bandgaps in elastic media. Finally, we analyze dilute structures in three dimensions, where the spacing between adjacent resonators is significantly larger than the characteristic size of an individual resonator, allowing the inter-resonator interactions to be neglected. In particular, an illustrative example is presented in which the resonator is modeled as a ball. 
\end{abstract}

\maketitle

\section{Introduction}\label{sec:1}

Phononic crystals are structured materials composed of periodically arranged elastic components, where both the elastic properties and spatial dimensions of these materials can be precisely controlled \cite{9}. Analogous to photonic crystals, which consist of periodically arranged electromagnetic materials \cite{3,Layer009}, the concept of phononic crystals has been developed to manipulate mechanical waves. When subjected to incident waves, both phononic and photonic crystals exhibit remarkable spectral properties, including the formation of bandgaps. A bandgap refers to a specific frequency range in which wave propagation is prohibited within the material. These structures have a broad spectrum of applications \cite{1,2}, such as designing frequency filters with pass and stop bands, reducing vibrations, attenuating sound, and enabling the fabrication of advanced acoustic devices.

In classical wave systems, the formation of bandgaps is governed by two primary mechanisms: (i) Bragg bandgaps, which arise from the periodicity of the structure, require the unit cell size to be comparable to the wavelength \cite{Layer009,FK1998Spectral}. However, at low frequencies, achieving this condition requires phononic or photonic crystals of impractically large size and mass \cite{9}, inherently limiting the generation of subwavelength bandgaps.
(ii) Local resonances, which occur within the unit cell, enable phononic or photonic crystals to exhibit effective properties equivalent to those of a medium with negative material parameters over specific frequency ranges. This mechanism promotes the formation of subwavelength bandgaps, overcoming the limitations of Bragg scattering \cite{14,Liu2002Three}.

Significant progress has been made in the study of Bragg bandgaps arising from multiple scattering. In the context of photonic crystals, the authors of \cite{FK1996Band} derived analytical formulas to determine the locations of bands and gaps in the spectrum. Subsequently, the dependence of the bandgap structure on the parameters of the periodic medium was analyzed both qualitatively and quantitatively in \cite{FK1998Spectral}. Further insights into photonic bandgaps can be found in \cite{DGP2000efficient,lopez2003materials,ammari2006layer} and the references therein. For phononic crystals, the authors of \cite{AKL2009elastic} employed the boundary integral method and the theory of meromorphic operator-valued functions to establish a complete asymptotic expansion for the band structure of two-dimensional phononic crystals and provided criteria for bandgap formation. This work is closely related to \cite{ammari2006layer}, as both utilize similar mathematical analysis techniques. Regarding three-dimensional phononic crystals, \cite{LP2022Bloch} derived a convergent power series for the Bloch spectrum and established an explicit bound on the radius of convergence.

However, the aforementioned studies did not address low-frequency bandgaps. To overcome this limitation, there has been growing interest in subwavelength bandgaps induced by subwavelength resonances. Embedding high-contrast resonators within the unit cell is an effective and feasible approach to achieving subwavelength resonances and, consequently, generating subwavelength bandgaps. Due to the significant contrast in density between air and water, bubbles embedded in a liquid act as strong acoustic scatterers, with the resulting low-frequency resonance known as Minnaert resonance \cite{11,12}. Given the inherent instability of bubbles in liquid, researchers have explored substituting the background medium with a soft elastomer instead. A rigorous mathematical analysis of subwavelength resonances in coupled acoustic-elastic systems is provided in \cite{Li2022minnaert,BochaoE}. In the dilute regime, where the sizes of resonators are significantly smaller than the distances between them, the effective medium theory for bubbly fluids has been developed. This theory demonstrates that the effective medium becomes dissipative above the Minnaert resonant frequency and exhibits a negative bulk modulus \cite{13}. These findings have also been extended to the study of bubble-based phononic crystals \cite{14}, where the existence of a subwavelength bandgap was rigorously established using layer potential techniques, supported by numerical experiments.

This paper focuses on phononic crystals composed of periodically arranged hard elastic resonators embedded within a soft elastic matrix, where the Lam\'{e} parameters and density of the resonators exhibit a high contrast with those of the background medium. The coupling of longitudinal and transverse waves, along with the vectorial nature of the displacement field, presents significant challenges in the study of elastic phononic crystals. Considerable progress has been made in both physical experiments and numerical simulations regarding the generation of subwavelength resonances and subwavelength phononic bandgaps \cite{LZMZYCS2000,sheng2003locally,Liu2005Analytic,Liu2002Three}. Specifically, by selecting lead balls coated with silicone rubber as the experimental unit cell, the study in \cite{LZMZYCS2000} demonstrated that high-contrast elastic materials can support local resonances, which, in turn, enable the formation of phononic bandgaps with a lattice constant two orders of magnitude smaller than the wavelength. When the coating material is extremely soft, the bandgap frequencies can be very low, leading to the emergence of subwavelength bandgaps, which effectively attenuate the propagation of low-frequency waves \cite{Liu2005Analytic}. Utilizing a simple structural design and subwavelength resonances, the authors of \cite{sheng2003locally} successfully fabricated a novel bandgap material with the potential to suppress the propagation of seismic waves.

Building on the results established in the physical literature, we aim to conduct a rigorous mathematical analysis of the existence of  subwavelength bandgaps in phononic crystals composed of high-contrast elastic materials in $\mathbb R^d$, where $d=2, 3$. Notably, in the case of a single resonator embedded in $\mathbb R^3$, the subwavelength resonant frequencies have been independently derived in \cite{LZ2024} and our previous work \cite{CGLR2024} using layer potential techniques and the variational method, respectively. In contrast, when resonators are arranged in a periodic configuration, two key differences arise: (i) According to the spectral properties of periodic elliptic operators \cite[p.122; Chapter 7.2]{Layer009}, to determine the spectrum of the original operator acting on $\mathbb R^d$, it suffices to analyze the spectrum of the operator restricted to functions defined on the torus. This necessitates the use of boundary integral operators with quasi-periodicity, whose properties require further investigation. (ii) Rellich's lemma, which ensures the uniqueness of solutions to exterior problems, is no longer applicable in periodic settings. Unlike the case of a single resonator, in periodic structures, the solution to the Dirichlet-type problem in the exterior of the resonator is not unique for all real frequencies.

In this paper, we employ the variational method to establish the existence of  subwavelength bandgaps in elastic phononic crystals, induced by subwavelength resonances. For systems with periodic structures, we refer to \cite{LY2020Convergence,GL2016Analysis} for the well-posedness analysis within a variational framework. The Floquet--Bloch theory allows us to focus exclusively on the spectral properties of the Lam\'{e} operator within a single unit cell. Using the fact that the eigenvalues of the Lam\'{e} operator with Dirichlet and quasi-periodic boundary conditions are bounded below by a positive constant, we introduce the quasi-periodic Dirichlet-to-Neumann (DtN) map, which is well-defined in the low-frequency regime. Utilizing this map, we reformulate the scattering problem in the unit cell as a boundary value problem posed within the resonator. It is worth mentioning that in the case of a single hard elastic resonator studied in \cite{CGLR2024}, the solution space of the static Lam\'{e} system with Neumann boundary conditions on the resonator has dimension $\frac {d (d+1)}{2}$. In contrast, for phononic crystals, due to the quasi-periodicity on the boundary of the unit cell and the continuity conditions at the resonator boundary, the solution space of the corresponding static Lam\'{e} system is spanned by $\boldsymbol e_i, i=1, \dots, d$, where $\boldsymbol e_i$ are the standard basis vectors in $\mathbb R^d$, resulting in a space of dimension $d$. To characterize the subwavelength resonant frequencies, we introduce an appropriate sesquilinear form and determine them by setting the determinant of a $d\times d$ Hermitian matrix to zero. Using asymptotic analysis, we derive explicit expansions for the resonant frequencies and the corresponding nontrivial solutions. The analysis reveals that real resonant frequencies exist in the exterior problem outside the resonator. However, since our primary interest lies in subwavelength resonant frequencies, it suffices to consider the boundary value problem within the resonator. Finally, based on the derived subwavelength resonant frequencies, whose leading-order terms are closely related to the eigenvalues of the Hermitian matrix, we establish the existence of subwavelength bandgaps in phononic crystals. Furthermore, we present a case of dilute structures in three dimensions, in which the spacing between adjacent resonators is significantly greater than the characteristic size of an individual resonator. Specifically, we examine the scenario in which the resonator is modeled as a ball. 

The structure of the paper is as follows. In Section \ref{sec:2}, we present the problem formulation, along with the necessary definitions and key properties of quasi-periodic layer potential operators. In Section \ref{sec:3}, we establish the existence of  subwavelength phononic bandgaps by analyzing the derived subwavelength resonant frequencies. Additionally, we provide the corresponding nontrivial solutions. Section \ref{sec:4} focuses on the dilute structures  of three-dimensional phononic crystal and presents an illustrative example where the resonator is a ball. Section \ref{sec:conclusion} provides a summary of our findings and explores potential directions for future research.

\section{Problem formulation and preliminaries}\label{sec:2}

This section presents the elastic wave equation in periodic structures and characterize elastic wave scattering in phononic crystals. Additionally, we introduce quasi-periodic spaces and establish the key properties of quasi-periodic layer potentials as a foundation for further analysis.

\subsection{Problem formulation}

Consider an arbitrarily shaped elastic inclusion $D$ that is periodically arranged in the unit cell $Y=[0,1]^d$ for $d=2, 3$, where $\overline{D}\subset Y$ and $D$ is a simply connected domain with a boundary  $\partial D\in C^{2}$. Define $\Omega$ as the set obtained by translating $D$ by all integer vectors $\boldsymbol n$ in $\mathbb{Z}^d$, i.e., $\Omega=\bigcup_{\boldsymbol{n}\in \mathbb{Z}^d}(D+\boldsymbol{n}).$ The three-dimensional configuration is illustrated in Figure \ref{Fig1}.

\begin{figure}[h]
\centering
\includegraphics[scale=1.2]{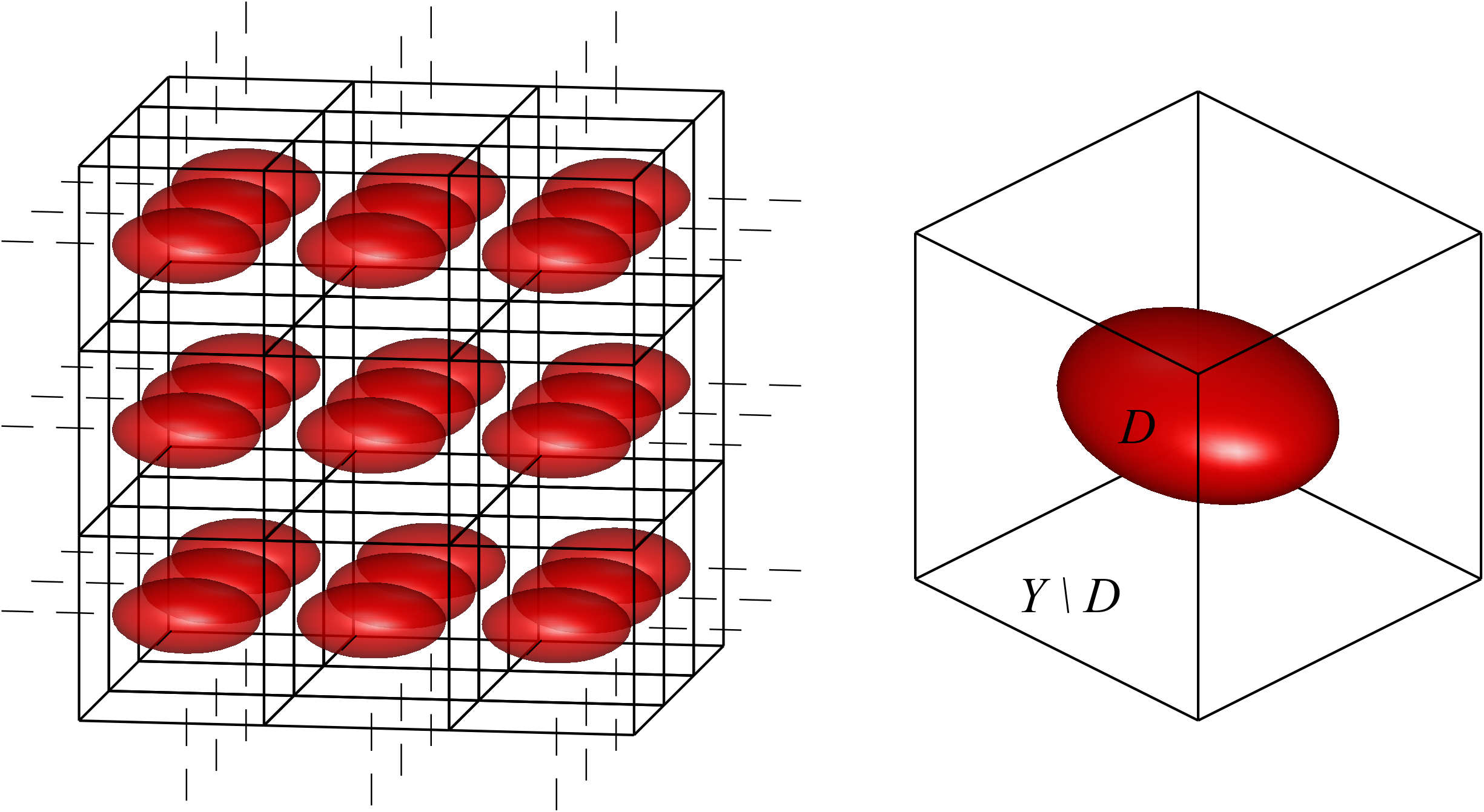}
\caption{A schematic representation of the problem geometry: (left) the elastic phononic crystal; (right) the unit cell.}
\label{Fig1}
\end{figure}

Consider the time-harmonic elastic wave equation
\begin{align*}
\mathcal{L}_{\widehat{\lambda},\widehat{\mu}}\boldsymbol{u}(\boldsymbol x)+\widehat{\rho}\omega^2\boldsymbol{u}(\boldsymbol x)=0,\qquad\boldsymbol{x}\in\mathbb{R}^d,
\end{align*}
where $\boldsymbol{u}(\boldsymbol{x}):=(u_i(\boldsymbol{x}))_{i=1}^d$ represents the elastic displacement field, $\omega>0$ is the angular frequency, and $\mathcal{L}_{\widehat{\lambda},\widehat{\mu}}$ is the Lam\'{e} operator, given by 
\begin{align*}
\mathcal{L}_{\widehat{\lambda},\widehat{\mu}}\boldsymbol{u}:=\widehat{\mu}\nabla\cdot(\nabla \boldsymbol{u}+\nabla \boldsymbol{u}^\top)+\widehat{\lambda}\nabla\nabla\cdot\boldsymbol{u}. 
\end{align*}
Here, $\nabla \boldsymbol{u}$ denotes the matrix $(\partial_{x_j}u_i)_{i,j=1}^d$, and the superscript $^\top$ denotes  matrix transpose. The Lam\'{e} parameters $(\widehat{\lambda},\widehat{\mu})$ characterize the elastic media, while $\widehat{\rho}$ represents its density. Specifically, they are defined as
\[
(\widehat{\lambda},\widehat{\mu};\mathbb{R}^d)=(\widetilde{\lambda},\widetilde{\mu};\Omega)\cup (\lambda,\mu;\mathbb{R}^d\backslash \overline{\Omega}),\quad (\widehat{\rho};\mathbb{R}^d)=(\widetilde{\rho};\Omega)\cup (\rho;\mathbb{R}^d\backslash \overline{\Omega}).
\]
Furthermore, we assume that the Lam\'{e} parameters $(\lambda,\mu)$ in $\mathbb{R}^d\backslash\overline{\Omega}$ satisfy the strong convexity conditions:
\begin{align}\label{convexity}
\mu>0,\quad  d\lambda+2\mu>0.
\end{align}

Let $\boldsymbol{\nu}$ be the unit outward normal to $\partial \Omega$. Define the traction operator, also known as the conormal derivative, on $\partial \Omega$ as:
\begin{align*}
\partial_{{\boldsymbol{\nu}}}\boldsymbol{u}:=\widehat{\lambda}(\nabla \cdot \boldsymbol{u})\boldsymbol{\nu}+\widehat{\mu}(\nabla \boldsymbol{u}+\nabla \boldsymbol{u}^\top)\boldsymbol{\nu}.
\end{align*}
Let $c_s,c_p, \widetilde{c}_s$, and $\widetilde{c}_p$ represent the shear and compressional wave velocities in $\mathbb{R}^d\setminus\overline{\Omega}$ and $\Omega$, respectively, which are given by 
\begin{align}\label{velocity}
\left\{
\begin{array}{l}
c_s = \sqrt{\mu / \rho} \\
c_p = \sqrt{(\lambda + 2\mu) / \rho},
\end{array}
\right.
& \quad
\left\{
\begin{array}{l}
\tilde{c}_s = \sqrt{\tilde{\mu} / \tilde{\rho}} \\
\tilde{c}_p = \sqrt{(\tilde{\lambda} + 2\tilde{\mu}) / \tilde{\rho}}.
\end{array}
\right.
\end{align}

Since the elastic phononic crystal is spatially periodic, we apply Floquet--Bloch theory \cite[p.121; Chapter 7.1]{Layer009} and formulate the elastic wave scattering problem in the unit cell $Y$:
\begin{align}\label{Lame1}
\left\{ \begin{aligned}
&\mathcal{L}_{\lambda,\mu}\boldsymbol{u}+\rho\omega^2\boldsymbol{u}=0&&\text{in }Y\backslash\overline{D},\\
&\mathcal{L}_{\widetilde{\lambda},\widetilde{\mu}}\boldsymbol{u}+\widetilde{\rho}\omega^2\boldsymbol{u}=0&&\text{in } D,\\
&e^{-\mathrm{i}\boldsymbol\alpha\cdot\boldsymbol x}\boldsymbol{u}\text{ is periodic in the whole space},
\end{aligned}\right.
\end{align}
where the quasi-momentum variable $\boldsymbol\alpha$ belongs to the Brillouin zone $[-\pi,\pi]^d$. It is important to note that if $e^{-\mathrm{i}\boldsymbol\alpha\cdot\boldsymbol x}\boldsymbol{u}$ is periodic, then the vector function $\boldsymbol{u}$ is said to be $\boldsymbol\alpha$-quasi-periodic or simply quasi-periodic. Moreover, the elastic displacement field $\boldsymbol u$ satisfies the transmission conditions on $\partial D$:
\begin{equation}\label{transmission}
 \boldsymbol{u}|_+=\boldsymbol{u}|_-,\quad 
 \partial_{\boldsymbol{\nu}} \boldsymbol{u}|_+=\partial _{\widetilde{\boldsymbol{\nu}}}\boldsymbol{u}|_-
\end{equation}
where the notation $|_{+}$ and $|_{-}$ denotes the traces of the function on $\partial D$ taken from the exterior and interior of $D$, respectively.

To quantify the contrast in the Lam\'{e} parameters and density between hard and soft elastic materials, we introduce the parameters $\delta$ and $\epsilon$, respectively, defined as
\begin{align}\label{ratio}
(\widetilde{\lambda},\widetilde{\mu})=\frac{1}{\delta}(\lambda,\mu), \quad\widetilde{\rho}=\frac{1}{\epsilon}\rho.
\end{align}
Furthermore, we define $\tau$ as the contrast in shear and compressional wave velocities between the interior and exterior of $\Omega$:
\begin{align}\label{contrast}
\tau=\frac{c_s}{\widetilde{c}_s}=\frac{c_p}{\widetilde{c}_p}=\sqrt{\frac{\delta}{\epsilon}},
\end{align}
where the last equality follows from \eqref{velocity} and \eqref{ratio}. We assume that the contrast in wave velocities between the interior and exterior of $D$ is negligible, i.e., $\tau=\mathcal{O}(1)$.

By applying  \eqref{ratio} and \eqref{contrast}, the scattering problem \eqref{Lame1}--\eqref{transmission} can be reformulated as the following Lam\'{e} system:
\begin{align}\label{Lame}
\left\{ \begin{aligned}
&\mathcal{L}_{\lambda,\mu}\boldsymbol{u}+\rho\omega^2\boldsymbol{u}=0&&\text{in }Y\backslash\overline{D},\\
&\mathcal{L}_{\lambda,\mu}\boldsymbol{u}+\rho\tau^2\omega^2\boldsymbol{u}=0&&\text{in } D,\\
&e^{-\mathrm{i}\boldsymbol\alpha\cdot\boldsymbol x}\boldsymbol{u}\text{ is periodic in the whole space},
\end{aligned}\right.
\end{align}
subject to the transmission conditions on $\partial D$:
\begin{equation}\label{TC}
\boldsymbol{u}|_+=\boldsymbol{u}|_-,\quad \delta\partial_{ \boldsymbol{\nu}} \boldsymbol{u}|_+=\partial_{\boldsymbol{\nu}}\boldsymbol{u}|_-.
\end{equation}
According to the spectral properties of periodic elliptic operators \cite[p.122; Chapter 7.2]{Layer009}, the Lam\'{e} system \eqref{Lame}--\eqref{TC} admits nontrivial solutions for a discrete set of angular frequencies, denoted by $\omega_i^{\boldsymbol \alpha}$ for $i \in \mathbb{N}^{+}$.  Moreover, each function $\omega_i^{\boldsymbol \alpha}$
is continuous with respect to $\boldsymbol {\alpha } \in [-\pi, \pi]$.

Assume that the hard elastic inclusion $D$ has a characteristic size of order one and that the Lam\'{e} parameters and density exhibit high contrasts. Our focus is further directed toward the subwavelength regime. Specifically, we impose the following asymptotic assumptions:
\begin{align*}
|D|=\mathcal{O}(1),\quad\delta\rightarrow 0^+,\quad\epsilon\rightarrow 0^+,\quad\omega\rightarrow 0.
\end{align*}

\subsection {Quasi-periodic layer potentials}

Before introducing the definitions and properties of quasi-periodic layer potentials for the Lam\'{e} system, we first define the relevant quasi-periodic function spaces:
\begin{align*}
\boldsymbol H^1_{\boldsymbol{\alpha},\mathrm{per}}(D)&:=\left\{\boldsymbol u\in H^1(D)^d : \boldsymbol u(\boldsymbol x+\boldsymbol n)=e^{\mathrm{i}\boldsymbol{\alpha}\cdot \boldsymbol n}\boldsymbol u (\boldsymbol x) \right\},\\
\boldsymbol H^{\pm\frac{1}{2}}_{\boldsymbol{\alpha},\mathrm{per}}(\partial D)&:=\left\{\boldsymbol u\in H^{\pm\frac{1}{2}}(\partial D)^d : \boldsymbol u(\boldsymbol x+\boldsymbol n)=e^{\mathrm{i}\boldsymbol{\alpha}\cdot \boldsymbol n}\boldsymbol u (\boldsymbol x) \right\},\\
\boldsymbol H^1_{\boldsymbol{\alpha},\mathrm{per}}(Y\backslash \overline{D})&:=\left\{\boldsymbol u\in H^1(Y\backslash \overline{D})^d : \boldsymbol u(\boldsymbol x+\boldsymbol n)=e^{\mathrm{i}\boldsymbol{\alpha}\cdot \boldsymbol n}\boldsymbol u (\boldsymbol x)\right\}.
\end{align*}

We define the $d$-dimensional quasi-periodic Green's function for $k\neq0$, denoted by $\boldsymbol{G}^{\boldsymbol{\alpha},k}=(G_{ij}^{\boldsymbol\alpha,k})^d_{i,j=1}$, which satisfies
\begin{align*}
(\mathcal{L}_{\lambda,\mu}+k^2)\boldsymbol{G}^{\boldsymbol\alpha,k}(\boldsymbol x-\boldsymbol y)=\sum_{\boldsymbol{n}\in\mathbb{Z}^d}\boldsymbol{\delta}(\boldsymbol x-\boldsymbol y-\boldsymbol{n})e^{\mathrm{i}\boldsymbol{n}\cdot\boldsymbol\alpha}\boldsymbol{I},
\end{align*}
where $\boldsymbol{I}$ is the $d\times d$ identity matrix. Under the assumption that
\[
\frac{k}{\sqrt{\mu}}\neq|2\pi \boldsymbol n+\boldsymbol\alpha| \quad\text{and}\quad \frac{k}{\sqrt{\lambda+2\mu}}\neq|2\pi \boldsymbol n+\boldsymbol\alpha|,
\]
the Green’s function has the explicit representation
\begin{align*}
G_{ij}^{\boldsymbol\alpha,k}(\boldsymbol x-\boldsymbol y)&=\frac{\boldsymbol{\delta}_{ij}}{\mu}\sum_{\boldsymbol{n}\in\mathbb{Z}^{d}}\frac{e^{\mathrm{i}(2\pi \boldsymbol{n}+\boldsymbol\alpha)\cdot(\boldsymbol x-\boldsymbol y)}}{\frac{k^2}{\mu}-|2\pi \boldsymbol{n}+\boldsymbol\alpha|^{2}}\\
&\quad +\left(\frac{1}{\mu}-\frac{1}{\lambda+2\mu}\right)\sum_{\boldsymbol{n}\in\mathbb{Z}^{d}}\frac{e^{\mathrm{i}(2\pi \boldsymbol{n}+\boldsymbol\alpha)\cdot(\boldsymbol x-\boldsymbol y)}(2\pi n_i+\alpha_i)(2\pi  n_j+\alpha_j)}{\left(\frac{k^2}{\lambda+2\mu}-|2\pi \boldsymbol{n}+\boldsymbol\alpha|^{2}\right)\left(\frac{k^2}{\mu}-|2\pi \boldsymbol{n}+\boldsymbol\alpha|^{2}\right)}, 
\end{align*}
where $\boldsymbol{\delta}_{ij}$ is the Kronecker delta and $\alpha_i$ denotes the $i$-th component of the vector $\boldsymbol \alpha$. 

This paper focuses on the regime where $k\rightarrow 0$ while $\boldsymbol \alpha\neq 0$, ensuring the validity of the aforementioned assumption. For $k=0$ and $\boldsymbol\alpha\neq0$,  we define the quasi-periodic Green's function $\boldsymbol G^{\boldsymbol\alpha,0}=(G_{ij}^{\boldsymbol\alpha,0})^d_{i,j=1}$ as
\begin{equation*}
G_{ij}^{\boldsymbol\alpha,0}(\boldsymbol x-\boldsymbol y)=\frac{1}{\mu}\sum_{\boldsymbol n\in\mathbb{Z}^{d}}e^{\mathrm{i}(2\pi \boldsymbol n+\boldsymbol \alpha)\cdot(\boldsymbol x-\boldsymbol y)}\left(\frac{-\boldsymbol{\delta}_{ij}}{|2\pi \boldsymbol n+\boldsymbol\alpha|^{2}}
+\frac{\lambda+\mu}{\lambda+2\mu}\frac{(2\pi n_i+\alpha_i)(2\pi n_j+\alpha_j)}{|2\pi\boldsymbol n+\boldsymbol\alpha|^{4}}\right).
\end{equation*}

Using the commutativity of the Floquet transform $\mathcal{F}$ (cf. \cite{Layer009}, p121) and the periodic elliptic operator $\mathcal{L}_{\lambda,\mu}$, an alternative representation for $\mathbf{G}^{\boldsymbol\alpha,k}$ is given by
\begin{align*}
\boldsymbol{G}^{\boldsymbol{\alpha},k}(\boldsymbol x-\boldsymbol y)=\mathcal{F}[\boldsymbol{G}^{k}](\boldsymbol x-\boldsymbol y;\boldsymbol \alpha)=\sum_{\boldsymbol{n}\in\mathbb{Z}^{d}}\boldsymbol{G}^{k}(\boldsymbol x-\boldsymbol y-\boldsymbol{n})e^{\mathrm{i}\boldsymbol{n}\cdot\boldsymbol\alpha},
\end{align*}
where $\boldsymbol{G}^{k}(\boldsymbol x- \boldsymbol y)$ denotes the fundamental solution to the equation
\begin{align*}
(\mathcal{L}_{\lambda,\mu}+k^2)\boldsymbol{G}^{k}(\boldsymbol x-\boldsymbol y)=\boldsymbol{\delta}(\boldsymbol x-\boldsymbol y)\boldsymbol{I}.
\end{align*}

Explicitly, the fundamental solution matrix $\boldsymbol{G}^{k}(\boldsymbol x)=(G_{ij}^{k})^d_{i,j=1} (\boldsymbol x)$ takes the following forms in $d$-dimensional space: For $d=2$, 
\begin{equation*}
G_{ij}^{k}(\boldsymbol x)=\begin{cases}\frac{1}{4\pi}\left(\frac{1}{\mu}+\frac{1}{\lambda+2\mu}\right)\boldsymbol{\delta}_{ij}\ln|\boldsymbol x|-\frac{1}{4\pi}\left(\frac{1}{\mu}-\frac{1}{\lambda+2\mu}\right)\frac{ x_ix_j}{|\boldsymbol x|^2},&k=0,\\
 -\frac{\mathrm{i}}{4\mu}\delta_{ij}H_{0}^{(1)}\big(\frac{k|\boldsymbol x|}{\sqrt{\mu}}\big)+\frac{\mathrm{i}}{4k^{2}}\partial_{i}\partial_{j}\left(H_{0}^{(1)}\big(\frac{k|\boldsymbol x|}{\sqrt{\lambda+2\mu}}\big)-H_{0}^{(1)}\big(\frac{k|\boldsymbol x|}{\sqrt{\mu}}\big)\right),&k\neq 0,\end{cases}
\end{equation*}
and for $d=3$, 
\begin{equation*}
G_{ij}^{k}(\boldsymbol x)=\begin{cases}-\frac{1}{8\pi}\left(\frac{1}{\mu}+\frac{1}{\lambda+2\mu}\right)\frac{\boldsymbol{\delta}_{ij}}{|\boldsymbol x|}-\frac{1}{8\pi}\left(\frac{1}{\mu}-\frac{1}{\lambda+2\mu}\right)\frac{x_i x_j}{|\boldsymbol x|^3},&k=0,\\
-\frac{\boldsymbol{\delta}_{ij}}{4\pi\mu|x|}e^{\frac{\mathrm{i}k|\boldsymbol x|}{\sqrt{\mu}}}+\frac{1}{4\pi k^2}\partial_i\partial_j\left(\frac{e^{\frac{\mathrm{i}k|\boldsymbol x|}{\sqrt{\lambda+2\mu}}}}{|\boldsymbol x|}-\frac{e^{\frac{\mathrm{i}k|\boldsymbol x|}{\sqrt{\mu}}}}{|\boldsymbol x|}\right),&k\neq 0.\end{cases}
\end{equation*}
Here, $H_{0}^{(1)}$ denotes the Hankel function of the first kind of order zero.  

For any $\boldsymbol x\in\mathbb{R}^d\backslash\partial D$, we define the quasi-periodic single-layer potential with the density function $\boldsymbol \varphi \in H^{-\frac{1}{2}}(\partial D)^d $  as
\begin{align*}
\boldsymbol{\widetilde{\mathcal{S}}}^{\boldsymbol{\alpha},k}_{D}[\boldsymbol\varphi](\boldsymbol x):=\int_{\partial D}\boldsymbol{G}^{\boldsymbol{\alpha},k}(\boldsymbol x-\boldsymbol y)\boldsymbol\varphi(\boldsymbol y)\mathrm{d}\sigma(\boldsymbol y).
\end{align*}
For any $\boldsymbol x\in\partial{D}$, the associated boundary integral operator, denoted by $\boldsymbol{\mathcal{S}}^{\boldsymbol{\alpha},k}_{D}$, is defined as
\begin{align*}
\boldsymbol{\mathcal{S}}^{\boldsymbol{\alpha},k}_{D}[\boldsymbol\varphi](\boldsymbol x)&:=\int_{\partial D}\boldsymbol{G}^{\boldsymbol{\alpha},k}(\boldsymbol x-\boldsymbol y)\boldsymbol\varphi(\boldsymbol y)\mathrm{d}\sigma(\boldsymbol y).
\end{align*}
Given that $\boldsymbol{G}^{\boldsymbol{\alpha},k}$ exhibits the same singularity behavior as  $\boldsymbol{G}^{k}$ at $\boldsymbol x=\boldsymbol y$, the standard jump relations hold:
\begin{align}\label{quasi-jump}
\partial_{\boldsymbol{\nu}}\boldsymbol{\widetilde{\mathcal{S}}}^{\boldsymbol{\alpha},k}_{D}[\boldsymbol\varphi]\big|_{\pm}(\boldsymbol x)=\big(\pm\frac{1}{2}\boldsymbol{\mathcal{I}}+(\boldsymbol{\mathcal{K}}^{-\boldsymbol{\alpha},k}_{D})^*\big)[\boldsymbol\varphi](\boldsymbol x),\quad \boldsymbol x\in\partial{D}.
\end{align}
Here, $\boldsymbol {\mathcal {I}} $ denotes the identity operator and  $(\boldsymbol{\mathcal{K}}^{-\boldsymbol{\alpha},k}_{D})^*$ is the quasi-periodic Neumann--Poincar\'{e} operator, defined as 
\begin{align*}
(\boldsymbol{\mathcal{K}}^{-\boldsymbol{\alpha},k}_{D})^*[\boldsymbol\varphi](\boldsymbol x)&:=\mathrm{p.v.}\int_{\partial D} \partial_{\boldsymbol{\nu}_{\boldsymbol x}}\boldsymbol{G}^{\boldsymbol{\alpha},k}(\boldsymbol x-\boldsymbol y)\boldsymbol\varphi(\boldsymbol y)\mathrm{d}\sigma(\boldsymbol y),\quad \boldsymbol x\in\partial{D},
\end{align*}
where $\mathrm{p.v.}$ denotes the Cauchy principal value.

Next, we derive  the asymptotic expansions of the quasi-periodic fundamental solution $\boldsymbol{G}^{\boldsymbol{\alpha},k}$ and the boundary integral operators  $\boldsymbol{\mathcal{S}}^{\boldsymbol{\alpha},k}_{D}$ and $(\boldsymbol{\mathcal{K}}^{-\boldsymbol{\alpha},k}_{D})^*$ in the regime where $\boldsymbol\alpha\neq 0$ and $k\rightarrow 0$.

\begin{lemm}\label{le:series}
For $\boldsymbol\alpha\neq 0$ and $k\rightarrow 0$, it holds that
\begin{align}\label{G:series}
G^{\boldsymbol\alpha,k}_{ij}(\boldsymbol x-\boldsymbol y)
=G^{\boldsymbol\alpha,0}_{ij}(\boldsymbol x-\boldsymbol y)+\sum^\infty_{l=1}k^{2l}G^{\boldsymbol\alpha}_{ij;l}(\boldsymbol x-\boldsymbol y),
\end{align}
where
\begin{align*}
G^{\boldsymbol\alpha}_{ij;l}(\boldsymbol x-\boldsymbol y)=\sum_{\boldsymbol n\in \mathbb{Z}^d}e^{\mathrm{i}(2\pi \boldsymbol n+\boldsymbol\alpha)\cdot(\boldsymbol x-\boldsymbol y)}\Bigg(\frac{-\boldsymbol{\delta}_{ij}}{|2\pi \boldsymbol n+\boldsymbol\alpha|^{2(l+1)}}
\quad +\bigg(1-\big(\frac{\mu}{\lambda+2\mu}\big)^{l+1}\bigg)\frac{(2\pi n_i+\alpha_i)(2\pi n_j+\alpha_j)}{|2\pi \boldsymbol n+\boldsymbol\alpha|^4}\Bigg).
\end{align*}
\end{lemm}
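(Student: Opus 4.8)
The plan is to work directly from the spectral (Fourier-series) representation of $\boldsymbol{G}^{\boldsymbol\alpha,k}$ recorded above, in which the entire $k$-dependence is carried by the two scalar factors $\frac{k^2}{\mu}-|2\pi\boldsymbol n+\boldsymbol\alpha|^2$ and $\frac{k^2}{\lambda+2\mu}-|2\pi\boldsymbol n+\boldsymbol\alpha|^2$ in the denominators, and to expand each of these as a geometric series in $k^2$. The structural fact that makes this legitimate is that $\boldsymbol\alpha\neq\boldsymbol 0$ in the Brillouin zone, so there is a spectral gap $\gamma_0:=\min_{\boldsymbol n\in\mathbb Z^d}|2\pi\boldsymbol n+\boldsymbol\alpha|=|\boldsymbol\alpha|>0$; hence for every $\boldsymbol n\in\mathbb Z^d$ and every $k$ with $k^2<\gamma_0^2\min\{\mu,\lambda+2\mu\}$ one has $\frac{k^2}{\mu|2\pi\boldsymbol n+\boldsymbol\alpha|^2}\le\frac{k^2}{\mu\gamma_0^2}<1$, and similarly with $\lambda+2\mu$ in place of $\mu$, so that
\begin{align*}
\frac{1}{\frac{k^2}{\mu}-|2\pi\boldsymbol n+\boldsymbol\alpha|^2}=-\sum_{l=0}^{\infty}\frac{k^{2l}}{\mu^{l}\,|2\pi\boldsymbol n+\boldsymbol\alpha|^{2(l+1)}},
\end{align*}
with geometric convergence uniform in $\boldsymbol n$, and the product of the two denominators expands via a Cauchy product (equivalently, a partial-fraction decomposition in the variable $k^2$). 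Expanding instead the free-space fundamental solution $\boldsymbol G^{k}$ through the Floquet representation would be less convenient here, since $\boldsymbol G^{k}$ carries a $\log k$ term in two dimensions.

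Next I would substitute these expansions into the two sums defining $G^{\boldsymbol\alpha,k}_{ij}$ and regroup by powers of $k^2$. The $l=0$ contribution reproduces exactly the defining series of $G^{\boldsymbol\alpha,0}_{ij}$, after rewriting $\frac1\mu-\frac1{\lambda+2\mu}=\frac{\lambda+\mu}{\mu(\lambda+2\mu)}$; the contribution of order $k^{2l}$ with $l\ge1$ then consists of a $\boldsymbol\delta_{ij}$-part coming from the first sum and a $(2\pi n_i+\alpha_i)(2\pi n_j+\alpha_j)$-part coming from the second, and simplifying the velocity-dependent prefactors — using, for instance, $\sum_{p+q=l}\mu^{-p}(\lambda+2\mu)^{-q}\big(\frac1\mu-\frac1{\lambda+2\mu}\big)=\mu^{-(l+1)}-(\lambda+2\mu)^{-(l+1)}$ — identifies this coefficient with $G^{\boldsymbol\alpha}_{ij;l}$ from \eqref{G:series}. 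This step is a direct, if slightly tedious, computation.

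The only point that genuinely requires care is the justification of the termwise rearrangement of the resulting double series $\sum_{\boldsymbol n\in\mathbb Z^d}\sum_{l\ge1}$ — that is, the interchange of $\sum_{\boldsymbol n}$ with $\sum_{l}$ and the legitimacy of reading off each $k^{2l}$-coefficient. For this I would bound the general term: using $|(2\pi n_i+\alpha_i)(2\pi n_j+\alpha_j)|\le|2\pi\boldsymbol n+\boldsymbol\alpha|^2$ and $|2\pi\boldsymbol n+\boldsymbol\alpha|\ge\gamma_0$, the modulus of the $(\boldsymbol n,l)$-term is dominated by $C\,\big(k^2/(\gamma_0^2\min\{\mu,\lambda+2\mu\})\big)^{l}\,|2\pi\boldsymbol n+\boldsymbol\alpha|^{-2(l+1)}$; since $2(l+1)\ge4>d$ for $d\in\{2,3\}$ and all $l\ge1$, the sum $\sum_{\boldsymbol n\in\mathbb Z^d}|2\pi\boldsymbol n+\boldsymbol\alpha|^{-2(l+1)}$ converges (comparison with $\int_{|\boldsymbol\xi|\ge1}|\boldsymbol\xi|^{-2(l+1)}\,\mathrm{d}\boldsymbol\xi$), and summing the bound over $l\ge1$ yields a convergent geometric series whenever $k^2<\gamma_0^2\min\{\mu,\lambda+2\mu\}$. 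Absolute convergence of the double series then simultaneously validates the rearrangement and shows that, for such $k$, $\boldsymbol G^{\boldsymbol\alpha,k}-\boldsymbol G^{\boldsymbol\alpha,0}$ is real-analytic in $\boldsymbol x-\boldsymbol y$ away from $\mathbb Z^d$, so \eqref{G:series} holds pointwise off the lattice and hence as an identity of kernels on $\partial D$. (The individual series for $\boldsymbol G^{\boldsymbol\alpha,0}$ and $\boldsymbol G^{\boldsymbol\alpha,k}$ are only conditionally/distributionally summable when $d\ge2$, so it is really their difference that carries the content — and that is exactly what the estimate above controls.) I expect this uniform-in-$\boldsymbol n$ convergence bookkeeping, together with the partial-fraction step, to be the only delicate parts; everything else is routine.
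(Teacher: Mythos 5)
Your proposal follows essentially the same route as the paper's proof: expand each denominator $\big(\tfrac{k^2}{\mu}-|2\pi\boldsymbol n+\boldsymbol\alpha|^2\big)^{-1}$ and $\big(\tfrac{k^2}{\lambda+2\mu}-|2\pi\boldsymbol n+\boldsymbol\alpha|^2\big)^{-1}$ as a geometric series in $k^2$ (legitimate because $\boldsymbol\alpha\neq 0$), combine them via the Cauchy-product/partial-fraction identity, and collect powers of $k^{2l}$ — which is exactly the "direct calculation" the paper invokes. Your additional bookkeeping (the uniform-in-$\boldsymbol n$ bound using $|2\pi\boldsymbol n+\boldsymbol\alpha|\ge|\boldsymbol\alpha|$ and $2(l+1)>d$ to justify absolute convergence and the rearrangement) only makes explicit what the paper leaves implicit, so the argument is correct and matches the paper's approach.
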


\begin{proof}
Using the identities
\begin{equation*}
\frac{1}{\frac{k^{2}}{\mu}-|2\pi \boldsymbol n+\boldsymbol \alpha|^{2}}=-\sum_{l=0}^{+\infty}\frac{k^{2l}}{\mu^{l}|2\pi\boldsymbol n+\boldsymbol \alpha|^{2(l+1)}}
\end{equation*}
and
\begin{equation*}
\left(\sum_{n=0}^{\infty}a^{n}\right)\left(\sum_{n=0}^{\infty}b^{n}\right)=\sum_{n=0}^{\infty}\sum_{l=0}^{n}a^{l}b^{n-l}=\sum_{n=0}^{\infty}\frac{a^{n+1}-b^{n+1}}{a-b},
\end{equation*}
we derive \eqref{G:series} through direct calculation.
\end{proof}

For $l\geq1$, we define $\boldsymbol G_{l}^{\boldsymbol\alpha}(\boldsymbol x-\boldsymbol y):=(G_{ij;l}^{\boldsymbol\alpha}(\boldsymbol x-\boldsymbol y))^d_{i,j=1}$. Subsequently, we present the asymptotic expansions for the boundary integral operators $\boldsymbol{\mathcal{S}}^{\boldsymbol{\alpha},k}_{D}$ and $(\boldsymbol{\mathcal{K}}^{\boldsymbol{-\alpha},k}_{D})^\ast$.

The following results follow directly from the application of Lemma \ref{le:series}.

\begin{lemm}\label{SK:asy}
For $\boldsymbol\alpha\neq 0$ and $k\rightarrow 0$, the quasi-periodic single-layer potential operator 
\[\boldsymbol{\mathcal{S}}^{\boldsymbol{\alpha},k}_{D}: H^{-\frac{1}{2}}(\partial D)^d\rightarrow \boldsymbol H^{
\frac{1}{2}}_{\alpha, \mathrm {per}}(\partial D)\] and the quasi-periodic Neumann--Poincar\'{e} operator 
\[(\boldsymbol{\mathcal{K}}^{-\boldsymbol\alpha,k}_{D})^{*}:H^{-\frac{1}{2}}(\partial D)^d\rightarrow \boldsymbol H^{-\frac{1}{2}}_{\alpha, \mathrm {per}}(\partial D)\]
 admit the following asymptotic expansions:
\begin{align*}
\boldsymbol{\mathcal{S}}^{\boldsymbol{\alpha},k}_{D}& =\boldsymbol{\mathcal{S}}^{\boldsymbol{\alpha},0}_{D}+k^{2}\boldsymbol{\mathcal{S}}_{D,1}^{\boldsymbol\alpha}+\mathcal{O}(k^4),\\
(\boldsymbol{\mathcal{K}}^{-\boldsymbol{\alpha},k}_{D})^* & =(\boldsymbol{\mathcal{K}}^{-\boldsymbol{\alpha},0}_{D})^*+k^{2}(\boldsymbol{\mathcal{K}}^{-\boldsymbol{\alpha}}_{D,1})^*+\mathcal{O}(k^4),
\end{align*}
where
\begin{align}
\label{S:expansion}\boldsymbol{\mathcal{S}}_{D,1}^{\boldsymbol\alpha}[\boldsymbol\varphi ](\boldsymbol x):&=\int_{\partial D}\boldsymbol G_{1}^{\boldsymbol\alpha}(\boldsymbol x-\boldsymbol y)\boldsymbol\varphi(\boldsymbol y)\mathrm{d}\sigma(\boldsymbol y),\\
(\boldsymbol{\mathcal{K}}_{D,1}^{-\boldsymbol{\alpha}})^{*}[\boldsymbol\varphi](\boldsymbol x):&=\int_{\partial D}\partial_{\boldsymbol{\nu}_{\boldsymbol x}}\boldsymbol G_{1}^{\boldsymbol\alpha}(\boldsymbol x-\boldsymbol y)\boldsymbol\varphi(\boldsymbol y)\mathrm{d}\sigma(\boldsymbol y).\notag
\end{align}
\end{lemm}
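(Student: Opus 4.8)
The plan is to substitute the kernel expansion \eqref{G:series} of Lemma~\ref{le:series} into the definitions of $\boldsymbol{\mathcal{S}}^{\boldsymbol\alpha,k}_{D}$ and $(\boldsymbol{\mathcal{K}}^{-\boldsymbol\alpha,k}_{D})^{*}$, integrate against the density term by term, and then control the resulting power series in $k^{2}$ in the operator norm. Writing $\boldsymbol G^{\boldsymbol\alpha,k}=\boldsymbol G^{\boldsymbol\alpha,0}+\sum_{l\ge1}k^{2l}\boldsymbol G^{\boldsymbol\alpha}_{l}$ and pairing with $\boldsymbol\varphi\in H^{-\frac12}(\partial D)^{d}$ formally yields $\boldsymbol{\mathcal{S}}^{\boldsymbol\alpha,k}_{D}[\boldsymbol\varphi]=\boldsymbol{\mathcal{S}}^{\boldsymbol\alpha,0}_{D}[\boldsymbol\varphi]+\sum_{l\ge1}k^{2l}\boldsymbol{\mathcal{S}}^{\boldsymbol\alpha}_{D,l}[\boldsymbol\varphi]$, where $\boldsymbol{\mathcal{S}}^{\boldsymbol\alpha}_{D,l}$ has kernel $\boldsymbol G^{\boldsymbol\alpha}_{l}$ and the $l=1$ term is \eqref{S:expansion}; truncating after $l=1$ leaves $\sum_{l\ge2}k^{2l}\boldsymbol{\mathcal{S}}^{\boldsymbol\alpha}_{D,l}$, which should be $\mathcal{O}(k^{4})$. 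The conceptual point is that only the leading kernel $\boldsymbol G^{\boldsymbol\alpha,0}$ carries the Calder\'on--Zygmund singularity of the free elastostatic fundamental solution (logarithmic for $d=2$, $|\boldsymbol x|^{-1}$ for $d=3$), so that $\boldsymbol{\mathcal{S}}^{\boldsymbol\alpha,0}_{D}$ is bounded $H^{-\frac12}(\partial D)^{d}\to\boldsymbol H^{\frac12}_{\boldsymbol\alpha,\mathrm{per}}(\partial D)$ by the standard mapping properties of elastic single-layer potentials, whereas for each $l\ge1$ the Fourier coefficients of $\boldsymbol G^{\boldsymbol\alpha}_{l}$ decay at the rate $|2\pi\boldsymbol n+\boldsymbol\alpha|^{-2(l+1)}$, which is summable for $d=2,3$, so $\boldsymbol G^{\boldsymbol\alpha}_{l}$ is a smooth (for large $l$, arbitrarily smooth) kernel and $\boldsymbol{\mathcal{S}}^{\boldsymbol\alpha}_{D,l}$ is a smoothing, hence bounded, operator. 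That the range is $\boldsymbol\alpha$-quasi-periodic is automatic, since each lattice mode $e^{\mathrm{i}(2\pi\boldsymbol n+\boldsymbol\alpha)\cdot\boldsymbol x}$ obeys $\boldsymbol u(\boldsymbol x+\boldsymbol m)=e^{\mathrm{i}\boldsymbol\alpha\cdot\boldsymbol m}\boldsymbol u(\boldsymbol x)$, whence $\boldsymbol G^{\boldsymbol\alpha,k}(\boldsymbol x+\boldsymbol m-\boldsymbol y)=e^{\mathrm{i}\boldsymbol\alpha\cdot\boldsymbol m}\boldsymbol G^{\boldsymbol\alpha,k}(\boldsymbol x-\boldsymbol y)$ and the layer potentials inherit this relation.

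To make the formal identity rigorous I would prove geometric operator-norm bounds for the correction operators: estimating the lattice sums defining $\boldsymbol G^{\boldsymbol\alpha}_{l}$ (and their first surface derivatives, for the $H^{\frac12}$ target) termwise gives, for $d\in\{2,3\}$ and $l\ge1$,
\[
\bigl\|\boldsymbol{\mathcal{S}}^{\boldsymbol\alpha}_{D,l}\bigr\|_{H^{-\frac12}(\partial D)^{d}\to\boldsymbol H^{\frac12}_{\boldsymbol\alpha,\mathrm{per}}(\partial D)}\le C_{0}\,R^{\,l},
\]
with a ratio $R$ depending only on $d$, $D$, the Lam\'e parameters, and on $\rho_{*}:=\min_{\boldsymbol n\in\mathbb Z^{d}}|2\pi\boldsymbol n+\boldsymbol\alpha|$, which is strictly positive precisely because $\boldsymbol\alpha\neq0$; the uniform-in-$l$ bound $\sum_{\boldsymbol n}(\rho_{*}/|2\pi\boldsymbol n+\boldsymbol\alpha|)^{2(l+1)}\le\sum_{\boldsymbol n}(\rho_{*}/|2\pi\boldsymbol n+\boldsymbol\alpha|)^{4}<\infty$ for $l\ge1$ is what fixes the geometric rate. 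Consequently, for $k$ small enough that $k^{2}R<1$,
\[
\bigl\|\boldsymbol{\mathcal{S}}^{\boldsymbol\alpha,k}_{D}-\boldsymbol{\mathcal{S}}^{\boldsymbol\alpha,0}_{D}-k^{2}\boldsymbol{\mathcal{S}}^{\boldsymbol\alpha}_{D,1}\bigr\|\le C_{0}\sum_{l\ge2}R^{\,l}k^{2l}=\mathcal{O}(k^{4}),
\]
which is the claimed expansion; the same absolute convergence also legitimizes the termwise integration used in the first step.

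For the Neumann--Poincar\'e operator I would differentiate the kernel expansion in the conormal variable $\partial_{\boldsymbol\nu_{\boldsymbol x}}$ inside the principal value. The leading piece $\partial_{\boldsymbol\nu_{\boldsymbol x}}\boldsymbol G^{\boldsymbol\alpha,0}$ reproduces the classical elastostatic Neumann--Poincar\'e kernel up to a remainder that is smooth near the diagonal (the difference $\boldsymbol G^{\boldsymbol\alpha,0}-\boldsymbol G^{0}$ being smooth there, as already noted before \eqref{quasi-jump}), so $(\boldsymbol{\mathcal{K}}^{-\boldsymbol\alpha,0}_{D})^{*}$ is the bounded singular integral operator on $H^{-\frac12}(\partial D)^{d}$ entering the jump relation \eqref{quasi-jump}, and its range is $\boldsymbol\alpha$-quasi-periodic by the same argument as above; the higher kernels $\partial_{\boldsymbol\nu_{\boldsymbol x}}\boldsymbol G^{\boldsymbol\alpha}_{l}$ lose exactly one order of regularity relative to $\boldsymbol G^{\boldsymbol\alpha}_{l}$, remain continuous (and, for $l\ge2$, smooth) kernels, and satisfy geometric bounds $\|(\boldsymbol{\mathcal{K}}^{-\boldsymbol\alpha}_{D,l})^{*}\|\le C_{1}R^{\,l}$, so the geometric-series argument again produces $(\boldsymbol{\mathcal{K}}^{-\boldsymbol\alpha,k}_{D})^{*}=(\boldsymbol{\mathcal{K}}^{-\boldsymbol\alpha,0}_{D})^{*}+k^{2}(\boldsymbol{\mathcal{K}}^{-\boldsymbol\alpha}_{D,1})^{*}+\mathcal{O}(k^{4})$ in $\mathcal{L}(H^{-\frac12}(\partial D)^{d},\boldsymbol H^{-\frac12}_{\boldsymbol\alpha,\mathrm{per}}(\partial D))$. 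I expect the genuine difficulty to lie in this last step: justifying the interchange of $\partial_{\boldsymbol\nu_{\boldsymbol x}}$, the Cauchy principal value, and the infinite sum near the diagonal, together with the regularity bookkeeping in $d=3$, where the first correction $\boldsymbol G^{\boldsymbol\alpha}_{1}$ is only finitely smooth, so the mapping estimates must be read off from explicit bounds on the lattice sums and the local $|\boldsymbol x-\boldsymbol y|$-behaviour rather than from a blanket ``smooth kernel'' argument. Once the geometric operator-norm bounds are secured, collecting the terms is routine.
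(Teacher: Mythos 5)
Your proposal is correct and follows essentially the same route as the paper, which simply asserts that the lemma ``follows directly from Lemma \ref{le:series}'' — i.e., substitute the kernel series into the layer potentials and truncate. The only difference is that you supply the operator-norm bookkeeping (geometric bounds in $k^{2}$ via $\min_{\boldsymbol n}|2\pi\boldsymbol n+\boldsymbol\alpha|>0$, and the borderline regularity of $\boldsymbol G^{\boldsymbol\alpha}_{1}$ in $d=3$) that the paper leaves implicit.
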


\begin{lemm}\label{le:IS-series}
For $\boldsymbol\alpha\neq 0$ and $k\rightarrow0$,  the operator $\boldsymbol{\mathcal{S}}_D^{\boldsymbol\alpha,k}$ is invertible, and its inverse $(\boldsymbol{\mathcal{S}}_D^{\boldsymbol\alpha,k})^{-1} :\boldsymbol H^{\frac{1}{2}}_{\alpha, \mathrm {per}}(\partial D)\rightarrow H^{-\frac{1}{2}}(\partial D)^d$ admits the following asymptotic expansion:
\begin{align*}
(\boldsymbol{\mathcal{S}}_D^{\boldsymbol\alpha,k})^{-1}
=(\boldsymbol{\mathcal{S}}_{D}^{\boldsymbol\alpha,0})^{-1}-k^{2}(\boldsymbol{\mathcal{S}}_{D}^{\boldsymbol\alpha,0})^{-1}\boldsymbol{\mathcal{S}}_{D,1}^{\boldsymbol\alpha}(\boldsymbol{\mathcal{S}}_{D}^{\boldsymbol\alpha,0})^{-1}+\mathcal{O}(k^{4}),
\end{align*}
where $\boldsymbol{\mathcal{S}}_{D,1}^{\boldsymbol\alpha}$ is given in \eqref{S:expansion}.
\end{lemm}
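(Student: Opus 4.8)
The plan is to establish invertibility first and then bootstrap the Neumann series expansion. First I would verify that the leading operator $\boldsymbol{\mathcal{S}}_D^{\boldsymbol\alpha,0}: H^{-\frac{1}{2}}(\partial D)^d \to \boldsymbol H^{\frac{1}{2}}_{\alpha,\mathrm{per}}(\partial D)$ is invertible. Since $G_{ij}^{\boldsymbol\alpha,0}$ differs from the free-space fundamental solution $G_{ij}^0$ only by a smooth (real-analytic) kernel — the difference $\boldsymbol G^{\boldsymbol\alpha,0} - \boldsymbol G^0 = \sum_{\boldsymbol n\in\mathbb Z^d\setminus\{0\}}\boldsymbol G^0(\cdot-\boldsymbol n)e^{\mathrm{i}\boldsymbol n\cdot\boldsymbol\alpha}$ converges to a $C^\infty$ function on $\partial D\times\partial D$ because $\overline D\subset Y$ keeps arguments bounded away from the lattice singularities — the operator $\boldsymbol{\mathcal{S}}_D^{\boldsymbol\alpha,0}$ is a compact perturbation of the (invertible, for $d=3$; invertible under the standard log-capacity normalization for $d=2$) free-space single-layer operator $\boldsymbol{\mathcal{S}}_D^0$. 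Invertibility then follows from the Fredholm alternative together with injectivity, which in turn rests on the fact (already invoked in the excerpt via the spectral theory of periodic elliptic operators and used implicitly in Lemma~\ref{SK:asy}) that for $\boldsymbol\alpha\neq 0$ the static $\boldsymbol\alpha$-quasi-periodic Lamé system has no nontrivial solutions — indeed the quadratic form associated with $\mathcal{L}_{\lambda,\mu}$ is coercive on $\boldsymbol H^1_{\boldsymbol\alpha,\mathrm{per}}$ when $\boldsymbol\alpha\neq0$ under the convexity conditions \eqref{convexity}, so that a single-layer potential with $\boldsymbol{\mathcal{S}}_D^{\boldsymbol\alpha,0}[\boldsymbol\varphi]=0$ on $\partial D$ must vanish identically in $Y\setminus\overline D$ and in $D$, whence $\boldsymbol\varphi=0$ by the jump of the conormal derivative.

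Next I would transfer invertibility to $\boldsymbol{\mathcal{S}}_D^{\boldsymbol\alpha,k}$ for small $k$. By Lemma~\ref{SK:asy} we may write $\boldsymbol{\mathcal{S}}_D^{\boldsymbol\alpha,k} = \boldsymbol{\mathcal{S}}_D^{\boldsymbol\alpha,0}\bigl(\boldsymbol{\mathcal{I}} + k^2(\boldsymbol{\mathcal{S}}_D^{\boldsymbol\alpha,0})^{-1}\boldsymbol{\mathcal{S}}_{D,1}^{\boldsymbol\alpha} + \mathcal{O}(k^4)\bigr)$, and the operator in parentheses is of the form $\boldsymbol{\mathcal{I}} + k^2\boldsymbol{\mathcal{T}}(k)$ with $\boldsymbol{\mathcal{T}}(k)$ uniformly bounded on $H^{-\frac12}(\partial D)^d$ as $k\to 0$. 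Hence for $|k|$ sufficiently small the Neumann series $\sum_{m\ge0}(-k^2)^m\boldsymbol{\mathcal{T}}(k)^m$ converges in operator norm, giving invertibility of $\boldsymbol{\mathcal{S}}_D^{\boldsymbol\alpha,k}$ and the formula
\[
(\boldsymbol{\mathcal{S}}_D^{\boldsymbol\alpha,k})^{-1} = \bigl(\boldsymbol{\mathcal{I}} + k^2(\boldsymbol{\mathcal{S}}_D^{\boldsymbol\alpha,0})^{-1}\boldsymbol{\mathcal{S}}_{D,1}^{\boldsymbol\alpha} + \mathcal{O}(k^4)\bigr)^{-1}(\boldsymbol{\mathcal{S}}_D^{\boldsymbol\alpha,0})^{-1}.
\]
Expanding the inverse in parentheses to first order, $(\boldsymbol{\mathcal{I}} + k^2(\boldsymbol{\mathcal{S}}_D^{\boldsymbol\alpha,0})^{-1}\boldsymbol{\mathcal{S}}_{D,1}^{\boldsymbol\alpha})^{-1} = \boldsymbol{\mathcal{I}} - k^2(\boldsymbol{\mathcal{S}}_D^{\boldsymbol\alpha,0})^{-1}\boldsymbol{\mathcal{S}}_{D,1}^{\boldsymbol\alpha} + \mathcal{O}(k^4)$, and multiplying on the right by $(\boldsymbol{\mathcal{S}}_D^{\boldsymbol\alpha,0})^{-1}$ yields exactly the claimed expansion
\[
(\boldsymbol{\mathcal{S}}_D^{\boldsymbol\alpha,k})^{-1} = (\boldsymbol{\mathcal{S}}_D^{\boldsymbol\alpha,0})^{-1} - k^2(\boldsymbol{\mathcal{S}}_D^{\boldsymbol\alpha,0})^{-1}\boldsymbol{\mathcal{S}}_{D,1}^{\boldsymbol\alpha}(\boldsymbol{\mathcal{S}}_D^{\boldsymbol\alpha,0})^{-1} + \mathcal{O}(k^4).
\]
One should also note that the error term in Lemma~\ref{SK:asy} is $\mathcal{O}(k^4)$ rather than $\mathcal{O}(k^3)$ precisely because the series \eqref{G:series} in Lemma~\ref{le:series} proceeds in even powers $k^{2l}$, so all intermediate remainders are controlled at order $k^4$.

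The main obstacle is the invertibility of the static operator $\boldsymbol{\mathcal{S}}_D^{\boldsymbol\alpha,0}$ — more precisely, its injectivity, which is where the hypothesis $\boldsymbol\alpha\neq0$ is essential and where the periodic setting genuinely differs from the single-resonator case. I would handle this by the energy argument sketched above: given $\boldsymbol\varphi$ with $\boldsymbol{\mathcal{S}}_D^{\boldsymbol\alpha,0}[\boldsymbol\varphi]=0$, the function $\boldsymbol u := \boldsymbol{\widetilde{\mathcal{S}}}_D^{\boldsymbol\alpha,0}[\boldsymbol\varphi]$ is $\boldsymbol\alpha$-quasi-periodic, solves $\mathcal{L}_{\lambda,\mu}\boldsymbol u = 0$ in both $D$ and $Y\setminus\overline D$, and has vanishing Dirichlet trace on $\partial D$; testing the equation against $\boldsymbol u$ over $Y\setminus\overline D$ (the quasi-periodic boundary terms on $\partial Y$ cancel) and over $D$, and using Korn's inequality together with \eqref{convexity} and $\boldsymbol\alpha\neq0$ to get coercivity of the elastic energy on $\boldsymbol H^1_{\boldsymbol\alpha,\mathrm{per}}(Y\setminus\overline D)$, forces $\boldsymbol u\equiv0$ on both sides; then \eqref{quasi-jump} with $k=0$ gives $\boldsymbol\varphi = \partial_{\boldsymbol\nu}\boldsymbol u|_+ - \partial_{\boldsymbol\nu}\boldsymbol u|_- = 0$. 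A minor technical point for $d=2$ is the usual logarithmic behavior of the single-layer operator, which I expect is already accounted for by the normalization built into the definition of $\boldsymbol H^{\frac12}_{\alpha,\mathrm{per}}(\partial D)$ and the mapping properties stated in Lemma~\ref{SK:asy}; if not, one restricts to the appropriate closed subspace. Everything else — the convergence of the Neumann series and the algebraic rearrangement into the stated first-order expansion — is routine.
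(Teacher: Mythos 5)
Your proposal is correct and follows essentially the same route as the paper's proof: invertibility of the static operator via the Fredholm-index-zero property combined with injectivity (the single-layer potential with vanishing trace solves the quasi-periodic Dirichlet problems in $Y\setminus\overline{D}$ and $D$, hence vanishes, and the jump relation \eqref{quasi-jump} gives $\boldsymbol\varphi=0$), followed by the factorization through $\boldsymbol{\mathcal{S}}_D^{\boldsymbol\alpha,0}$ and the Neumann series; the only difference is that you derive the Fredholm property explicitly as a compact (smooth-kernel) perturbation of $\boldsymbol{\mathcal{S}}_D^{0}$, whereas the paper cites it directly. A minor cosmetic point: for $k=0$ the lattice sum $\sum_{\boldsymbol n\neq 0}\boldsymbol G^{0}(\cdot-\boldsymbol n)e^{\mathrm{i}\boldsymbol n\cdot\boldsymbol\alpha}$ is not absolutely convergent, but the only fact you actually use, namely that $\boldsymbol G^{\boldsymbol\alpha,0}-\boldsymbol G^{0}$ is smooth near the diagonal, is standard and suffices.
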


\begin{proof}
Since $\boldsymbol{\mathcal{S}}^{\boldsymbol{\alpha},0}_{D}$ is a Fredholm operator with index zero \cite{Layer009}, it suffices to demonstrate its injectivity, i.e., $\mathrm{Ker}\{\boldsymbol{\mathcal{S}}^{\boldsymbol{\alpha},0}_{D}\}={0}$. Let $\boldsymbol{\varphi}\in H^{-\frac{1}{2}}(\partial D)^d$ satisfy $\boldsymbol{\mathcal{S}}^{\boldsymbol{\alpha},0}_{D}[\boldsymbol\varphi]=0$ on $\partial D.$ Define $\boldsymbol u=\boldsymbol{\widetilde{\mathcal{S}}}^{\boldsymbol{\alpha},0}_{D}[\boldsymbol\varphi](\boldsymbol x)$, which satisfies the boundary value problem
\begin{align*}
\left\{ \begin{aligned}
&\mathcal{L}_{\lambda,\mu}\boldsymbol{u}=0&&\text{in }Y\backslash\overline{D},\\
&\boldsymbol{u}|_+=0&&\text{on } \partial D,\\
&e^{-\mathrm{i}\boldsymbol\alpha\cdot\boldsymbol x}\boldsymbol{u}\text{ is periodic}.
\end{aligned}\right.
\end{align*}
From the well-posedness of this problem, we conclude that $\boldsymbol{u}=0$ in $Y\backslash \overline{D}$. Similarly, by applying the same argument to the interior domain $D$, we deduce that $\boldsymbol u=0$ in $D$. It follows from the jump relations for the single-layer potential that $\boldsymbol \varphi=\partial_\nu \boldsymbol u|_+-\partial_\nu \boldsymbol u|_-=0.$ Thus, 
$\boldsymbol {\mathcal {S}}^{\boldsymbol \alpha, k}_{D}$
is injective, which, given its Fredholm property, implies its invertibility. Consequently, 
$\boldsymbol {\mathcal {S}}^{\boldsymbol \alpha, k}_{D}$
remains invertible for sufficiently small $k$.

Furthermore, it is clear to note from Lemma \ref{le:series} that 
\[
\boldsymbol{\mathcal{S}}_D^{\boldsymbol\alpha,k}=(\boldsymbol{\mathcal{S}}_D^{\boldsymbol\alpha,0})\left(\boldsymbol{\mathcal{I}}+\sum^\infty_{l=0}k^{2l}(\boldsymbol{\mathcal{S}}_D^{\boldsymbol\alpha,0})^{-1}\boldsymbol{\mathcal{S}}_{D,l}^{\boldsymbol\alpha}\right),
\]
where
\begin{align*}
\boldsymbol{\mathcal{S}}_{D,l}^{\boldsymbol\alpha}[\boldsymbol\varphi ](\boldsymbol x):=\int_{\partial D}\boldsymbol G_{l}^{\boldsymbol\alpha}(\boldsymbol x-\boldsymbol y)\boldsymbol\varphi(\boldsymbol y)\mathrm{d}\sigma(\boldsymbol y).
\end{align*}
By the  Neumann series, we obtain
\begin{align*}
(\boldsymbol{\mathcal{S}}_D^{\boldsymbol\alpha,k})^{-1}&=\left(\boldsymbol{\mathcal{I}}+\sum_{l=1}^{\infty}k^{2l}(\boldsymbol{\mathcal{S}}_D^{\boldsymbol\alpha,0})^{-1}\boldsymbol{\mathcal{S}}_{D,l}^{\boldsymbol\alpha}\right)^{-1}(\boldsymbol{\mathcal{S}}_{D}^{\boldsymbol\alpha,0})^{-1}\\
&=\sum_{j=0}^{+\infty}\left(-\sum_{l=1}^{\infty}k^{2l}(\boldsymbol{\mathcal{S}}_{D}^{\boldsymbol\alpha,0})^{-1}\boldsymbol{\mathcal{S}}_{D,l}^{\boldsymbol\alpha}\right)^{j}(\boldsymbol{\mathcal{S}}_{D}^{\boldsymbol\alpha,0})^{-1} \\
&=(\boldsymbol{\mathcal{S}}_{D}^{\boldsymbol\alpha,0})^{-1}-\left(\sum_{l=1}^{\infty}k^{21}(\boldsymbol{\mathcal{S}}_{D}^{\boldsymbol\alpha,0})^{-1}\boldsymbol{\mathcal{S}}_{D,1}^{\boldsymbol\alpha}\right)(\boldsymbol{\mathcal{S}}_{D}^{\boldsymbol\alpha,0})^{-1}
 +\sum_{j=2}^{+\infty}\left(\sum_{l=1}^{\infty}k^{2l}(\boldsymbol{\mathcal{S}}_{D}^{\boldsymbol \alpha,0})^{-1}\boldsymbol{\mathcal{S}}_{D,l}^{\boldsymbol\alpha}\right)^{j}(\boldsymbol{\mathcal{S}}_{D}^{\boldsymbol\alpha,0})^{-1}\\
&=(\boldsymbol{\mathcal{S}}_{D}^{\boldsymbol\alpha,0})^{-1}-k^{2}(\boldsymbol{\mathcal{S}}_{D}^{\boldsymbol\alpha,0})^{-1}\boldsymbol{\mathcal{S}}_{D,1}^{\boldsymbol\alpha}(\boldsymbol{\mathcal{S}}_{D}^{\boldsymbol\alpha,0})^{-1}+\mathcal{O}(k^{4}),
\end{align*}
which completes the proof.
\end{proof}

\begin{lemm}\label{Y0}
For $\boldsymbol \alpha$-quasi-periodic functions $\boldsymbol u$ and $\boldsymbol v$, the following identity holds:
\begin{align*}
\int_{\partial Y}\partial_{\boldsymbol \gamma} \boldsymbol u\cdot \overline{\boldsymbol v}\mathrm{d}\sigma(\boldsymbol x)=0,
\end{align*}
where the operator $\partial_{\boldsymbol \gamma} \boldsymbol u$ is defined as
\[
\partial_{\boldsymbol \gamma} \boldsymbol u:=\lambda(\nabla \cdot \boldsymbol{u})\boldsymbol{\gamma}+\mu(\nabla \boldsymbol{u}+\nabla \boldsymbol{u}^\top)\boldsymbol{\gamma},
\]
and $\boldsymbol \gamma=(\gamma_1,\cdots,\gamma_d)$ denotes the outward unit normal to $\partial Y$.
\end{lemm}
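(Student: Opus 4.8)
The plan is to exploit the periodic geometry of $Y=[0,1]^d$ together with the $\boldsymbol\alpha$-quasi-periodicity of $\boldsymbol u$ and $\boldsymbol v$. The boundary $\partial Y$ consists of $d$ pairs of opposite faces; on each pair, the outward normals $\boldsymbol\gamma$ point in opposite directions, say $+\boldsymbol e_j$ on the face $\{x_j=1\}$ and $-\boldsymbol e_j$ on the face $\{x_j=0\}$. The first step is to write $\int_{\partial Y}\partial_{\boldsymbol\gamma}\boldsymbol u\cdot\overline{\boldsymbol v}\,\mathrm d\sigma$ as a sum over these $d$ pairs and to parametrize each pair by the same coordinates on the $(d-1)$-dimensional face (for the $j$-th pair, the variables $(x_1,\dots,\widehat{x_j},\dots,x_d)\in[0,1]^{d-1}$).

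Next I would use quasi-periodicity: for $\boldsymbol x$ on the face $\{x_j=0\}$ and $\boldsymbol x+\boldsymbol e_j$ the corresponding point on $\{x_j=1\}$, we have $\boldsymbol u(\boldsymbol x+\boldsymbol e_j)=e^{\mathrm i\boldsymbol\alpha\cdot\boldsymbol e_j}\boldsymbol u(\boldsymbol x)=e^{\mathrm i\alpha_j}\boldsymbol u(\boldsymbol x)$, and likewise for $\boldsymbol v$, hence $\overline{\boldsymbol v(\boldsymbol x+\boldsymbol e_j)}=e^{-\mathrm i\alpha_j}\overline{\boldsymbol v(\boldsymbol x)}$. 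Moreover, since $e^{-\mathrm i\boldsymbol\alpha\cdot\boldsymbol x}\boldsymbol u$ is genuinely periodic, its first derivatives — and therefore $\nabla\boldsymbol u$ translated by $\boldsymbol e_j$ picks up exactly the same factor $e^{\mathrm i\alpha_j}$ — so the traction operator satisfies $\partial_{\boldsymbol\gamma}\boldsymbol u(\boldsymbol x+\boldsymbol e_j)=e^{\mathrm i\alpha_j}\bigl(\partial_{\boldsymbol\gamma}\boldsymbol u\bigr)(\boldsymbol x)$ as a vector field, up to the sign flip coming from $\boldsymbol\gamma$ reversing orientation between the two faces. Putting these together, the integrand on the face $\{x_j=1\}$ equals $e^{\mathrm i\alpha_j}\,(\text{value at }\boldsymbol x)\cdot e^{-\mathrm i\alpha_j}\overline{(\text{value at }\boldsymbol x)}=(\text{same quantity as on }\{x_j=0\})$, but with the opposite outward normal, so the two face-integrals in the $j$-th pair cancel exactly. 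Summing over $j=1,\dots,d$ gives zero.

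The one point that requires a little care — and which I expect to be the main (modest) obstacle — is making the derivative/traction transformation rigorous at the level of traces: one must justify that $\partial_{\boldsymbol\gamma}\boldsymbol u$, which involves $\nabla\boldsymbol u$, inherits the quasi-periodicity cleanly, i.e. that differentiating the relation $\boldsymbol u(\boldsymbol x+\boldsymbol n)=e^{\mathrm i\boldsymbol\alpha\cdot\boldsymbol n}\boldsymbol u(\boldsymbol x)$ is legitimate in the appropriate Sobolev sense and that the resulting identity of $\nabla\boldsymbol u$ on opposite faces holds as traces. This follows from writing $\boldsymbol u=e^{\mathrm i\boldsymbol\alpha\cdot\boldsymbol x}\boldsymbol w$ with $\boldsymbol w$ periodic and sufficiently regular near $\partial Y$ (e.g. $\boldsymbol w\in H^1_{\mathrm{loc}}$ with the equation satisfied in $Y\setminus\overline D$, so $\boldsymbol u$ is smooth up to $\partial Y$ by interior elliptic regularity away from $\partial D$), and noting that $\nabla\boldsymbol u=e^{\mathrm i\boldsymbol\alpha\cdot\boldsymbol x}(\nabla\boldsymbol w+\mathrm i\boldsymbol\alpha\otimes\boldsymbol w)$ with the bracket periodic; the factors $\boldsymbol\gamma^\top$, $\lambda$, $\mu$ in the definition of $\partial_{\boldsymbol\gamma}$ are themselves face-independent (up to the sign of $\boldsymbol\gamma$), so the conclusion is immediate once this identification is in place. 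I would state this regularity remark briefly and then present the cancellation computation on a representative pair of faces, indicating that the remaining $d-1$ pairs are handled identically.
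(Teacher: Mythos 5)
Your proposal is correct and rests on the same mechanism as the paper's proof: pairing opposite faces of $\partial Y$, using the $\boldsymbol\alpha$-quasi-periodicity of $\boldsymbol u$, $\nabla\boldsymbol u$ and $\boldsymbol v$ so that the conjugate phase factors cancel in the product, and invoking the opposite orientation of $\boldsymbol\gamma$ on opposite faces to obtain exact cancellation. The paper merely presents the same argument in a slightly different dressing (factoring out $e^{-\mathrm i\boldsymbol\alpha\cdot\boldsymbol x}$ first and exhibiting the resulting extra boundary term via the matrix $\mathbf{\Pi}$ before noting the face-wise cancellation), so your direct translation-by-$\boldsymbol e_j$ computation is an equivalent, if anything cleaner, rendition.
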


\begin{proof}
A straightforward calculation yields
\begin{align}\label{Y0e}
\int_{\partial Y}\partial_{\boldsymbol \gamma} \boldsymbol u\cdot\overline{\boldsymbol v}\mathrm{d}\sigma(\boldsymbol x) &= \int_{\partial Y}\partial_{\boldsymbol \gamma}\left(e^{-\mathrm{i}\boldsymbol \alpha\cdot \boldsymbol x} \boldsymbol u\right)\cdot\overline{e^{-\mathrm{i}\boldsymbol\alpha\cdot \boldsymbol x} \boldsymbol v}\mathrm{d}\sigma(\boldsymbol x)\nonumber\\
&\quad + \mathrm{i}\int_{\partial Y}\left(\lambda\boldsymbol \alpha\cdot\left(e^{-\mathrm{i}\boldsymbol\alpha\cdot\boldsymbol x} \boldsymbol u\right) \boldsymbol \gamma+ \mu\mathbf{\Pi}\left(e^{-\mathrm{i}\boldsymbol\alpha\cdot \boldsymbol x} \boldsymbol u\right)\right)\cdot\overline{e^{-\mathrm{i}\boldsymbol\alpha\cdot \boldsymbol x} \boldsymbol v}\mathrm{d}\sigma(\boldsymbol x),
\end{align}
where the $d\times d$ matrix $\mathbf{\Pi}$ is given by 
\begin{align*}
\mathbf{\Pi}=\left\{ \begin{aligned}&\left(\begin{matrix}2\alpha_{1} \gamma_{1}+\alpha_{2} \gamma_{2} & \alpha_{1} \gamma_{2} \\ \alpha_{2} \gamma_{1} & \alpha_{1} \gamma_{1}+2\alpha_{2} \gamma_{2}\end{matrix}\right),&& d=2,\\
&\left(\begin{matrix}2\alpha_{1} \gamma_{1}+\alpha_{2} \gamma_{2}+\alpha_{3} \gamma_{3} & \alpha_{1} \gamma_{2} & \alpha_{1} \gamma_{3}\\ \alpha_{2} \gamma_{1} & \alpha_{1} \gamma_{1}+2\alpha_{2} \gamma_{2}+\alpha_{3} \gamma_{3}& \alpha_{2} \gamma_{3}\\
\alpha_{3} \gamma_{1}& \alpha_{3} \gamma_{2} & \alpha_{1} \gamma_{1}+\alpha_{2} \gamma_{2}+2\alpha_{3} \gamma_{3}\end{matrix}\right),&& d=3.
\end{aligned}\right.
\end{align*}
Since the outward unit normal vectors $\boldsymbol \gamma$ on opposite sides of $\partial Y$ are parallel but oriented in opposite directions, the integrands on the right hand side of \eqref{Y0e} exhibit equal magnitudes but opposite signs. Consequently, the integral over $\partial Y$ vanishes.
\end{proof}


\section{Subwavelength  bandgap}\label{sec:3}

In this section, we introduce the quasi-periodic DtN map, which reformulates the original elastic scattering problem in $Y$ as a Neumann-type problem in $D$. By introducing an auxiliary sesquilinear form, we establish an equivalent condition for the emergence of subwavelength resonances, i.e., the determinant of a $d\times d$ matrix vanishes. Through asymptotic analysis, we derive the asymptotic expansions of the resonant frequencies and the corresponding nontrivial solutions. Finally, we demonstrate the existence of a subwavelength bandgap. Unless explicitly stated otherwise, all results in this section are presented under the assumption that $\boldsymbol \alpha\neq 0$.

\subsection{Characterization based on quasi-periodic DtN map}

In this subsection, we define the quasi-periodic Dirichlet-to-Neumann (DtN) map using the quasi-periodic solution to the exterior problem \cite{zhen2012bandgap,Layer009} and subsequently derive its asymptotic expansion with respect to the wavenumber $k$ in the subwavelength regime. This formulation allows the Lam\'{e} system \eqref{Lame}--\eqref{TC} to be characterized as a boundary value problem in $D$.

\begin{lemm}\label{exter:real}
Let $k\in\mathbb{C}$ with $k\rightarrow 0$. Consider the Lam\'{e} system
\begin{align}\label{exter:Dirichlet}
\left\{ \begin{aligned}
&\mathcal{L}_{\lambda,\mu}\boldsymbol{u}+k^2\boldsymbol{u}=0&&\text{in }Y\backslash\overline{D},\\
&\boldsymbol{u}=0&&\text{on } \partial D,\\
&e^{-\mathrm{i}\boldsymbol\alpha\cdot\boldsymbol x}\boldsymbol{u}\text{ is periodic}. 
\end{aligned}\right.
\end{align}
Then, the system admits a unique solution.
\end{lemm}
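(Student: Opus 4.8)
The plan is to recast \eqref{exter:Dirichlet} as a coercive variational problem on the closed subspace
\[
\boldsymbol V:=\bigl\{\boldsymbol v\in\boldsymbol H^1_{\boldsymbol{\alpha},\mathrm{per}}(Y\backslash\overline D):\ \boldsymbol v|_{\partial D}=\boldsymbol 0\bigr\}
\]
and to invoke the Lax--Milgram lemma. First I would test the equation against $\overline{\boldsymbol v}$ with $\boldsymbol v\in\boldsymbol V$ and integrate by parts over $Y\backslash\overline D$, whose boundary is $\partial D\cup\partial Y$: the traction term on $\partial D$ drops out because $\boldsymbol v|_{\partial D}=\boldsymbol 0$, and the traction term on $\partial Y$, which is $\int_{\partial Y}\partial_{\boldsymbol\gamma}\boldsymbol u\cdot\overline{\boldsymbol v}\,\mathrm{d}\sigma$, vanishes by Lemma~\ref{Y0} since $\boldsymbol u$ and $\boldsymbol v$ are both $\boldsymbol\alpha$-quasi-periodic. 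Writing $\boldsymbol e(\boldsymbol u):=\tfrac12(\nabla\boldsymbol u+\nabla\boldsymbol u^\top)$, this yields the weak formulation: find $\boldsymbol u\in\boldsymbol V$ such that $b_k(\boldsymbol u,\boldsymbol v)=0$ for all $\boldsymbol v\in\boldsymbol V$, where
\begin{align*}
b_k(\boldsymbol u,\boldsymbol v):=\int_{Y\backslash\overline D}\Bigl(2\mu\,\boldsymbol e(\boldsymbol u):\overline{\boldsymbol e(\boldsymbol v)}+\lambda(\nabla\cdot\boldsymbol u)\overline{(\nabla\cdot\boldsymbol v)}\Bigr)\mathrm{d}x-k^2\int_{Y\backslash\overline D}\boldsymbol u\cdot\overline{\boldsymbol v}\,\mathrm{d}x.
\end{align*}
Conversely, a solution of this weak problem lies in $\boldsymbol H^2_{\mathrm{loc}}$ by interior and boundary elliptic regularity and solves \eqref{exter:Dirichlet} classically, so the two formulations are equivalent; I would only remark on this routine point.

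The crux is a Korn--Poincar\'e inequality on $\boldsymbol V$: there is a constant $c_0>0$, depending only on $D$, $Y$ and $\boldsymbol\alpha$, such that $\|\boldsymbol e(\boldsymbol u)\|_{L^2(Y\backslash\overline D)}\ge c_0\|\boldsymbol u\|_{H^1(Y\backslash\overline D)}$ for all $\boldsymbol u\in\boldsymbol V$. I would prove this by the standard compactness argument: Korn's second inequality gives $\|\boldsymbol u\|_{H^1}\le C\bigl(\|\boldsymbol u\|_{L^2}+\|\boldsymbol e(\boldsymbol u)\|_{L^2}\bigr)$; were the inequality false there would be $\boldsymbol u_n\in\boldsymbol V$ with $\|\boldsymbol u_n\|_{H^1}=1$ and $\boldsymbol e(\boldsymbol u_n)\to\boldsymbol 0$ in $L^2$, and then Rellich compactness on the bounded Lipschitz domain $Y\backslash\overline D$ together with Korn's inequality would force $\boldsymbol u_n\to\boldsymbol u$ in $H^1$ with $\boldsymbol u\in\boldsymbol V$ and $\boldsymbol e(\boldsymbol u)=\boldsymbol 0$, i.e.\ $\boldsymbol u$ a rigid-body motion; but a rigid-body motion that is $\boldsymbol\alpha$-quasi-periodic with $\boldsymbol\alpha\neq0$ (equivalently, one vanishing on $\partial D$) must vanish identically, contradicting $\|\boldsymbol u\|_{H^1}=1$. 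Combining this with the pointwise bound $2\mu|\boldsymbol e|^2+\lambda|\operatorname{tr}\boldsymbol e|^2=2\mu|\boldsymbol e^{D}|^2+\tfrac{2\mu+d\lambda}{d}|\operatorname{tr}\boldsymbol e|^2\ge c_1|\boldsymbol e|^2$, valid under \eqref{convexity} with $\boldsymbol e^{D}$ the trace-free part of $\boldsymbol e$, shows that the static part of the form satisfies $b_0(\boldsymbol u,\boldsymbol u)\ge c_1c_0^2\|\boldsymbol u\|_{H^1}^2$.

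Finally I would absorb the zeroth-order term. Since $b_0(\boldsymbol u,\boldsymbol u)$ is real and nonnegative and $\bigl|k^2\int_{Y\backslash\overline D}|\boldsymbol u|^2\,\mathrm{d}x\bigr|\le|k|^2\|\boldsymbol u\|_{H^1}^2$, one obtains $\Re\,b_k(\boldsymbol u,\boldsymbol u)\ge(c_1c_0^2-|k|^2)\|\boldsymbol u\|_{H^1}^2$, so for every $k\in\mathbb C$ with $|k|^2<c_1c_0^2$ — in particular for $k\to0$ — the sesquilinear form $b_k$ is bounded and coercive on $\boldsymbol V$. The Lax--Milgram lemma then gives existence and uniqueness of $\boldsymbol u\in\boldsymbol V$; applied to the homogeneous data of \eqref{exter:Dirichlet} this forces $\boldsymbol u=\boldsymbol 0$ to be the only solution.

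I expect the Korn--Poincar\'e step to be the main obstacle: one must run the compactness argument inside the quasi-periodic space $\boldsymbol H^1_{\boldsymbol{\alpha},\mathrm{per}}(Y\backslash\overline D)$, verify that it is a closed subspace of $H^1(Y\backslash\overline D)^d$ to which Rellich compactness applies, and check carefully that no nonzero rigid-body motion is admissible. By comparison, the complex value of $k$ is only a minor wrinkle, handled via $\Re(k^2)\le|k|^2$, and the equivalence between the weak and classical formulations is standard elliptic regularity.
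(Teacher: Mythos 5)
Your proposal is correct and follows essentially the same route as the paper's (very terse) proof: a Korn--Poincar\'e inequality on the quasi-periodic space with vanishing trace on $\partial D$ yields coercivity of the static form, and Lax--Milgram gives well-posedness, with the $k^2$ term handled because $|k|$ is small. The only cosmetic difference is that you absorb the zeroth-order term directly into the coercivity constant via $|k|^2<c_1c_0^2$, whereas the paper invokes ``perturbation theory'' for the same purpose; your fleshed-out compactness argument (no nonzero rigid motion vanishes on $\partial D$) and the use of Lemma~\ref{Y0} to kill the $\partial Y$ boundary term are exactly the details the paper leaves implicit.
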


\begin{proof}
By combining  Korn's inequality (see Theorem 2.2 in \cite{Giovann2025}) with the Poincar\'{e} inequality, we employ  the  Lax--Milgram theorem and perturbation theory to establish the existence and uniqueness of the weak solution to \eqref{exter:Dirichlet}.
\end{proof}

\begin{defi}\label{DtN:def}
Let $k\in\mathbb{C}$ with $k\rightarrow 0$. The quasi-periodic DtN map, denoted by $\boldsymbol{\mathcal{M}}^{\boldsymbol \alpha,k}:\boldsymbol H^{-\frac{1}{2}}_{\alpha, \mathrm {per}}(\partial D)\rightarrow \boldsymbol H^{-\frac{1}{2}}_{\alpha, \mathrm {per}}(\partial D)$, is defined as
\begin{align*}
\boldsymbol{\mathcal{M}}^{\boldsymbol\alpha,k}[\boldsymbol{f}]:=\partial_{\boldsymbol{\nu}}\boldsymbol{h}^{\boldsymbol\alpha}_{\boldsymbol{f}}|_+,
\end{align*}
where $\boldsymbol{h}^{\boldsymbol\alpha}_{\boldsymbol{f}}$ is the solution to the following boundary value problem:
\begin{align*}
\left\{ \begin{aligned}
&\mathcal{L}_{\lambda,\mu}\boldsymbol{h}^{\boldsymbol\alpha}_{\boldsymbol{f}}
+k^{2}\boldsymbol{h}^{\boldsymbol\alpha}_{\boldsymbol{f}}=0&&\text{in } Y\backslash\overline{D},\\
&\boldsymbol{h}^{\boldsymbol\alpha}_{\boldsymbol{f}}=\boldsymbol{f}&&\text{on } \partial D,\\
&e^{-\mathrm{i}\boldsymbol\alpha\cdot\boldsymbol x}\boldsymbol{h}^{\boldsymbol\alpha}_{\boldsymbol{f}} \text{ is periodic}.
\end{aligned}\right.
 \end{align*}
\end{defi}

\begin{lemm}\label{DtNasy}
For $\boldsymbol\alpha\neq 0$ and $k\in\mathbb{C}$ with $k\rightarrow 0$, the quasi-periodic DtN map $\boldsymbol{\mathcal{M}}^{\boldsymbol \alpha,k}$ is analytic in $k$,  and its asymptotic expansion is given by
\begin{align*}
\boldsymbol{\mathcal{M}}^{\boldsymbol\alpha,k}=\boldsymbol{\mathcal{M}}^{\boldsymbol\alpha,0}[\boldsymbol f]+\mathcal{O}(k^{2}),
\end{align*}
where the operator $\boldsymbol{\mathcal{M}}^{\boldsymbol\alpha,0}$  is defined as 
\begin{align}\label{eq:M0}
\boldsymbol{\mathcal{M}}^{\boldsymbol\alpha,0}=(\frac{1}{2}\boldsymbol{\mathcal{I}}+(\boldsymbol{\mathcal{K}}^{-\boldsymbol\alpha,0}_D)^{\ast})(\boldsymbol{\mathcal{S}}_{D}^{\boldsymbol\alpha,0})^{-1}.
\end{align}
\end{lemm}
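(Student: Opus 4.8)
The plan is to combine the layer-potential representation of the exterior Dirichlet solution with the jump relations \eqref{quasi-jump} and the asymptotic expansions already established in Lemmas~\ref{SK:asy} and \ref{le:IS-series}. First I would represent the solution of the boundary value problem in Definition~\ref{DtN:def} as a quasi-periodic single-layer potential: set $\boldsymbol{h}^{\boldsymbol\alpha}_{\boldsymbol f}=\boldsymbol{\widetilde{\mathcal{S}}}^{\boldsymbol\alpha,k}_D[\boldsymbol\varphi]$ in $Y\backslash\overline D$, where the density $\boldsymbol\varphi\in H^{-\frac12}(\partial D)^d$ is determined by the Dirichlet condition. Taking the trace on $\partial D$ and using that $\boldsymbol{\mathcal{S}}^{\boldsymbol\alpha,k}_D$ is invertible for small $k$ (Lemma~\ref{le:IS-series}), we obtain $\boldsymbol\varphi=(\boldsymbol{\mathcal{S}}^{\boldsymbol\alpha,k}_D)^{-1}[\boldsymbol f]$. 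By Lemma~\ref{exter:real} this representation is the unique solution, so the construction is legitimate. Applying the traction operator and the exterior jump relation in \eqref{quasi-jump} then gives
\begin{align*}
\boldsymbol{\mathcal{M}}^{\boldsymbol\alpha,k}[\boldsymbol f]
=\partial_{\boldsymbol\nu}\boldsymbol{\widetilde{\mathcal{S}}}^{\boldsymbol\alpha,k}_D[\boldsymbol\varphi]\big|_+
=\big(\tfrac12\boldsymbol{\mathcal{I}}+(\boldsymbol{\mathcal{K}}^{-\boldsymbol\alpha,k}_D)^*\big)(\boldsymbol{\mathcal{S}}^{\boldsymbol\alpha,k}_D)^{-1}[\boldsymbol f],
\end{align*}
which is the exact (not merely asymptotic) factorized form of the DtN map.

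Next I would read off analyticity in $k$. Lemma~\ref{le:series} shows $\boldsymbol{G}^{\boldsymbol\alpha,k}$ depends analytically on $k$ (indeed it is an even power series in $k$ near $k=0$ since $\boldsymbol\alpha\neq0$ keeps all denominators away from zero), hence so do $\boldsymbol{\mathcal{S}}^{\boldsymbol\alpha,k}_D$ and $(\boldsymbol{\mathcal{K}}^{-\boldsymbol\alpha,k}_D)^*$ as operators; the inverse $(\boldsymbol{\mathcal{S}}^{\boldsymbol\alpha,k}_D)^{-1}$ is analytic by Lemma~\ref{le:IS-series} (the Neumann-series argument given there). A composition and sum of operator-valued analytic functions is analytic, so $\boldsymbol{\mathcal{M}}^{\boldsymbol\alpha,k}$ is analytic in $k$ near $0$.

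Finally I would extract the leading order. Substituting the expansions
\begin{align*}
(\boldsymbol{\mathcal{K}}^{-\boldsymbol\alpha,k}_D)^*=(\boldsymbol{\mathcal{K}}^{-\boldsymbol\alpha,0}_D)^*+k^2(\boldsymbol{\mathcal{K}}^{-\boldsymbol\alpha}_{D,1})^*+\mathcal{O}(k^4),\qquad
(\boldsymbol{\mathcal{S}}^{\boldsymbol\alpha,k}_D)^{-1}=(\boldsymbol{\mathcal{S}}^{\boldsymbol\alpha,0}_D)^{-1}+\mathcal{O}(k^2)
\end{align*}
from Lemmas~\ref{SK:asy} and \ref{le:IS-series} into the factorized formula and multiplying out, the $k^0$ term is exactly $\big(\tfrac12\boldsymbol{\mathcal{I}}+(\boldsymbol{\mathcal{K}}^{-\boldsymbol\alpha,0}_D)^*\big)(\boldsymbol{\mathcal{S}}^{\boldsymbol\alpha,0}_D)^{-1}$, which is $\boldsymbol{\mathcal{M}}^{\boldsymbol\alpha,0}$ as defined in \eqref{eq:M0}, and the remainder collects into $\mathcal{O}(k^2)$ (only even powers appear, consistent with the $k^2$-expansions above). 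This yields the claimed expansion $\boldsymbol{\mathcal{M}}^{\boldsymbol\alpha,k}=\boldsymbol{\mathcal{M}}^{\boldsymbol\alpha,0}+\mathcal{O}(k^2)$.

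I expect the main obstacle to be purely a matter of bookkeeping regularity rather than a deep difficulty: one must check that the single-layer potential representation indeed lands in $\boldsymbol H^1_{\boldsymbol\alpha,\mathrm{per}}(Y\backslash\overline D)$ so that the traction trace $\partial_{\boldsymbol\nu}\boldsymbol{\widetilde{\mathcal{S}}}^{\boldsymbol\alpha,k}_D[\boldsymbol\varphi]\big|_+$ is well-defined in $\boldsymbol H^{-\frac12}_{\boldsymbol\alpha,\mathrm{per}}(\partial D)$, and that the jump relation \eqref{quasi-jump}—stated for the free-space-type singularity—applies verbatim here because $\boldsymbol{G}^{\boldsymbol\alpha,k}-\boldsymbol{G}^k$ is smooth across $\partial D$. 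One should also note that the $\mathcal{O}(k^2)$ (rather than $\mathcal{O}(k)$) sharpness relies on the evenness in $k$ visible in Lemma~\ref{le:series}; this is what makes the first correction term genuinely quadratic.
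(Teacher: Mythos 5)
Your proposal is correct and follows essentially the same route as the paper: represent $\boldsymbol{h}^{\boldsymbol\alpha}_{\boldsymbol f}$ as a quasi-periodic single-layer potential, invert $\boldsymbol{\mathcal{S}}^{\boldsymbol\alpha,k}_{D}$ for small $k$, apply the jump relation \eqref{quasi-jump} to get the factorization $\big(\tfrac12\boldsymbol{\mathcal{I}}+(\boldsymbol{\mathcal{K}}^{-\boldsymbol\alpha,k}_{D})^*\big)(\boldsymbol{\mathcal{S}}^{\boldsymbol\alpha,k}_{D})^{-1}$, and substitute the expansions of Lemmas \ref{SK:asy} and \ref{le:IS-series}. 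Your additional remarks on analyticity, uniqueness of the representation, and the evenness in $k$ are sound refinements that the paper leaves implicit.
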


\begin{proof}
Assume that $\boldsymbol{\varphi}\in H^{-\frac{1}{2}}(\partial D)^{d}$ satisfies $\boldsymbol{f}=\boldsymbol{\mathcal{S}}^{\boldsymbol\alpha,k}_{D}[\boldsymbol{\varphi}]$ on $\partial D$. Then, in $Y\backslash\overline{D}$, we have $\boldsymbol{h}^{\boldsymbol\alpha}_{\boldsymbol{f}}=\widetilde{\boldsymbol{\mathcal{S}}}^{\boldsymbol\alpha,k}_{D}[\boldsymbol{\varphi}]$. Since $\boldsymbol{\mathcal{S}}^{\boldsymbol\alpha,k}_{D}$ is invertible as $k\rightarrow0,$ it follows that $\boldsymbol{\varphi}=(\boldsymbol{\mathcal{S}}^{\boldsymbol\alpha,k}_{D})^{-1}[\boldsymbol{f}]$ on $\partial D$. Applying Lemmas \ref{SK:asy} and \ref{le:IS-series}, we obtain 
\begin{align*}
\boldsymbol{\mathcal{M}}^{\boldsymbol\alpha,k}[\boldsymbol f]&=\big(\frac{1}{2}\boldsymbol{\mathcal{I}}+(\boldsymbol{\mathcal{K}}^{-\boldsymbol\alpha,k}_{D})^*\big)[\boldsymbol\varphi]
=\big(\frac{1}{2}\boldsymbol{\mathcal{I}}+(\boldsymbol{\mathcal{K}}_{D}^{-\boldsymbol\alpha,k})^*\big)(\boldsymbol{\mathcal{S}}^{\boldsymbol\alpha,k}_{D})^{-1}[\boldsymbol f]\\
&=\big(\frac{1}{2}\boldsymbol{\mathcal{I}}+(\boldsymbol{\mathcal{K}}^{-\boldsymbol\alpha,0}_D)^*+\sum_{l=1}^{\infty}k^{2l}(\boldsymbol{\mathcal{K}}_{D,l}^{-\boldsymbol\alpha})^*\big)
\big((\boldsymbol{\mathcal{S}}_{D}^{\boldsymbol\alpha,0})^{-1}-k^{2}(\boldsymbol{\mathcal{S}}_{D}^{\boldsymbol\alpha,0})^{-1}\boldsymbol{\mathcal{S}}_{D,1}^{\boldsymbol\alpha}(\boldsymbol{\mathcal{S}}_{D}^{\boldsymbol\alpha,0})^{-1}+\mathcal{O}(k^{4})\big)[\boldsymbol {f}]\\
&=\big(\frac{1}{2}\boldsymbol{\mathcal{I}}+(\boldsymbol{\mathcal{K}}^{-\boldsymbol\alpha,0}_D)^*\big)(\boldsymbol{\mathcal{S}}_{D}^{\boldsymbol\alpha,0})^{-1}[\boldsymbol {f}]+\mathcal{O}(k^{2}) \\
&=\boldsymbol{\mathcal{M}}^{\boldsymbol\alpha,0}[\boldsymbol {f}]+\mathcal{O}(k^{2}),
\end{align*}
which completes the proof. 
\end{proof}

The quasi-periodic DtN map, as defined in Definition \ref{DtN:def}, enables us to reformulate problem \eqref{Lame}--\eqref{TC} as the following boundary value problem in the domain $D$:
\begin{align}\label{int:solu}
\left\{ \begin{aligned}
&\mathcal{L}_{\lambda,\mu}\boldsymbol{u}+\rho\tau^2\omega^2\boldsymbol{u}=0&&\text{in } D,\\
&\partial_{\boldsymbol{\nu}} \boldsymbol{u}=\delta\boldsymbol{\mathcal{M}}^{\boldsymbol\alpha,\sqrt{\rho}\omega}[\boldsymbol{u}]&&\text{on }\partial D,\\
&e^{-\mathrm{i}\boldsymbol\alpha\cdot\boldsymbol x}\boldsymbol{u}\quad\text{is periodic}.
\end{aligned}\right.
\end{align}
The solution in $D$ can be determined by solving \eqref{int:solu}. Furthermore, using layer potential techniques and following \cite[p.136]{Layer009}, as $\omega\rightarrow 0$, a solution to \eqref{Lame}--\eqref{TC} can be sought in the form
\begin{equation*}
\boldsymbol u=\begin{cases}\boldsymbol{\widetilde{\mathcal{S}}}_D^{\boldsymbol\alpha,\sqrt{\rho}\tau\omega}[\boldsymbol\varphi]&\quad\text{in }D,\\ \boldsymbol{\widetilde{\mathcal{S}}}_D^{\boldsymbol\alpha,\sqrt{\rho}\omega}[\boldsymbol\psi]&\quad\text{in }Y\backslash \overline{D},\end{cases}
\end{equation*}
where $\boldsymbol\varphi, \boldsymbol\psi\in L^2(\partial D)^d$. As $\omega\rightarrow 0$, applying the transmission conditions in \eqref{TC} yields the solution in $Y\backslash\overline{D}$ as follows:
\begin{align}\label{ext:solu}
\boldsymbol{u}^{\text{ex}}(\boldsymbol x)=\boldsymbol{\widetilde{\boldsymbol{\mathcal{S}}}}^{\boldsymbol\alpha,\sqrt{\rho}\omega}_{D}\left[(\boldsymbol{\mathcal{S}}^{\boldsymbol\alpha,\sqrt{\rho}\omega}_{D})^{-1}[\boldsymbol{u}|_{\partial D}](\boldsymbol x)\right],\quad\forall\boldsymbol x\in Y\backslash\overline{D},
\end{align}
where $\boldsymbol{u}|_{\partial D}$ is determined from \eqref{int:solu}.

\begin{defi}\label{D:reson}
A frequency $\omega^{\boldsymbol \alpha}(\delta)$ at which equation \eqref{int:solu} admits a nontrivial solution $\boldsymbol u$ is referred to as a scattering resonant frequency. Furthermore, a subwavelength resonant frequency is a resonant frequency that satisfies $\omega^{\boldsymbol \alpha}(\delta)\rightarrow 0$ as $\delta\rightarrow 0^+$. The corresponding nontrivial solution, denoted by $\boldsymbol{u}(\omega^{\boldsymbol \alpha}(\delta),\delta)$, is called a subwavelength resonant mode.
\end{defi}

In the limiting case where $\delta=0$, the system reduces to
\begin{align}\label{int:neu}
\left\{ \begin{aligned}
&\mathcal{L}_{\lambda,\mu}\boldsymbol{u}+\rho\tau^2\omega^2\boldsymbol{u}=0&&\text{in }D,\\
&\partial_{\boldsymbol\nu}\boldsymbol u=0&&\text{on }\partial D,\\
&e^{-\mathrm{i}\boldsymbol\alpha\cdot\boldsymbol x}\boldsymbol{u}\quad\text{is periodic}.
\end{aligned}\right.
 \end{align}
It is evident that $\omega=0$ is an eigenvalue of \eqref{int:neu}. By utilizing the quasi-periodicity of the solutions in the exterior region $Y\backslash \overline{D}$ and the continuity on $\partial D$, we find that the corresponding linearly independent solutions to \eqref{int:neu} are given by $\boldsymbol e_1,...,\boldsymbol e_d$, where $\boldsymbol e_i$ is the $i$-th standard basis vector in $\mathbb R^d$. This implies that $\omega=0$ is a resonant frequency with multiplicity $d$. Moreover, equation \eqref{int:solu} can be interpreted as a small perturbation of \eqref{int:neu} as $\delta\rightarrow 0$. By applying Gohberg--Sigal theory, we can establish the following result.

\begin{lemm}
As $\delta\rightarrow 0^+$, the system \eqref{int:solu} has $d$ eigenvalues $\omega^{\boldsymbol \alpha}(\delta)$ , counted with multiplicity, which are located in a neighborhood of zero in $\mathbb{C}^d$. Moreover, $\omega^{\boldsymbol \alpha}(0)=0$ and the eigenvalues $\omega^{\boldsymbol \alpha}(\delta)$ depend continuously on $\delta$.
\end{lemm}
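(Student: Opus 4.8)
The plan is to realize the resonant frequencies of \eqref{int:solu} as characteristic values of a holomorphic operator-valued function and then count them near $\omega=0$ via the generalized argument principle (Gohberg--Sigal theory). First I would rewrite \eqref{int:solu} as a boundary integral equation. Using the single-layer representation $\boldsymbol u=\boldsymbol{\widetilde{\mathcal S}}_D^{\boldsymbol\alpha,\sqrt{\rho}\tau\omega}[\boldsymbol\varphi]$ in $D$, the PDE in $D$ is satisfied automatically, and imposing the Neumann-type boundary condition $\partial_{\boldsymbol\nu}\boldsymbol u=\delta\boldsymbol{\mathcal M}^{\boldsymbol\alpha,\sqrt{\rho}\omega}[\boldsymbol u]$ on $\partial D$ together with the interior jump relation \eqref{quasi-jump} yields an equation of the form
\begin{align*}
\boldsymbol{\mathcal A}(\omega,\delta)[\boldsymbol\varphi]:=\Big(-\tfrac12\boldsymbol{\mathcal I}+(\boldsymbol{\mathcal K}_D^{-\boldsymbol\alpha,\sqrt{\rho}\tau\omega})^{*}-\delta\,\boldsymbol{\mathcal M}^{\boldsymbol\alpha,\sqrt{\rho}\omega}\boldsymbol{\mathcal S}_D^{\boldsymbol\alpha,\sqrt{\rho}\tau\omega}\Big)[\boldsymbol\varphi]=0
\end{align*}
on $\partial D$. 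By Lemma \ref{SK:asy}, Lemma \ref{le:IS-series} and Lemma \ref{DtNasy}, the operator $\boldsymbol{\mathcal A}(\omega,\delta)$ is analytic in $\omega$ in a fixed neighborhood of $0$ and analytic (indeed affine) in $\delta$; moreover $\omega^{\boldsymbol\alpha}(\delta)$ is a scattering resonant frequency precisely when $\boldsymbol{\mathcal A}(\omega^{\boldsymbol\alpha}(\delta),\delta)$ is not injective, i.e.\ when $\omega^{\boldsymbol\alpha}(\delta)$ is a characteristic value of $\boldsymbol{\mathcal A}(\cdot,\delta)$.

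Next I would analyze the unperturbed operator $\boldsymbol{\mathcal A}(\omega,0)=-\tfrac12\boldsymbol{\mathcal I}+(\boldsymbol{\mathcal K}_D^{-\boldsymbol\alpha,\sqrt{\rho}\tau\omega})^{*}$, whose characteristic values are exactly the eigenvalues of the quasi-periodic Neumann problem \eqref{int:neu} (this is the standard layer-potential characterization of Neumann eigenvalues, using the invertibility of $\boldsymbol{\mathcal S}_D^{\boldsymbol\alpha,0}$ from Lemma \ref{le:IS-series}). As observed in the text just before the statement, $\omega=0$ is an eigenvalue of \eqref{int:neu} with the $d$-dimensional eigenspace $\mathrm{span}\{\boldsymbol e_1,\dots,\boldsymbol e_d\}$; equivalently, $\boldsymbol{\mathcal A}(0,0)$ has a $d$-dimensional kernel. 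I would then fix a small circle $\gamma=\{|\omega|=r\}$ in the complex $\omega$-plane, chosen small enough that $0$ is the only characteristic value of $\boldsymbol{\mathcal A}(\cdot,0)$ enclosed by $\gamma$ and that $\boldsymbol{\mathcal A}(\omega,0)$ is invertible on $\gamma$ (possible since the nonzero quasi-periodic Neumann eigenvalues are bounded away from $0$, using analyticity and discreteness of the spectrum). Counting multiplicity: since $\boldsymbol{\mathcal A}(\cdot,0)$ is a finite-meromorphic Fredholm analytic family of index zero, the multiplicity of its characteristic value at $0$ (in the Gohberg--Sigal sense) equals the algebraic multiplicity of $\omega=0$ as a root, and one checks it is exactly $d$ — either directly from the $d$-dimensional kernel and the absence of generalized Jordan chains (each $\boldsymbol e_i$ persists trivially so the null multiplicity is $d$), or, more robustly, by noting that the total multiplicity is given by $\tfrac{1}{2\pi\mathrm i}\mathrm{tr}\oint_\gamma \boldsymbol{\mathcal A}(\omega,0)^{-1}\partial_\omega\boldsymbol{\mathcal A}(\omega,0)\,d\omega$ and that this integer is $d$.

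Finally I would invoke the Gohberg--Sigal stability theorem: for $\delta$ small enough (depending only on $r$ and on $\max_{\omega\in\gamma}\|\boldsymbol{\mathcal A}(\omega,0)^{-1}\|$), the perturbed family $\boldsymbol{\mathcal A}(\cdot,\delta)$ remains invertible on $\gamma$ and its characteristic values inside $\gamma$, counted with multiplicity, total exactly $d$; moreover the total multiplicity function $\delta\mapsto \tfrac{1}{2\pi\mathrm i}\mathrm{tr}\oint_\gamma \boldsymbol{\mathcal A}(\omega,\delta)^{-1}\partial_\omega\boldsymbol{\mathcal A}(\omega,\delta)\,d\omega$ is continuous (being integer-valued, locally constant) and the characteristic values themselves vary continuously with $\delta$ in the standard sense (the elementary-divisor/Rouché-type continuity that accompanies the Gohberg--Sigal theorem). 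This gives $d$ eigenvalues $\omega^{\boldsymbol\alpha}(\delta)$, counted with multiplicity, in the disc $\{|\omega|<r\}\subset\mathbb C$ (the phrasing ``in $\mathbb C^d$'' in the statement just records that we collect the $d$-tuple of them), depending continuously on $\delta$, with $\omega^{\boldsymbol\alpha}(0)=0$. The main obstacle I anticipate is the bookkeeping in the multiplicity count: one must be careful that the characterization ``characteristic value of $\boldsymbol{\mathcal A}(\cdot,0)$ $\Leftrightarrow$ Neumann eigenvalue'' respects multiplicities, which requires checking that no nontrivial generalized Jordan chains of $\boldsymbol{\mathcal A}$ occur at $\omega=0$ beyond those forced by the eigenspace dimension — equivalently that the characteristic value $0$ is of order one with null multiplicity $d$; this follows from writing $\boldsymbol{\mathcal A}(\omega,0)=\boldsymbol{\mathcal A}(0,0)+\omega^2\boldsymbol B+O(\omega^4)$ (only even powers of $\omega$ appear through $\sqrt\rho\tau\omega$ in $\boldsymbol G^{\boldsymbol\alpha,k}$) and verifying on the $d$-dimensional kernel that the relevant Schur complement is invertible, a short computation using that $\boldsymbol e_i$ are genuine constant-in-$D$ eigenfunctions with zero traction.
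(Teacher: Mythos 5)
Your overall strategy -- view the resonances as characteristic values of an analytic operator family, and transfer the multiplicity-$d$ degeneracy at $(\omega,\delta)=(0,0)$ to small $\delta$ by Gohberg--Sigal stability -- is exactly the route the paper gestures at (the paper gives no more than ``\eqref{int:solu} is a small perturbation of \eqref{int:neu}; apply Gohberg--Sigal''), and your boundary-integral realization $\boldsymbol{\mathcal A}(\omega,\delta)=-\tfrac12\boldsymbol{\mathcal I}+(\boldsymbol{\mathcal K}_D^{-\boldsymbol\alpha,\sqrt{\rho}\tau\omega})^{*}-\delta\,\boldsymbol{\mathcal M}^{\boldsymbol\alpha,\sqrt{\rho}\omega}\boldsymbol{\mathcal S}_D^{\boldsymbol\alpha,\sqrt{\rho}\tau\omega}$ is a legitimate way to make that precise. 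However, there is a genuine gap in the multiplicity count, which is the whole content of the lemma.

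The gap is your assertion that $\boldsymbol{\mathcal A}(0,0)$ has a $d$-dimensional kernel. At $\omega=\delta=0$ your equation is the layer-potential formulation of the \emph{interior static Neumann problem} for $\mathcal L_{\lambda,\mu}$ in $D$, whose solution space is the full space of rigid motions, of dimension $\tfrac{d(d+1)}{2}$, not $d$: for any rigid motion $\boldsymbol r$ (rotations included), the density $\boldsymbol\varphi=(\boldsymbol{\mathcal S}_D^{\boldsymbol\alpha,0})^{-1}[\boldsymbol r|_{\partial D}]$ satisfies $\widetilde{\boldsymbol{\mathcal S}}_D^{\boldsymbol\alpha,0}[\boldsymbol\varphi]=\boldsymbol r$ in $D$ (uniqueness of the interior Dirichlet problem), hence $\bigl(-\tfrac12\boldsymbol{\mathcal I}+(\boldsymbol{\mathcal K}_D^{-\boldsymbol\alpha,0})^{*}\bigr)[\boldsymbol\varphi]=\partial_{\boldsymbol\nu}\boldsymbol r|_-=0$. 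The paper's reduction of the limiting eigenspace from $\tfrac{d(d+1)}{2}$ to the $d$ translations is achieved by invoking the quasi-periodicity/continuity constraints attached to \eqref{int:neu}; whatever one thinks of that argument, it is a constraint on the mode, and it has no counterpart in your integral equation on $\partial D$: the single-layer potential is automatically quasi-periodic as a field on $\mathbb R^d$ even when its restriction to $D$ is a rotation, so nothing in $\boldsymbol{\mathcal A}(\omega,\delta)[\boldsymbol\varphi]=0$ filters the rotational kernel elements out. Consequently the Rouch\'e/stability count around your circle $\gamma$ does not deliver $d$; it delivers the (larger) multiplicity generated by all rigid motions, and you would have to either pass to the paper's DtN/variational formulation in the restricted space or explain why the rotational branches are not characteristic values of $\boldsymbol{\mathcal A}(\cdot,\delta)$ near zero -- which, by the very stability theorem you invoke, they are.

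A secondary inconsistency: you claim the characteristic value $0$ of $\boldsymbol{\mathcal A}(\cdot,0)$ has order one and null multiplicity $d$, while simultaneously noting that $\boldsymbol{\mathcal A}(\omega,0)=\boldsymbol{\mathcal A}(0,0)+\omega^{2}\boldsymbol B+\mathcal O(\omega^{4})$. Since $\partial_\omega\boldsymbol{\mathcal A}(0,0)=0$, every kernel vector $\boldsymbol\varphi_0$ admits a Jordan chain of length at least two ($\boldsymbol\varphi_1$ with $\boldsymbol{\mathcal A}(0,0)\boldsymbol\varphi_1=0$ works), so the Gohberg--Sigal multiplicity is at least twice the kernel dimension; this is precisely why the perturbed resonances emerge in $\pm$ pairs $\omega\sim\pm c\,\delta^{1/2}$, as in Theorem \ref{theo:fre}. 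Your bookkeeping must account for this factor of two (the lemma's ``$d$ eigenvalues'' versus the later $\omega_i^{\pm,\boldsymbol\alpha}$ already requires this care), and as written the ``order one, null multiplicity $d$'' claim is not correct.
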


By applying integration by parts, the system \eqref{int:solu} admits the following variational formulation:
\begin{align}\label{int:varia}
&\lambda\int_{D}(\nabla\cdot\boldsymbol{u})(\nabla\cdot\overline{\boldsymbol{v}})\mathrm{d}\boldsymbol{x}
+\frac{\mu}{2}\int_{D}(\nabla\boldsymbol{u}+\nabla\boldsymbol{u}^\top):(\nabla\overline{\boldsymbol{v}}+\nabla\overline{\boldsymbol{v}}^\top)
\mathrm{d}\boldsymbol{x}-\rho\tau^2\omega^2\int_D\boldsymbol{u}\cdot\overline{\boldsymbol{v}}\mathrm{d}\boldsymbol{x}\nonumber\\
&=\delta\int_{\partial D}\boldsymbol{\mathcal{M}}^{\boldsymbol\alpha,\sqrt{\rho}\omega}[\boldsymbol{u}]\cdot\overline{\boldsymbol{v}}\mathrm{d}\sigma(\boldsymbol{x}),\quad\forall\,\boldsymbol {{v}}\in \boldsymbol H^{1}_{\boldsymbol{\alpha},\mathrm{per}}(D).
\end{align}
Here, for two square matrices $\boldsymbol A$ and $\boldsymbol B$, the notation $\boldsymbol A:\boldsymbol B=\mathrm{tr}(\boldsymbol{AB}^{\top})$ denotes the Frobenius inner product.


\subsection{Subwavelength resonant frequencies}

This subsection derives the asymptotic expansions of subwavelength resonant frequencies and their corresponding nontrivial solutions. First, we introduce an auxiliary sesquilinear form that is both bounded and coercive. Then, we establish a criterion for the existence and uniqueness of the weak solution to the variational problem \eqref{int:varia}, providing an explicit characterization of subwavelength resonances.

Based on $\eqref{int:varia}$, we introduce the following sesquilinear form for $\boldsymbol{u},\boldsymbol{v}\in \boldsymbol H^1_{\boldsymbol{\alpha},\mathrm{per}}(D)$:
\begin{align*}
a_{\omega,\delta}^{\boldsymbol\alpha}(\boldsymbol{u},\boldsymbol{v})&:=a_{0,0}(\boldsymbol{u},\boldsymbol{v})-\rho\tau^{2}\omega^2\int_{D}\boldsymbol{u}\cdot\overline{\boldsymbol{v}}\mathrm{d}\boldsymbol{x}-\delta\int_{\partial D}\boldsymbol{\mathcal{M}}^{\boldsymbol\alpha,\sqrt{\rho}\omega}[\boldsymbol{u}]\cdot\overline{\boldsymbol{v}}\mathrm{d}\sigma(\boldsymbol{x}), 
\end{align*}
where
\begin{align}\label{a00(u,v)}
a_{0,0}(\boldsymbol{u},\boldsymbol{v})&:=\lambda\int_{D}(\nabla\cdot\boldsymbol{u})(\nabla\cdot\overline{\boldsymbol{v}})\mathrm{d}\boldsymbol{x}
+\frac{\mu}{2}\int_{D}(\nabla\boldsymbol{u}+\nabla\boldsymbol{u}^\top):(\nabla\overline{\boldsymbol{v}}+\nabla\overline{\boldsymbol{v}}^\top)
\mathrm{d}\boldsymbol{x}\nonumber\\
&\quad+\sum_{i=1}^d\int_{D}\boldsymbol{u}\cdot\boldsymbol{e}_i\mathrm{d}\boldsymbol{x}\int_{D}\overline{\boldsymbol{v}}\cdot\boldsymbol{e}_i\mathrm{d}\boldsymbol{x}.
\end{align}

As $\delta\in\mathbb{R}^+\rightarrow 0$, perturbation theory implies that establishing the boundedness and coercivity of $a_{\omega, \delta}^{\boldsymbol \alpha}$
 reduces to proving that $a_{0, 0}$ is bounded and coercive.

\begin{lemm}
The sesquilinear form $a_{0,0}$, defined in \eqref{a00(u,v)}, is bounded and coercive.
\end{lemm}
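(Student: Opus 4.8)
The plan is to prove boundedness and coercivity of $a_{0,0}$ on $\boldsymbol H^1_{\boldsymbol\alpha,\mathrm{per}}(D)$ directly from its definition in \eqref{a00(u,v)}. Boundedness is routine: each of the three terms is estimated by the Cauchy--Schwarz inequality and the obvious bounds $\|\nabla\cdot\boldsymbol u\|_{L^2(D)}\leq \|\nabla\boldsymbol u\|_{L^2(D)}$, $\|\nabla\boldsymbol u+\nabla\boldsymbol u^\top\|_{L^2(D)}\leq 2\|\nabla\boldsymbol u\|_{L^2(D)}$, and, for the last term, $|\int_D\boldsymbol u\cdot\boldsymbol e_i\,\mathrm d\boldsymbol x|\leq |D|^{1/2}\|\boldsymbol u\|_{L^2(D)}$, so that $|a_{0,0}(\boldsymbol u,\boldsymbol v)|\leq C\|\boldsymbol u\|_{H^1(D)}\|\boldsymbol v\|_{H^1(D)}$ with $C$ depending only on $\lambda,\mu,|D|,d$.

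For coercivity the key is to control $\|\boldsymbol u\|_{H^1(D)}^2$ by $\mathrm{Re}\,a_{0,0}(\boldsymbol u,\boldsymbol u)$. First I would invoke Korn's second inequality on $D$ (cited as Theorem 2.2 in \cite{Giovann2025}): there is a constant $c_K>0$ such that
\begin{align*}
\|\nabla\boldsymbol u\|_{L^2(D)}^2\leq c_K\Big(\|\boldsymbol u\|_{L^2(D)}^2+\tfrac14\|\nabla\boldsymbol u+\nabla\boldsymbol u^\top\|_{L^2(D)}^2\Big),
\end{align*}
which combined with the strong-convexity bound $\mu>0$ gives
\begin{align*}
\mathrm{Re}\,a_{0,0}(\boldsymbol u,\boldsymbol u)\geq \tfrac\mu2\big\|\tfrac12(\nabla\boldsymbol u+\nabla\boldsymbol u^\top)\big\|_{L^2(D)}^2+\sum_{i=1}^d\Big|\int_D\boldsymbol u\cdot\boldsymbol e_i\,\mathrm d\boldsymbol x\Big|^2\geq c_1\|\nabla\boldsymbol u\|_{L^2(D)}^2-c_2\|\boldsymbol u\|_{L^2(D)}^2+\sum_{i=1}^d\Big|\int_D\boldsymbol u\cdot\boldsymbol e_i\,\mathrm d\boldsymbol x\Big|^2.
\end{align*}
It remains to absorb the negative $L^2$ term. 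I would do this by a compactness/contradiction argument: suppose no constant $c>0$ exists with $\|\boldsymbol u\|_{L^2(D)}^2\leq c\big(\|\nabla\boldsymbol u\|_{L^2(D)}^2+\sum_i|\int_D\boldsymbol u\cdot\boldsymbol e_i|^2\big)$; then there is a sequence $\boldsymbol u_n$ with $\|\boldsymbol u_n\|_{L^2(D)}=1$, $\|\nabla\boldsymbol u_n\|_{L^2(D)}\to0$, and $\int_D\boldsymbol u_n\cdot\boldsymbol e_i\to0$ for every $i$. By Rellich a subsequence converges strongly in $L^2$ to some $\boldsymbol u$ with $\nabla\boldsymbol u=0$, hence $\boldsymbol u$ is a constant vector $\boldsymbol c$ with $\|\boldsymbol c\|_{L^2(D)}=1$; but $\int_D\boldsymbol c\cdot\boldsymbol e_i=0$ for all $i$ forces $\boldsymbol c=0$, a contradiction. (One must check the limit stays in $\boldsymbol H^1_{\boldsymbol\alpha,\mathrm{per}}(D)$, or simply note that constants in $D$ with no periodicity constraint already suffice, since the argument takes place on the fixed bounded domain $D$.) Feeding this Poincaré-type inequality back and choosing the weights appropriately yields $\mathrm{Re}\,a_{0,0}(\boldsymbol u,\boldsymbol u)\geq c_0\|\boldsymbol u\|_{H^1(D)}^2$.

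The main obstacle is the coercivity step, specifically the handling of the added rank-$d$ bilinear term $\sum_i\int_D\boldsymbol u\cdot\boldsymbol e_i\int_D\overline{\boldsymbol v}\cdot\boldsymbol e_i$: this term is precisely engineered to kill the kernel of the pure elastic energy (the rigid-body-type constants $\boldsymbol e_i$ that, because of the quasi-periodic/continuity setup, span the relevant null space), so the compactness argument above must correctly identify that null space as exactly $\mathrm{span}\{\boldsymbol e_1,\dots,\boldsymbol e_d\}$ and verify the extra term sees all of it. An alternative to the contradiction argument, which I would mention as a remark, is to use the generalized Poincaré inequality directly: $\|\boldsymbol u-\boldsymbol\Pi\boldsymbol u\|_{L^2(D)}\leq C_P\|\nabla\boldsymbol u\|_{L^2(D)}$ where $\boldsymbol\Pi\boldsymbol u:=|D|^{-1}\sum_i(\int_D\boldsymbol u\cdot\boldsymbol e_i)\boldsymbol e_i$ is the $L^2$-projection onto constants, so that $\|\boldsymbol u\|_{L^2(D)}^2\leq 2C_P^2\|\nabla\boldsymbol u\|_{L^2(D)}^2+2|D|^{-1}\sum_i|\int_D\boldsymbol u\cdot\boldsymbol e_i|^2$, giving the estimate with explicit constants and no contradiction.
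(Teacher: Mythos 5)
Your boundedness argument is fine and coincides with the paper's. The coercivity part, however, has two genuine gaps. First, you discard the term $\lambda\int_D|\nabla\cdot\boldsymbol u|^2\,\mathrm d\boldsymbol x$ invoking only $\mu>0$; but the standing assumption \eqref{convexity} is the strong convexity condition $\mu>0$, $d\lambda+2\mu>0$, which allows $\lambda<0$, in which case this term is negative and cannot simply be dropped. The bulk of the paper's proof is precisely the pointwise algebraic inequality $\lambda|\nabla\cdot\boldsymbol u|^2+\frac{\mu}{2}|\nabla\boldsymbol u+\nabla\boldsymbol u^\top|^2\geq(d\lambda+2\mu)\sum_{i,j}|\mathcal E_{ij}(\boldsymbol u)|^2\geq0$, with $\mathcal E_{ij}(\boldsymbol u)=\frac12(\partial_{x_i}u_j+\partial_{x_j}u_i)$, which is what justifies your first display under the stated hypotheses; without it your lower bound in terms of the symmetrized gradient is unproved whenever $\lambda<0$.

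Second, the absorption step does not close as written. After Korn you have $\mathrm{Re}\,a_{0,0}(\boldsymbol u,\boldsymbol u)\geq c_1\|\nabla\boldsymbol u\|_{L^2(D)}^2-c_2\|\boldsymbol u\|_{L^2(D)}^2+\sum_i|\int_D\boldsymbol u\cdot\boldsymbol e_i\,\mathrm d\boldsymbol x|^2$ with fixed constants, and your Poincar\'e-type inequality gives $\|\boldsymbol u\|_{L^2(D)}^2\leq c\bigl(\|\nabla\boldsymbol u\|_{L^2(D)}^2+\sum_i|\int_D\boldsymbol u\cdot\boldsymbol e_i\,\mathrm d\boldsymbol x|^2\bigr)$; substituting leaves the coefficient $c_1-c_2c$ in front of $\|\nabla\boldsymbol u\|_{L^2(D)}^2$, which need not be positive, and no choice of weights can repair this, because the estimate you are aiming at is false on plain $H^1(D)^d$: for a rigid rotation $\boldsymbol u=\boldsymbol B(\boldsymbol x-\boldsymbol x_c)$ with $\boldsymbol B$ skew-symmetric one has $\nabla\cdot\boldsymbol u=0$, $\nabla\boldsymbol u+\nabla\boldsymbol u^\top=0$ and $\int_D\boldsymbol u\,\mathrm d\boldsymbol x=0$, so $a_{0,0}(\boldsymbol u,\boldsymbol u)=0$ while $\|\boldsymbol u\|_{H^1(D)}\neq0$. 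In other words, the null space of the elastic energy is the full space of rigid motions (dimension $\frac{d(d+1)}{2}$), not merely $\mathrm{span}\{\boldsymbol e_1,\dots,\boldsymbol e_d\}$, and the rank-$d$ mean-value term does not see the rotations; your parenthetical remark that ``constants in $D$ with no periodicity constraint already suffice'' is exactly where the argument breaks. The contradiction argument has to be run directly on $\|\boldsymbol u\|_{H^1(D)}^2\lesssim\sum_{i,j}\|\mathcal E_{ij}(\boldsymbol u)\|_{L^2(D)}^2+\sum_i|\int_D\boldsymbol u\cdot\boldsymbol e_i\,\mathrm d\boldsymbol x|^2$ (Korn plus Rellich), with the rotations excluded by the quasi-periodicity encoded in $\boldsymbol H^1_{\boldsymbol{\alpha},\mathrm{per}}(D)$ --- this is exactly the role of the third equation in the overdetermined system the paper exhibits at the end of its proof, which your version drops.
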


\begin{proof}
The proof follows a similar approach to that in \cite{CGLR2024}. By applying the Cauchy--Schwarz inequality, we deduce the boundedness of $a_{0,0}(\boldsymbol{u},\boldsymbol{v})$. It can be verified that 
\begin{align}\label{str:ell}
\lambda|\nabla \cdot \boldsymbol{u}|^2 + \frac{\mu}{2} |\nabla \boldsymbol{u} + \nabla \boldsymbol{u}^\top|^2 \gtrsim \sum^d_{i,j=1}|\mathcal{E}_{ij}(\boldsymbol{u})|^2 \geq0,
\end{align}
where 
\[
\mathcal{E}_{ij}(\boldsymbol{u}):=\frac{1}{2}(\partial_{x_i}u_j+\partial_{x_j}u_i).
\]
In fact, for $\lambda \geq 0$, the inequality \eqref{str:ell} holds trivially. When $\lambda < 0$, we derive
\begin{align*}
&\lambda|\nabla\cdot \boldsymbol u|^{2}+\frac{\mu}{2}|\nabla \boldsymbol u+\nabla \boldsymbol u^\top|^{2} \\
& =\lambda\sum_{i=1}^{d}(\partial_{x_{i}}u_{i})^{2}+\lambda\sum_{i\neq j}^{d}\partial_{x_{i}}u_{i}\partial_{x_{j}}u_{j}+2\mu\sum_{i=1}^{d}(\partial_{x_{i}}u_{i})^{2}+\frac{\mu}{2}\sum_{i\neq j}^{d}(\partial_{x_{i}}u_{j}+\partial_{x_{j}}u_{i})^{2} \\
& \geq\lambda\sum_{i=1}^{d}(\partial_{x_{i}}u_{i})^{2}+\lambda\sum_{i\neq j}^{d}\partial_{x_{i}}u_{i}\partial_{x_{j}}u_{j}+\frac{\lambda}{2}\sum_{i\neq j}^{d}(\partial_{x_{i}}u_{i}-\partial_{x_{j}}u_{j})^{2}+2\mu\sum_{i=1}^{d}(\partial_{x_{i}}u_{i})^{2}\\
&\quad +\frac{d\lambda+2\mu}{4}\sum_{i\neq j}^{d}(\partial_{x_{i}}u_{j}+\partial_{x_{j}}u_{i})^{2} \\
& =\lambda\sum_{i=1}^{d}(\partial_{x_{i}}u_{i})^{2}+(d-1)\lambda\sum_{i=1}^{d}(\partial_{x_{i}}u_{i})^{2}+2\mu\sum_{i=1}^{d}(\partial_{x_{i}}u_{i})^{2}+2\mu\sum_{i\neq j}^{d}|\mathcal{E}_{ij}(\boldsymbol{u})|^2+d\lambda\sum_{i\neq j}^{d}|\mathcal{E}_{ij}(\boldsymbol{u})|^2 \\
& =(d\lambda+2\mu)\sum_{i,j=1}^{d}|\mathcal{E}_{ij}(\boldsymbol{u})|^2\geq0,
\end{align*}
where we have used $\eqref{convexity}$. By employing \eqref{str:ell} together with Korn's inequality \cite[p.299]{Mclean2000strongly}, we establish the coercivity of $a_{0,0}(\boldsymbol{u},\boldsymbol{u})$ via a contradiction argument. In particular, we note that the system 
\begin{align*}
\left\{ \begin{aligned}
&\frac{1}{2}(\partial_{x_i}u_j+\partial_{x_j}u_i)=0,\\
&\sum_{i=1}^d\left|\int_{D}\boldsymbol{u}\cdot\boldsymbol{e}_i \mathrm{d}\boldsymbol{x} \right|^2=0,\\
&e^{-\mathrm{i}\boldsymbol\alpha\cdot\boldsymbol x}\boldsymbol{u}\quad\text{is periodic}
\end{aligned}\right.
 \end{align*}
has only the trivial solution.
\end{proof}

By the Lax--Milgram theorem, there exists a unique solution  $\boldsymbol{w}^{\boldsymbol \alpha}_i(\omega,\delta)$ satisfying
\begin{align}\label{wi}
a^{\boldsymbol\alpha}_{\omega,\delta}(\boldsymbol{w}^{\boldsymbol\alpha}_i(\omega,\delta),\boldsymbol{v})=\int_D\overline{\boldsymbol{v}}\cdot \boldsymbol{e}_{i} \mathrm{d}\boldsymbol{x},\quad i=1,\cdots,d.
\end{align}
We define the $d\times d$ matrix, associated with $\boldsymbol{\mathcal{M}}^{\boldsymbol\alpha,0}$, as follows:
\begin{align}\label{G}
\boldsymbol {Q^\alpha}:=(Q^{\boldsymbol\alpha}_{ij})^d_{i,j=1},\quad Q^{\boldsymbol\alpha}_{ij}=-\int_{\partial D} \boldsymbol{\mathcal{M}}^{\boldsymbol\alpha,0}[\boldsymbol e_i]\cdot\boldsymbol e_j\mathrm{d}\sigma(\boldsymbol x).
\end{align}
Furthermore, we introduce the eigenvector $\boldsymbol{h}(\omega,\delta):=(h_i(\omega,\delta))^d_{i=1}$, which satisfies
\begin{align}\label{non:h}
\left(\boldsymbol{I}-\boldsymbol{P}(\omega,\delta)\right)\boldsymbol{h}(\omega,\delta)=0,
\end{align}
where the $d\times d$ matrix $\boldsymbol{P}(\omega,\delta):=(P_{ij}(\omega,\delta))^d_{i,j=1}$ is defined as 
\begin{align}\label{P}
P_{ij}(\omega,\delta)=\int_D \boldsymbol{w}^{\boldsymbol\alpha}_j(\omega,\delta)\cdot \boldsymbol{e}_i\mathrm{d}\boldsymbol{x}.
\end{align}

The following lemma demonstrates that the functions $\boldsymbol{w}^{\boldsymbol\alpha}_i(\omega,\delta)$ serve as basis functions, enabling the reduction of \eqref{int:varia} to a finite-dimensional linear system of  dimension $d\times d$.

\begin{prop}\label{well posed}
For $\omega\in\mathbb{C}$ with $ \omega\rightarrow0$ and $\delta\in\mathbb{R}^+ $ with $ \delta \rightarrow 0$, there exists a nontrivial solution $\boldsymbol{u}(\omega,\delta)\in \boldsymbol H^1_{\boldsymbol{\alpha},\mathrm{per}}(D)$ to the variational problem \eqref{int:varia} if and only if
\begin{align}\label{E:unique}
\det\left(\boldsymbol{I}-\boldsymbol{P}(\omega,\delta)\right)=0.
\end{align}
Under this condition, $\omega$ is identified as a subwavelength resonant frequency, with the corresponding resonant mode given by
\begin{align}\label{solu:express}
\boldsymbol{u}(\omega,\delta)=\sum^{d}_{j=1}h_{j}(\omega,\delta)\boldsymbol{w}^{\boldsymbol \alpha}_j(\omega,\delta),
\end{align}
where $\boldsymbol{h}(\omega,\delta)=(h_i(\omega, \delta))_{i=1}^d$ satisfies \eqref{non:h} and $\boldsymbol{w}^{\boldsymbol \alpha}_j(\omega,\delta)$ is the unique solution to \eqref{wi}.
\end{prop}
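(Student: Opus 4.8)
The plan is to recast the homogeneous variational problem \eqref{int:varia} in terms of the sesquilinear form $a^{\boldsymbol\alpha}_{\omega,\delta}$ and then, via the Lax--Milgram theorem, to collapse it to the $d\times d$ algebraic system \eqref{non:h}, whose matrix is $\boldsymbol I-\boldsymbol P(\omega,\delta)$. Adding and subtracting $\sum_{i=1}^d\big(\int_D\boldsymbol u\cdot\boldsymbol e_i\,\mathrm{d}\boldsymbol{x}\big)\int_D\overline{\boldsymbol v}\cdot\boldsymbol e_i\,\mathrm{d}\boldsymbol{x}$ to the left-hand side of \eqref{int:varia} and recalling the definition \eqref{a00(u,v)} of $a_{0,0}$, I would first observe that $\boldsymbol u\in\boldsymbol H^1_{\boldsymbol\alpha,\mathrm{per}}(D)$ solves \eqref{int:varia} if and only if
\begin{align*}
a^{\boldsymbol\alpha}_{\omega,\delta}(\boldsymbol u,\boldsymbol v)=\sum_{i=1}^d\Big(\int_D\boldsymbol u\cdot\boldsymbol e_i\,\mathrm{d}\boldsymbol{x}\Big)\int_D\overline{\boldsymbol v}\cdot\boldsymbol e_i\,\mathrm{d}\boldsymbol{x}\qquad\text{for all }\boldsymbol v\in\boldsymbol H^1_{\boldsymbol\alpha,\mathrm{per}}(D).
\end{align*}
Writing $c_i:=\int_D\boldsymbol u\cdot\boldsymbol e_i\,\mathrm{d}\boldsymbol{x}$, the right-hand side is the bounded antilinear functional $\boldsymbol v\mapsto\sum_{i=1}^d c_i\int_D\overline{\boldsymbol v}\cdot\boldsymbol e_i\,\mathrm{d}\boldsymbol{x}$, which is exactly $\sum_i c_i$ times the data appearing on the right-hand side of \eqref{wi}.

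\textbf{Reduction to a finite system.} Since $a^{\boldsymbol\alpha}_{\omega,\delta}$ is bounded and coercive on $\boldsymbol H^1_{\boldsymbol\alpha,\mathrm{per}}(D)$ for $\omega\in\mathbb C$ with $\omega\to0$ and $\delta\in\mathbb R^+$ with $\delta\to0$ (as established above from the boundedness and coercivity of $a_{0,0}$ together with the mapping properties of $\boldsymbol{\mathcal M}^{\boldsymbol\alpha,\sqrt{\rho}\omega}$ from Lemma \ref{DtNasy}), the Lax--Milgram theorem guarantees that the functions $\boldsymbol w^{\boldsymbol\alpha}_i(\omega,\delta)$ in \eqref{wi} are well defined and unique. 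Applying Lax--Milgram to the displayed identity, with its right-hand side regarded as a fixed functional, uniqueness and linearity force any solution $\boldsymbol u$ of \eqref{int:varia} to satisfy $\boldsymbol u=\sum_{j=1}^d c_j\boldsymbol w^{\boldsymbol\alpha}_j(\omega,\delta)$ with $c_j=\int_D\boldsymbol u\cdot\boldsymbol e_j\,\mathrm{d}\boldsymbol{x}$. Testing this representation against $\boldsymbol e_i$ and invoking the definition \eqref{P} of $\boldsymbol P(\omega,\delta)$ gives the self-consistency relations $c_i=\sum_{j=1}^d P_{ij}(\omega,\delta)\,c_j$, that is, $(\boldsymbol I-\boldsymbol P(\omega,\delta))\boldsymbol c=0$ for $\boldsymbol c=(c_i)_{i=1}^d$.

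\textbf{The two implications.} If $\boldsymbol u$ is a nontrivial solution of \eqref{int:varia}, then $\boldsymbol c\neq0$: otherwise the right-hand side of the displayed identity vanishes and coercivity forces $\boldsymbol u=0$, a contradiction. Hence $\boldsymbol c$ is a nonzero element of $\mathrm{Ker}(\boldsymbol I-\boldsymbol P(\omega,\delta))$ and \eqref{E:unique} holds. Conversely, if \eqref{E:unique} holds, I would pick $\boldsymbol h(\omega,\delta)\neq0$ with $(\boldsymbol I-\boldsymbol P(\omega,\delta))\boldsymbol h(\omega,\delta)=0$ as in \eqref{non:h} and set $\boldsymbol u(\omega,\delta)$ as in \eqref{solu:express}. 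Linearity of \eqref{wi} then yields $a^{\boldsymbol\alpha}_{\omega,\delta}(\boldsymbol u(\omega,\delta),\boldsymbol v)=\sum_{j=1}^d h_j(\omega,\delta)\int_D\overline{\boldsymbol v}\cdot\boldsymbol e_j\,\mathrm{d}\boldsymbol{x}$, while $\int_D\boldsymbol u(\omega,\delta)\cdot\boldsymbol e_i\,\mathrm{d}\boldsymbol{x}=\big(\boldsymbol P(\omega,\delta)\boldsymbol h(\omega,\delta)\big)_i=h_i(\omega,\delta)$ by \eqref{non:h}; thus the two sides of the displayed identity agree and $\boldsymbol u(\omega,\delta)$ solves \eqref{int:varia}. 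It is nontrivial, since $\boldsymbol u(\omega,\delta)=0$ would force $h_i(\omega,\delta)=\int_D\boldsymbol u(\omega,\delta)\cdot\boldsymbol e_i\,\mathrm{d}\boldsymbol{x}=0$ for every $i$. Finally, as \eqref{int:varia} is the weak formulation of the boundary value problem \eqref{int:solu}, such $\omega$ is a scattering resonant frequency in the sense of Definition \ref{D:reson}, and it is subwavelength because $\omega\to0$ as $\delta\to0^+$.

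\textbf{Main obstacle.} There is no deep obstruction here: the statement is essentially a bookkeeping consequence of Lax--Milgram together with the rank-$d$ structure of the right-hand side of \eqref{int:varia}. The only point requiring care is the uniform-in-parameters coercivity and boundedness of $a^{\boldsymbol\alpha}_{\omega,\delta}$ for complex $\omega$ near $0$ and small $\delta>0$; concretely, one must bound the boundary term $\delta\int_{\partial D}\boldsymbol{\mathcal M}^{\boldsymbol\alpha,\sqrt{\rho}\omega}[\boldsymbol u]\cdot\overline{\boldsymbol v}$ by $C\delta\,\|\boldsymbol u\|_{\boldsymbol H^1_{\boldsymbol\alpha,\mathrm{per}}(D)}\|\boldsymbol v\|_{\boldsymbol H^1_{\boldsymbol\alpha,\mathrm{per}}(D)}$, combining the continuity of the trace operator with the analyticity and boundedness of the DtN map from Lemma \ref{DtNasy}, so that this term and the $\mathcal O(|\omega|^2)$ volume term are dominated by the coercivity constant of $a_{0,0}$. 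Once this is secured, the Lax--Milgram step and the finite-dimensional reduction above proceed directly.
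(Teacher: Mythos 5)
Your proposal is correct and follows essentially the same route as the paper: rewriting \eqref{int:varia} so that the right-hand side is the rank-$d$ functional built from $\int_D\boldsymbol u\cdot\boldsymbol e_i\,\mathrm{d}\boldsymbol x$, invoking Lax--Milgram (with coercivity of $a^{\boldsymbol\alpha}_{\omega,\delta}$ inherited from $a_{0,0}$ by perturbation) to force $\boldsymbol u=\sum_j c_j\boldsymbol w^{\boldsymbol\alpha}_j$, and testing against $\boldsymbol e_i$ to reduce to the system \eqref{non:h} and hence the determinant condition \eqref{E:unique}. Your version is in fact slightly more explicit than the paper's in spelling out both implications and the nontriviality check, but the underlying argument is the same.
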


\begin{proof}
The variational problem \eqref{int:varia} can be equivalently rewritten as
\begin{align*}
a^{\boldsymbol \alpha}_{\omega,\delta}(\boldsymbol{u},\boldsymbol{v})-\sum_{i=1}^d\left(\int_{D}\boldsymbol{u}
\cdot\boldsymbol{e}_i\mathrm{d}\boldsymbol{x}\right)\left(\int_{D}\overline{\boldsymbol{v}}\cdot\boldsymbol{e}_i
\mathrm{d}\boldsymbol{x}\right)=0,
\end{align*}
which can also be written as 
\begin{align*}
a^{\boldsymbol \alpha}_{\omega,\delta}(\boldsymbol{u}(\omega,\delta),\boldsymbol{v})-\sum_{i=1}^d\left(\int_{D}\boldsymbol{u}(\omega,\delta)
\cdot\boldsymbol{e}_i\mathrm{d}\boldsymbol{x}\right)a^{\boldsymbol \alpha}_{\omega,\delta}(\boldsymbol{w}^{\boldsymbol \alpha}_i(\omega,\delta),\boldsymbol{v})
=0.
\end{align*}
This leads to 
\begin{align*}
\boldsymbol{u}(\omega,\delta)-\sum_{i=1}^d\left(\int_D\boldsymbol{u}(\omega,\delta)\cdot \boldsymbol{e}_{i} \mathrm{d}\boldsymbol{x}\right)\boldsymbol{w}^{\boldsymbol \alpha}_i(\omega,\delta)=0.
\end{align*}
Taking the  inner product of both  sides of the above equation with $\boldsymbol{e}_i$ in the domain $D$ yields
\begin{align*}
\int_D\boldsymbol{u}(\omega,\delta)\cdot \boldsymbol{e}_{i} \mathrm{d}\boldsymbol{x}-\sum^{d}_{j=1}\left(\int_D\boldsymbol{w}^{\boldsymbol \alpha}_j(\omega,\delta)\cdot \boldsymbol{e}_{i} 
\mathrm{d}\boldsymbol{x}\right)\left(\int_D\boldsymbol{u}(\omega,\delta)\cdot \boldsymbol{e}_{j} \mathrm{d}\boldsymbol{x}\right)=0,
\end{align*}
which is exactly \eqref{non:h}. Thus, there exists a nontrivial solution $\boldsymbol{u}(\omega,\delta)$, given by \eqref{solu:express}, to the variational problem \eqref{int:varia} if and only if the condition \eqref{E:unique} holds. 
\end{proof}

According to  Definition \ref{D:reson}, when subwavelength resonances occur, there exists a nontrivial weak solution to the variational
problem \eqref{int:varia}. By Proposition \ref{well posed}, the resonant frequencies $\omega^{\boldsymbol \alpha}(\delta)\in\mathbb{C}$ satisfy the characteristic equation
\[
\det\left(\boldsymbol{I}-\boldsymbol{P}(\omega,\delta)\right)=0.
\]
Consequently, in view of \eqref{P}, it is essential to obtain the asymptotic expansion of $\boldsymbol{w}^{\boldsymbol \alpha}_i(\omega,\delta)$, as formulated in the following lemma.

\begin{prop}
For $\omega\in\mathbb{C}$ with $ \omega \rightarrow 0$ and  $\delta\in\mathbb{R}^+$ with $ \delta\rightarrow 0$,  the solution $\boldsymbol{w}^{\boldsymbol \alpha}_i(\omega,\delta)$ to the  variational problem \eqref{wi}, for each $i=1,\cdots,d$, admits the asymptotic expansion
\begin{align}\label{wjasy}
\boldsymbol{w}^{\boldsymbol \alpha}_i(\omega,\delta)&=\frac{\chi_D\boldsymbol{e}_i}{|D|}+\omega^2\frac{\rho\tau^2\chi_D\boldsymbol{e}_i}{|D|^2}-
\delta\frac{\chi_D\sum_{k=1}^d Q^{\boldsymbol \alpha}_{ik}\boldsymbol{e}_k}{|D|^3}+\delta\widetilde{\boldsymbol{w}}_{i;0,1}
+\mathcal{O}\big((\omega^2+\delta\big)^2),
\end{align}
where $Q_{ik}^{\boldsymbol \alpha}$ is defined in \eqref{G} and $\chi_{D}$ denotes the
 characteristic function in $D$. Moreover, $\widetilde{\boldsymbol{w}}_{i;0,1}$ is the unique solution to
\begin{align}\label{wj01}
\left\{ \begin{aligned}
&-\mathcal{L}_{\lambda,\mu}\widetilde{\boldsymbol{w}}_{i;0,1}=\frac{1}{|D|^2}\sum^d_{k=1}Q^{\boldsymbol\alpha}_{ik}\boldsymbol{e}_{ik}&& \text{in } D,\\
&\partial_{\boldsymbol{\nu}}\widetilde{\boldsymbol{w}}_{i;0,1}=\frac{1}{|D|}\boldsymbol{\mathcal{M}}^{\boldsymbol\alpha,0}[\boldsymbol{e}_i]&& \text{on } \partial D,\\
&\int_D\widetilde{\boldsymbol{w}}_{i;0,1}\cdot\boldsymbol{e}_k\mathrm{d}\boldsymbol{x}=0,&& k=1,\cdots,d,\\
&e^{-\mathrm{i}\boldsymbol\alpha\cdot\boldsymbol x}\widetilde{\boldsymbol{w}}_{i;0,1}\text{ is periodic}.
\end{aligned}\right.
\end{align}
\end{prop}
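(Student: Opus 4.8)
The plan is to derive \eqref{wjasy} by a regular perturbation of the variational identity \eqref{wi} about the solvable state $\omega=0$, $\delta=0$, matching the coefficients of $\omega^2$ and $\delta$. First I would record the base case: since $a_{0,0}$ is bounded and coercive on $\boldsymbol H^1_{\boldsymbol\alpha,\mathrm{per}}(D)$, the identity $a_{0,0}(\boldsymbol w^{\boldsymbol\alpha}_{i;0,0},\boldsymbol v)=\int_D\overline{\boldsymbol v}\cdot\boldsymbol e_i\,\mathrm d\boldsymbol x$ has a unique solution, and substituting the constant field $\chi_D\boldsymbol e_i/|D|$ (for which the Lamé contributions vanish and only the rank-$d$ stabilizing term of $a_{0,0}$ survives) forces $\boldsymbol w^{\boldsymbol\alpha}_{i;0,0}=\chi_D\boldsymbol e_i/|D|$. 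Next I would justify the expansion itself: by Lemma \ref{DtNasy} and the even-in-$k$ structure coming from Lemma \ref{le:series}, $\boldsymbol{\mathcal M}^{\boldsymbol\alpha,\sqrt\rho\omega}$ is analytic in $\omega^2$ with $\boldsymbol{\mathcal M}^{\boldsymbol\alpha,\sqrt\rho\omega}=\boldsymbol{\mathcal M}^{\boldsymbol\alpha,0}+\mathcal O(\omega^2)$, so $a^{\boldsymbol\alpha}_{\omega,\delta}$ is analytic in $(\omega^2,\delta)$; moreover $a^{\boldsymbol\alpha}_{\omega,\delta}-a_{0,0}$ is bounded by $C(\omega^2+\delta)\|\boldsymbol u\|_{H^1(D)}\|\boldsymbol v\|_{H^1(D)}$ (trace theorem plus boundedness of the DtN map), hence $a^{\boldsymbol\alpha}_{\omega,\delta}$ stays uniformly coercive for small $(\omega,\delta)$, and $\boldsymbol w^{\boldsymbol\alpha}_i(\omega,\delta)$ is analytic in $(\omega^2,\delta)$ near the origin, giving
\begin{equation*}
\boldsymbol w^{\boldsymbol\alpha}_i(\omega,\delta)=\boldsymbol w^{\boldsymbol\alpha}_{i;0,0}+\omega^2\boldsymbol w^{\boldsymbol\alpha}_{i;1,0}+\delta\boldsymbol w^{\boldsymbol\alpha}_{i;0,1}+\mathcal O\big((\omega^2+\delta)^2\big).
\end{equation*}

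Substituting this into \eqref{wi} and collecting like terms (the cross term $\omega^2\delta$, arising from the $\mathcal O(\omega^2)$ part of $\boldsymbol{\mathcal M}^{\boldsymbol\alpha,\sqrt\rho\omega}$ acting on $\boldsymbol w^{\boldsymbol\alpha}_{i;0,0}$, is absorbed into the remainder) shows that
\begin{align*}
a_{0,0}(\boldsymbol w^{\boldsymbol\alpha}_{i;1,0},\boldsymbol v)&=\frac{\rho\tau^2}{|D|}\int_D\boldsymbol e_i\cdot\overline{\boldsymbol v}\,\mathrm d\boldsymbol x,\\
a_{0,0}(\boldsymbol w^{\boldsymbol\alpha}_{i;0,1},\boldsymbol v)&=\frac{1}{|D|}\int_{\partial D}\boldsymbol{\mathcal M}^{\boldsymbol\alpha,0}[\boldsymbol e_i]\cdot\overline{\boldsymbol v}\,\mathrm d\sigma(\boldsymbol x)
\end{align*}
for every $\boldsymbol v\in\boldsymbol H^1_{\boldsymbol\alpha,\mathrm{per}}(D)$. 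Comparing the first line with the base case immediately gives $\boldsymbol w^{\boldsymbol\alpha}_{i;1,0}=\frac{\rho\tau^2}{|D|}\boldsymbol w^{\boldsymbol\alpha}_{i;0,0}=\rho\tau^2\chi_D\boldsymbol e_i/|D|^2$, which is the $\omega^2$-term in \eqref{wjasy}.

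The heart of the argument is the identification of $\boldsymbol w^{\boldsymbol\alpha}_{i;0,1}$. I would write $\boldsymbol w^{\boldsymbol\alpha}_{i;0,1}=\chi_D\boldsymbol c_i+\widetilde{\boldsymbol w}_{i;0,1}$ with a constant vector $\boldsymbol c_i$ fixed so that $\int_D\widetilde{\boldsymbol w}_{i;0,1}\cdot\boldsymbol e_k\,\mathrm d\boldsymbol x=0$ for all $k$; then the stabilizing term equals $|D|\int_D\boldsymbol c_i\cdot\overline{\boldsymbol v}\,\mathrm d\boldsymbol x$, and integrating the Lamé part of $a_{0,0}$ by parts (the constant piece $\chi_D\boldsymbol c_i$ contributes nothing to the Lamé form nor to $\partial_{\boldsymbol\nu}$) turns the weak equation for $\boldsymbol w^{\boldsymbol\alpha}_{i;0,1}$ into
\begin{equation*}
\int_D\big(-\mathcal L_{\lambda,\mu}\widetilde{\boldsymbol w}_{i;0,1}+|D|\boldsymbol c_i\big)\cdot\overline{\boldsymbol v}\,\mathrm d\boldsymbol x+\int_{\partial D}\Big(\partial_{\boldsymbol\nu}\widetilde{\boldsymbol w}_{i;0,1}-\frac{1}{|D|}\boldsymbol{\mathcal M}^{\boldsymbol\alpha,0}[\boldsymbol e_i]\Big)\cdot\overline{\boldsymbol v}\,\mathrm d\sigma(\boldsymbol x)=0
\end{equation*}
for all $\boldsymbol v$. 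Testing with fields supported in the interior of $D$ gives $\mathcal L_{\lambda,\mu}\widetilde{\boldsymbol w}_{i;0,1}=|D|\boldsymbol c_i$ in $D$, after which the boundary integral forces $\partial_{\boldsymbol\nu}\widetilde{\boldsymbol w}_{i;0,1}=\frac{1}{|D|}\boldsymbol{\mathcal M}^{\boldsymbol\alpha,0}[\boldsymbol e_i]$ on $\partial D$; taking the $\boldsymbol e_k$-component of the interior relation, integrating over $D$ and using $\int_D\mathcal L_{\lambda,\mu}\boldsymbol u\cdot\boldsymbol e_k\,\mathrm d\boldsymbol x=\int_{\partial D}\partial_{\boldsymbol\nu}\boldsymbol u\cdot\boldsymbol e_k\,\mathrm d\sigma$ together with the definition \eqref{G} of $Q^{\boldsymbol\alpha}_{ik}$ yields $|D|^2(\boldsymbol c_i\cdot\boldsymbol e_k)=-Q^{\boldsymbol\alpha}_{ik}/|D|$, whence $\boldsymbol c_i=-\frac{1}{|D|^3}\sum_k Q^{\boldsymbol\alpha}_{ik}\boldsymbol e_k$ and $\widetilde{\boldsymbol w}_{i;0,1}$ solves exactly \eqref{wj01} (the interior source in \eqref{wj01} should read $\frac{1}{|D|^2}\sum_k Q^{\boldsymbol\alpha}_{ik}\boldsymbol e_k$). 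It then remains to note that \eqref{wj01} is uniquely solvable: the homogeneous Neumann Lamé problem with $\boldsymbol\alpha$-quasi-periodicity has, as already recorded for \eqref{int:neu}, only the constants $\boldsymbol e_1,\dots,\boldsymbol e_d$ in its kernel, which the normalization $\int_D\widetilde{\boldsymbol w}_{i;0,1}\cdot\boldsymbol e_k=0$ removes, while the Fredholm compatibility conditions hold because $\int_D\frac{1}{|D|^2}\sum_j Q^{\boldsymbol\alpha}_{ij}\boldsymbol e_j\cdot\boldsymbol e_k\,\mathrm d\boldsymbol x=Q^{\boldsymbol\alpha}_{ik}/|D|$ cancels $\frac{1}{|D|}\int_{\partial D}\boldsymbol{\mathcal M}^{\boldsymbol\alpha,0}[\boldsymbol e_i]\cdot\boldsymbol e_k\,\mathrm d\sigma=-Q^{\boldsymbol\alpha}_{ik}/|D|$. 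Collecting the three coefficients yields \eqref{wjasy}.

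I expect the main obstacle to be this last step — the passage from the weak identity for the $\delta$-correction to its strong form \eqref{wj01} — because one must keep precise track of the rank-$d$ stabilizing term in $a_{0,0}$ (it is exactly the isolated constant part $\chi_D\boldsymbol c_i$ that generates the interior source proportional to $Q^{\boldsymbol\alpha}$) and simultaneously verify the compatibility conditions that render the reduced Neumann problem well posed. By contrast, the analyticity bookkeeping — making sure that only $\omega^2$, not $\omega$, enters and that the remainder is genuinely $\mathcal O((\omega^2+\delta)^2)$ — is routine once one invokes the even-in-$k$ expansions of Section \ref{sec:2} and the uniform coercivity of $a^{\boldsymbol\alpha}_{\omega,\delta}$.
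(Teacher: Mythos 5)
Your proposal is correct and follows essentially the same route as the paper: a regular perturbation expansion of \eqref{wi} in powers of $\omega$ and $\delta$ using the DtN expansion of Lemma \ref{DtNasy}, identification of the leading term $\chi_D\boldsymbol e_i/|D|$ and the $\omega^2$ correction, and the decomposition of the $\delta$-coefficient into a constant multiple of $\chi_D$ (fixed by testing against $\boldsymbol e_k$, which produces the $Q^{\boldsymbol\alpha}_{ik}$ terms) plus $\widetilde{\boldsymbol w}_{i;0,1}$ solving \eqref{wj01}. The only cosmetic differences are that the paper expands in all powers of $\omega$ and checks that the odd coefficients vanish instead of invoking evenness in $\omega$, and you correctly observe that the source term written as $\boldsymbol e_{ik}$ in \eqref{wj01} should read $\boldsymbol e_k$.
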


\begin{proof}
Using the variational formulation \eqref{wi} satisfied by $\boldsymbol{w}^{\boldsymbol \alpha}_i(\omega,\delta)$, the following system holds
\begin{align}\label{wjPDE}
\left\{ \begin{aligned}
&-\mathcal{L}_{\lambda,\mu}\boldsymbol{w}^{\boldsymbol \alpha}_{i}(\omega,\delta)+\sum^{d}_{i=1}\left(\int_D\boldsymbol{w}^{\boldsymbol \alpha}_i(\omega,\delta)\cdot \boldsymbol{e}_j\mathrm{d}\boldsymbol{x}\right)\boldsymbol{e}_j-\rho\tau^2\omega^2\boldsymbol{w}^{\boldsymbol \alpha}_{i}(\omega,\delta)
=\boldsymbol{e}_i&& \text{in } D,\\
&\partial_{\boldsymbol{\nu}}\boldsymbol{w}^{\boldsymbol \alpha}_i(\omega,\delta)=\delta\boldsymbol{\mathcal{M}}^{\boldsymbol\alpha,\sqrt{\rho}\omega}[\boldsymbol{w}^{\boldsymbol \alpha}_i(\omega,\delta)]&&\text{on }\partial D.
\end{aligned}\right.
\end{align}
Since $\boldsymbol{w}^{\boldsymbol \alpha}_i(\omega,\delta)$ is analytic in  $\omega$ and $\delta$, we can expand it as the power series
\begin{align}\label{wjpk}
\boldsymbol{w}^{\boldsymbol \alpha}_i(\omega,\delta)=\sum^{+\infty}_{l,k=0}\omega^{l}\delta^{k}\boldsymbol{w}_{i;l,k},
\end{align}
which converges as a mapping from $\boldsymbol H^1_{\boldsymbol\alpha,\mathrm{per}}(D)$.
 Moreover, Lemma \ref{DtNasy} ensures the existence of a sequence of operators $(\boldsymbol{\mathcal{M}}^{\boldsymbol \alpha}_{m})_{m=1}^{+\infty}$, allowing the DtN operator $\boldsymbol{\mathcal{M}}^{\boldsymbol\alpha,\sqrt{\rho}\omega}$ to be expressed as the power series 
\begin{align}\label{Se:DtN}
\boldsymbol{\mathcal{M}}^{\boldsymbol\alpha,\sqrt{\rho}\omega}=\sum_{m=0}^{+\infty}\rho^{m}\omega^{2m}\boldsymbol{\mathcal{M}}^{\boldsymbol\alpha}_{m},
\end{align}
which exhibits convergent from $ H^{\frac{1}{2}}(\partial D)^d$ to $\boldsymbol H^{-\frac{1}{2}}_{\boldsymbol{\alpha},\mathrm{per}}(\partial D)$. Here, $\boldsymbol{\mathcal{M}}^{\boldsymbol \alpha}_0:=\boldsymbol{\mathcal{M}}^{\boldsymbol \alpha,0}$, where  $\boldsymbol{\mathcal{M}}^{\boldsymbol \alpha,0}$ is given by \eqref{eq:M0}.

Substituting the power series expansions \eqref{wjpk} and \eqref{Se:DtN} into \eqref{wjPDE} leads to  
\begin{align*}
-\sum^{+\infty}_{l,k=0}\omega^l\delta^k\mathcal{L}_{\lambda,\mu}\boldsymbol{w}_{i;l,k}+
\sum^{+\infty}_{l,k=0}\omega^l\delta^k\sum^{d}_{j=1}\left(\int_D\boldsymbol{w}_{i;l,k}\cdot \boldsymbol{e}_j\mathrm{d}\boldsymbol{x}\right)\boldsymbol{e}_j\\
-\sum^{+\infty}_{l,k=0}\omega^l\delta^k\rho\tau^2\boldsymbol{w}_{i;l-2,k}=\boldsymbol{e}_i\quad \text{in } D,
\end{align*}
and the boundary condition
\begin{align*}
\sum^{+\infty}_{l,k=0}\omega^l\delta^k\partial_{\boldsymbol{\nu}}\boldsymbol{w}_{i;l,k}=\sum^{+\infty}_{l,k=0}\omega^l\delta^k\sum^l_{m=0}\rho^{m}\boldsymbol{\mathcal{M}}^{\boldsymbol \alpha}_m[\boldsymbol{w}_{i;l-2m,k-1}] \quad \text{on }\partial D,
\end{align*}
where we assume that $\boldsymbol{w}_{i;l,k}=0$ for $l,k<0.$  By identifying terms corresponding to each power of $\omega$ and $\delta$, we derive the following system governing the coefficients $(\boldsymbol{w}_{i;l,k})_{l,k\geq0}$:
\begin{align}\label{wjpkPDE}
\left\{ \begin{aligned}
&-\mathcal{L}_{\lambda,\mu}\boldsymbol{w}_{i;l,k}+\sum^{d}_{j=1}\left(\int_D\boldsymbol{w}_{i;l,k}\cdot \boldsymbol{e}_j\mathrm{d}\boldsymbol{x}\right)\boldsymbol{e}_j=\rho\tau^2\boldsymbol{w}_{i;l-2,k}
+\boldsymbol{e}_i\delta_{l=0}\delta_{k=0}&& \text{in } D,\\
&\partial_{\boldsymbol{\nu}}\boldsymbol{w}_{i;l,k}=\sum^l_{m=0}\rho^{m}\boldsymbol{\mathcal{M}}^{\boldsymbol \alpha}_m[\boldsymbol{w}_{i;l-2m,k-1}]&& \text{on }\partial D.
\end{aligned}\right.
\end{align}

For $l=k=0$, the function  $\boldsymbol {w}_{i; 0, 0}$
satisfies the system
\begin{align*}
\left\{ \begin{aligned}
&-\mathcal{L}_{\lambda,\mu}\boldsymbol{w}_{i;0,0}+\sum^{d}_{j=1}\left(\int_D\boldsymbol{w}_{i;0,0}\cdot \boldsymbol{e}_j\mathrm{d}\boldsymbol{x}\right)\boldsymbol{e}_j=\boldsymbol{e}_i&& \text{in } D,\\
&\partial_{\boldsymbol{\nu}}\boldsymbol{w}_{i;0,0}=0&& \text{on }\partial D.
\end{aligned}\right.
\end{align*}
It follows that
\[
 \boldsymbol{w}_{i;0,0}=\frac{\chi_D\boldsymbol{e}_i}{|D|}. 
\]

For $l=1,k=0$, the function  $\boldsymbol {w}_{i; 0, 0}$
is governed by the equation
\begin{align*}
\left\{ \begin{aligned}
&-\mathcal{L}_{\lambda,\mu}\boldsymbol{w}_{i;1,0}+\sum^{d}_{j=1}\left(\int_D\boldsymbol{w}_{i;1,0}\cdot \boldsymbol{e}_j\mathrm{d}\boldsymbol{x}\right)\boldsymbol{e}_j=0&& \text{in } D,\\
&\partial_{\boldsymbol{\nu}}\boldsymbol{w}_{i;1,0}=0&& \text{on }\partial D,
\end{aligned}\right.
\end{align*}
which implies that $\boldsymbol{w}_{i;1,0}=0$. By applying similar arguments iteratively, we obtain the following inductive relations:
\begin{align*}
\boldsymbol{w}_{i;2l,0}=\frac{\rho^{l}\tau^{2l}\chi_D\boldsymbol{e}_i}{|D|^{l+1}},\quad \boldsymbol{w}_{i;2l+1,0}=0,\quad  l=0,1,\cdots.
\end{align*}

For $l=0$ and $k=1$, the system \eqref{wjpkPDE} leads to
\begin{align*}
\left\{ \begin{aligned}
&-\mathcal{L}_{\lambda,\mu}\boldsymbol{w}_{i;0,1}+\sum^{d}_{j=1}\left(\int_D\boldsymbol{w}_{i;0,1}\cdot \boldsymbol{e}_j\mathrm{d}\boldsymbol{x}\right)\boldsymbol{e}_j=0&& \text{in } D,\\
&\partial_{\boldsymbol{\nu}}\boldsymbol{w}_{i;0,1}=\frac{1}{|D|}\boldsymbol{\mathcal{M}}^{\boldsymbol\alpha,0}[\boldsymbol{e}_i]&& \text{on }\partial D. 
\end{aligned}\right.
\end{align*}
The variational formulation of this equation is given by
\begin{align*}
&\lambda\int_{D}(\nabla\cdot\boldsymbol{w}_{i;0,1})(\nabla\cdot\overline{\boldsymbol{v}})\mathrm{d}\boldsymbol{x}
+\frac{\mu}{2}\int_{D}(\nabla\boldsymbol{w}_{i;0,1}
+\nabla\boldsymbol{w}_{i;0,1}^\top):(\nabla\overline{\boldsymbol{v}}+\nabla\overline{\boldsymbol{v}}^\top)
\mathrm{d}\boldsymbol{x}\nonumber\\
&\quad +\sum^d_{j=1}\int_D\boldsymbol{w}_{i;0,1}
\cdot\boldsymbol{e}_j\mathrm{d}\boldsymbol{x}\int_D\boldsymbol{e}_j\cdot\overline{\boldsymbol{v}}\mathrm{d}\boldsymbol{x}-\frac{1}{|D|}\int_{\partial D}\boldsymbol{\mathcal{M}}^{\boldsymbol \alpha,0}[\boldsymbol{e}_i]\cdot\overline{\boldsymbol{v}}\mathrm{d}\sigma(\boldsymbol{x})=0,
\end{align*}
for all $\boldsymbol{v}\in \boldsymbol H^1_{\boldsymbol \alpha, \mathrm{per}}(D)$. By choosing $\boldsymbol{v}=\boldsymbol{e}_k$, we obtain
\begin{align*}
|D|\int_{ D}\boldsymbol{w}_{i;0,1}\cdot\boldsymbol{e}_k\mathrm{d}\boldsymbol{x}=-\frac{Q^{\boldsymbol\alpha}_{ik}}{|D|},\quad k=1,\dots,d.
\end{align*}
This leads to the explicit expression
\begin{align*} \boldsymbol{w}_{i;0,1}=-\frac{\chi_D\sum_{k=1}^dQ^{\boldsymbol \alpha}_{ik}\boldsymbol{e}_k}{|D|^3}+\widetilde{\boldsymbol{w}}_{i;0,1},
\end{align*}
where $\widetilde{\boldsymbol{w}}_{j;0,1}$ satisfies \eqref{wj01}.

For $l=1$ and $k=1$, the function $\boldsymbol{w}_{i;1,1}$  satisfies the system
\begin{align*}
\left\{ \begin{aligned}
&-\mathcal{L}_{\lambda,\mu}\boldsymbol{w}_{i;1,1}+\sum^{d}_{j=1}\left(\int_D\boldsymbol{w}_{i;1,1}\cdot \boldsymbol{e}_j\mathrm{d}\boldsymbol{x}\right)\boldsymbol{e}_j=0&&\text{in } D,\\
&\partial_{\boldsymbol{\nu}} \boldsymbol{w}_{i;1,1}=0&& \text{on }\partial D.
\end{aligned}\right.
\end{align*}
It follows that $\boldsymbol{w}_{i;1,1}=0$.
 
By substituting the previously derived results into the power series expansion \eqref{wjpk}, we obtain the asymptotic expression \eqref{wjasy}, thereby completing the proof.
\end{proof}

From \eqref{P} and \eqref{wjasy}, we obtain
\begin{align*}
\boldsymbol{I}-\boldsymbol{P}(\omega,\delta)=-\frac{\rho\tau^2\omega^2}{|D|}\boldsymbol{I}+\frac{\delta}{|D|^2}\boldsymbol{Q}^{\boldsymbol\alpha}+\mathcal{O}((\omega^2+\delta)^2).
\end{align*}
Next, we demonstrate that the matrix $\boldsymbol {Q^\alpha}$ is Hermitian and positive definite. The proof relies on the following two key lemmas.

\begin{lemm}\label{le:ei}
Assume that $\boldsymbol\alpha\neq 0$, and let $\boldsymbol{\varphi}\in H^{-\frac{1}{2}}(\partial D)^d$ satisfy $\boldsymbol{\mathcal{S}}_{D}^{\boldsymbol\alpha,0}[\boldsymbol{\varphi}]=\boldsymbol e_i $ on $\partial D$. Then, it follows that
\begin{align*}
\big(\frac{1}{2}\boldsymbol{\mathcal{I}}+(\boldsymbol{\mathcal{K}}^{-\boldsymbol\alpha,0}_{D})^*\big)\big[(\boldsymbol{\mathcal{S}}^{\boldsymbol\alpha,0}_{D})^{-1}[\boldsymbol e_i]\big]=(\boldsymbol{\mathcal{S}}^{\boldsymbol\alpha,0}_{D})^{-1}[\boldsymbol e_i]\quad\text{on }{\partial D}.
\end{align*}
\end{lemm}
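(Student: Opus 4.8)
The plan is to exploit the jump relations \eqref{quasi-jump} together with the elementary fact that a constant vector field solves the static Lam\'e system. Set $\boldsymbol\varphi:=(\boldsymbol{\mathcal{S}}^{\boldsymbol\alpha,0}_{D})^{-1}[\boldsymbol e_i]\in H^{-\frac12}(\partial D)^d$, which is well defined because $\boldsymbol\alpha\neq 0$ (Lemma \ref{le:IS-series}), and consider the quasi-periodic single-layer potential $\boldsymbol u:=\boldsymbol{\widetilde{\mathcal{S}}}^{\boldsymbol\alpha,0}_{D}[\boldsymbol\varphi]$ on all of $\mathbb R^d$. Since $\overline D\subset Y$, for $\boldsymbol x\in D$ and $\boldsymbol y\in\partial D$ the kernel $\boldsymbol G^{\boldsymbol\alpha,0}(\boldsymbol x-\boldsymbol y)$ is singular only through its $\boldsymbol n=0$ term, which is the ordinary fundamental solution $\boldsymbol G^{0}$; hence $\boldsymbol u$ is $\boldsymbol\alpha$-quasi-periodic, satisfies $\mathcal{L}_{\lambda,\mu}\boldsymbol u=0$ in $D$, and has trace $\boldsymbol u|_{\partial D}=\boldsymbol{\mathcal{S}}^{\boldsymbol\alpha,0}_{D}[\boldsymbol\varphi]=\boldsymbol e_i$.

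Next I would observe that the constant field $\boldsymbol e_i$ itself solves $\mathcal{L}_{\lambda,\mu}\boldsymbol e_i=0$ in $D$ with the same Dirichlet datum $\boldsymbol e_i$ on $\partial D$. Because the interior Dirichlet problem for the static Lam\'e operator is uniquely solvable --- this follows from the coercivity furnished by Korn's inequality under the strong convexity conditions \eqref{convexity}, exactly as in the argument used in the proof of Lemma \ref{le:IS-series} --- we conclude $\boldsymbol u\equiv\boldsymbol e_i$ in $D$. Since $\nabla\boldsymbol e_i=0$, the interior conormal derivative vanishes:
\[
\partial_{\boldsymbol\nu}\boldsymbol u|_-=\lambda(\nabla\cdot\boldsymbol e_i)\boldsymbol\nu+\mu(\nabla\boldsymbol e_i+\nabla\boldsymbol e_i^\top)\boldsymbol\nu=0\quad\text{on }\partial D.
\]

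Finally, the jump relation \eqref{quasi-jump} at $k=0$ gives $\partial_{\boldsymbol\nu}\boldsymbol u|_-=\big(-\frac12\boldsymbol{\mathcal{I}}+(\boldsymbol{\mathcal{K}}^{-\boldsymbol\alpha,0}_{D})^*\big)[\boldsymbol\varphi]$, so the vanishing just established forces $(\boldsymbol{\mathcal{K}}^{-\boldsymbol\alpha,0}_{D})^*[\boldsymbol\varphi]=\frac12\boldsymbol\varphi$. Adding $\frac12\boldsymbol\varphi$ to both sides yields $\big(\frac12\boldsymbol{\mathcal{I}}+(\boldsymbol{\mathcal{K}}^{-\boldsymbol\alpha,0}_{D})^*\big)[\boldsymbol\varphi]=\boldsymbol\varphi=(\boldsymbol{\mathcal{S}}^{\boldsymbol\alpha,0}_{D})^{-1}[\boldsymbol e_i]$, which is precisely the claimed identity; recalling \eqref{eq:M0}, it is equivalent to the statement $\boldsymbol{\mathcal{M}}^{\boldsymbol\alpha,0}[\boldsymbol e_i]=(\boldsymbol{\mathcal{S}}^{\boldsymbol\alpha,0}_{D})^{-1}[\boldsymbol e_i]$.

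The only delicate point is the first step, namely verifying that $\boldsymbol{\widetilde{\mathcal{S}}}^{\boldsymbol\alpha,0}_{D}[\boldsymbol\varphi]$ genuinely solves the homogeneous static Lam\'e equation in $D$ and attains the boundary trace $\boldsymbol e_i$; this relies on $\overline D\subset Y$ so that near $\partial D$ the quasi-periodic Green's function differs from $\boldsymbol G^{0}$ by a smooth term, after which the classical single-layer mapping, trace, and jump properties apply verbatim. The hypothesis $\boldsymbol\alpha\neq 0$ is indispensable, since it is exactly what makes $\boldsymbol G^{\boldsymbol\alpha,0}$ and the inverse $(\boldsymbol{\mathcal{S}}^{\boldsymbol\alpha,0}_{D})^{-1}$ meaningful; everything else is routine bookkeeping.
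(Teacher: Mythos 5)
Your proposal is correct and follows essentially the same route as the paper: identify $\boldsymbol{\widetilde{\mathcal{S}}}^{\boldsymbol\alpha,0}_{D}[(\boldsymbol{\mathcal{S}}^{\boldsymbol\alpha,0}_{D})^{-1}[\boldsymbol e_i]]$ with the constant field $\boldsymbol e_i$ in $D$ via uniqueness of the interior Dirichlet problem, note that $\partial_{\boldsymbol\nu}\boldsymbol e_i|_-=0$, and apply the interior jump relation \eqref{quasi-jump} to obtain $\big(-\frac12\boldsymbol{\mathcal{I}}+(\boldsymbol{\mathcal{K}}^{-\boldsymbol\alpha,0}_{D})^*\big)[\boldsymbol\varphi]=0$, hence the claimed identity. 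Your extra remarks on the smoothness of $\boldsymbol G^{\boldsymbol\alpha,0}-\boldsymbol G^{0}$ near the diagonal just make explicit what the paper invokes when asserting the standard jump relations for the quasi-periodic layer potential.
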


\begin{proof}
Consider the following Dirichlet boundary value problem:
\begin{align*}
\left\{ \begin{aligned}
&\mathcal{L}_{\lambda,\mu}\boldsymbol{u}=0&&\text{in }D,\\
&\boldsymbol{u}=\boldsymbol e_i&&\text{on }\partial D,\\
&e^{-\mathrm{i}\boldsymbol\alpha\cdot\boldsymbol x}\boldsymbol{u}\quad\text{is periodic}.
\end{aligned}\right.
 \end{align*}
There exists a unique solution $\boldsymbol{u}=\boldsymbol e_i=\widetilde{\boldsymbol{\mathcal{S}}}_{D}^{\boldsymbol\alpha,0}[\boldsymbol{\varphi}]$ in $D$.  Since the operator $\boldsymbol{\mathcal{S}}^{\boldsymbol{\alpha},0}_{D}:H^{-\frac{1}{2}}(\partial D)^d\rightarrow \boldsymbol H^{\frac{1}{2}}_{\alpha, \mathrm {per}}(\partial D)$ is invertible, it follows that $\boldsymbol{\varphi}=(\boldsymbol{\mathcal{S}}^{\boldsymbol\alpha,0}_{D})^{-1}[\boldsymbol e_i]$ on $\partial D$. Furthermore, applying the jump relations from \eqref{quasi-jump}, we obtain
\begin{align*}
\big(-\frac{1}{2}\boldsymbol{\mathcal{I}}+(\boldsymbol{\mathcal{K}}^{-\boldsymbol\alpha,0}_{D})^*\big)[\boldsymbol{\varphi}]=\partial_{\boldsymbol\nu}\boldsymbol e_i|_-=0\quad\text{on }{\partial D},
\end{align*}
which implies that 
\begin{align*}
\big(-\frac{1}{2}\boldsymbol{\mathcal{\mathcal{I}}}+(\boldsymbol{\mathcal{K}}^{-\boldsymbol\alpha,0}_{D})^*\big)\big[(\boldsymbol{\mathcal{S}}^{\boldsymbol\alpha,0}_{D})^{-1}[\boldsymbol e_i]\big]=0\quad\text{on }{\partial D}.
\end{align*}
Thus, we conclude that
\begin{align*}
 \big(\frac{1}{2}\boldsymbol{\mathcal{I}}+(\boldsymbol{\mathcal{K}}^{-\boldsymbol\alpha,0}_{D})^*\big)\big[(\boldsymbol{\mathcal{S}}^{\boldsymbol\alpha,0}_{D})^{-1}[\boldsymbol e_i]\big]=(\boldsymbol{\mathcal{S}}^{\boldsymbol\alpha,0}_{D})^{-1}[\boldsymbol e_i]\quad\text{on }{\partial D},
\end{align*}
which completes the proof.
\end{proof}

\begin{lemm}\label{Inver}
For $\boldsymbol\alpha\neq 0$, the operator $-\boldsymbol{\mathcal{S}}^{\boldsymbol{\alpha},0}_{D}$ is positive definite.
\end{lemm}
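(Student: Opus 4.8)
The plan is to evaluate the real quadratic form $\boldsymbol\varphi\mapsto -\int_{\partial D}\boldsymbol{\mathcal{S}}^{\boldsymbol\alpha,0}_{D}[\boldsymbol\varphi]\cdot\overline{\boldsymbol\varphi}\,\mathrm{d}\sigma$ for $\boldsymbol\varphi\in H^{-\frac12}(\partial D)^d$ by an energy identity, and to show it is real, nonnegative, and strictly positive unless $\boldsymbol\varphi=0$. The key device is the quasi-periodic single-layer potential $\boldsymbol u:=\boldsymbol{\widetilde{\mathcal{S}}}^{\boldsymbol\alpha,0}_{D}[\boldsymbol\varphi]$. By the mapping properties of the single-layer potential and the defining equation of the quasi-periodic Green's function, $\boldsymbol u$ lies in $\boldsymbol H^1_{\boldsymbol\alpha,\mathrm{per}}(D)$ and in $\boldsymbol H^1_{\boldsymbol\alpha,\mathrm{per}}(Y\backslash\overline D)$, solves $\mathcal{L}_{\lambda,\mu}\boldsymbol u=0$ in $D$ and in $Y\backslash\overline D$, is continuous across $\partial D$ with $\boldsymbol u|_{\partial D}=\boldsymbol{\mathcal{S}}^{\boldsymbol\alpha,0}_{D}[\boldsymbol\varphi]$, and, by the jump relation \eqref{quasi-jump} at $k=0$, has the traction jump $\partial_{\boldsymbol\nu}\boldsymbol u|_+-\partial_{\boldsymbol\nu}\boldsymbol u|_-=\boldsymbol\varphi$ on $\partial D$.

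First I would apply integration by parts (the first Betti identity for $\mathcal{L}_{\lambda,\mu}$, in the form underlying \eqref{int:varia}) on $D$ with test field $\boldsymbol u$ itself: since $\mathcal{L}_{\lambda,\mu}\boldsymbol u=0$ in $D$, this gives $\int_{\partial D}\partial_{\boldsymbol\nu}\boldsymbol u|_-\cdot\overline{\boldsymbol u}\,\mathrm{d}\sigma=e_{D}(\boldsymbol u)$, where $e_{O}(\boldsymbol u):=\int_{O}\big(\lambda|\nabla\cdot\boldsymbol u|^2+\frac{\mu}{2}|\nabla\boldsymbol u+\nabla\boldsymbol u^\top|^2\big)\,\mathrm{d}\boldsymbol x$ denotes the elastic energy on $O$. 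Next I would apply the same identity on $Y\backslash\overline D$, whose boundary is $\partial D$ — with outward unit normal $-\boldsymbol\nu$ relative to $Y\backslash\overline D$ — together with $\partial Y$; by Lemma \ref{Y0} the contribution $\int_{\partial Y}\partial_{\boldsymbol\gamma}\boldsymbol u\cdot\overline{\boldsymbol u}\,\mathrm{d}\sigma$ vanishes because $\boldsymbol u$ is $\boldsymbol\alpha$-quasi-periodic, so $\int_{\partial D}\partial_{\boldsymbol\nu}\boldsymbol u|_+\cdot\overline{\boldsymbol u}\,\mathrm{d}\sigma=-e_{Y\backslash\overline D}(\boldsymbol u)$. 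Subtracting the two identities, using $\boldsymbol u|_+=\boldsymbol u|_-$ and the traction jump, and observing that the resulting boundary integral is real, I obtain
\begin{align*}
-\int_{\partial D}\boldsymbol{\mathcal{S}}^{\boldsymbol\alpha,0}_{D}[\boldsymbol\varphi]\cdot\overline{\boldsymbol\varphi}\,\mathrm{d}\sigma=e_{D}(\boldsymbol u)+e_{Y\backslash\overline D}(\boldsymbol u)\geq 0,
\end{align*}
so the form is real and nonnegative. This step is exactly where quasi-periodicity (through Lemma \ref{Y0}) substitutes for Rellich's lemma, which is unavailable in the periodic setting.

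For the strict positivity — the main obstacle — suppose the right-hand side above vanishes. By the strong-ellipticity estimate \eqref{str:ell}, valid under the convexity assumption \eqref{convexity} (as established in the coercivity analysis of $a_{0,0}$), each energy dominates $\sum_{i,j}\int|\mathcal{E}_{ij}(\boldsymbol u)|^2$, so the symmetric gradient $(\mathcal{E}_{ij}(\boldsymbol u))$ vanishes identically on $D$ and on $Y\backslash\overline D$; hence $\boldsymbol u$ is an infinitesimal rigid displacement $\boldsymbol A\boldsymbol x+\boldsymbol b$ on each of these connected regions. Extending $\boldsymbol u$ quasi-periodically to the connected set $\mathbb{R}^d\backslash\overline\Omega$ and matching the linear and constant parts in the relation $\boldsymbol u(\boldsymbol x+\boldsymbol e_j)=e^{\mathrm{i}\alpha_j}\boldsymbol u(\boldsymbol x)$ forces $(1-e^{\mathrm{i}\alpha_j})\boldsymbol A=0$ and $\boldsymbol A\boldsymbol e_j=(e^{\mathrm{i}\alpha_j}-1)\boldsymbol b$ for every $j$; since $\boldsymbol\alpha\neq 0$ there is an index $j$ with $e^{\mathrm{i}\alpha_j}\neq 1$, whence $\boldsymbol A=0$ and then $\boldsymbol b=0$, i.e. $\boldsymbol u\equiv 0$ in $Y\backslash\overline D$. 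Continuity across $\partial D$ then gives $\boldsymbol u|_{\partial D}=0$, and a rigid displacement on $D$ vanishing on $\partial D$ is identically zero, so $\boldsymbol u\equiv 0$ in $Y$, and therefore $\boldsymbol\varphi=\partial_{\boldsymbol\nu}\boldsymbol u|_+-\partial_{\boldsymbol\nu}\boldsymbol u|_-=0$. Thus $-\boldsymbol{\mathcal{S}}^{\boldsymbol\alpha,0}_{D}$ is positive definite; since it is moreover an isomorphism from $H^{-\frac12}(\partial D)^d$ onto $\boldsymbol H^{\frac12}_{\alpha,\mathrm{per}}(\partial D)$ by Lemma \ref{le:IS-series}, a coercivity bound $-\int_{\partial D}\boldsymbol{\mathcal{S}}^{\boldsymbol\alpha,0}_{D}[\boldsymbol\varphi]\cdot\overline{\boldsymbol\varphi}\,\mathrm{d}\sigma\gtrsim\|\boldsymbol\varphi\|_{H^{-\frac12}(\partial D)^d}^2$ follows as well. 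I expect the rigidity argument to be the only genuinely delicate point: it is precisely here that the hypothesis $\boldsymbol\alpha\neq 0$ is indispensable, since for $\boldsymbol\alpha=0$ the constant fields $\boldsymbol e_1,\dots,\boldsymbol e_d$ would lie in the kernel of the form and $-\boldsymbol{\mathcal{S}}^{\boldsymbol\alpha,0}_{D}$ would be only positive semi-definite.
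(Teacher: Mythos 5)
Your proof is correct and follows essentially the same route as the paper: the energy identity obtained by integrating by parts over $D$ and $Y\backslash\overline{D}$, with Lemma \ref{Y0} annihilating the $\partial Y$ contribution, followed by the strong-ellipticity bound \eqref{str:ell} to force the strain of $\boldsymbol{\widetilde{\mathcal{S}}}^{\boldsymbol\alpha,0}_{D}[\boldsymbol\varphi]$ to vanish. The only (minor) difference is at the kernel step: the paper simply observes that a strain-free field has vanishing tractions on both sides of $\partial D$, so the jump relation gives $\boldsymbol\varphi=0$ directly, whereas you additionally exploit the quasi-periodicity (where $\boldsymbol\alpha\neq 0$ enters) to conclude $\boldsymbol u\equiv 0$ first --- both conclusions are valid.
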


\begin{proof}
By Lemma \ref{Y0} and integration by parts, we have
\begin{align*}
&-\langle \boldsymbol{\mathcal{S}}^{\boldsymbol\alpha,0}_{D}[\boldsymbol\varphi],
\boldsymbol\varphi\rangle_{\partial D}
=\langle \boldsymbol{\mathcal{S}}^{\boldsymbol\alpha,0}_{D}[\boldsymbol\varphi], \partial_{\boldsymbol\nu} \boldsymbol{\widetilde{\mathcal{S}}}^{\boldsymbol\alpha,0}_{D}[\boldsymbol\varphi]|_{-}\rangle_{\partial D}-\langle \boldsymbol{\mathcal{S}}^{\boldsymbol\alpha,0}_{D}[\boldsymbol\varphi], \partial_{\boldsymbol\nu} \boldsymbol{\widetilde{\mathcal{S}}}^{\boldsymbol\alpha,0}_{D}[\boldsymbol\varphi]|_{+}\rangle_{\partial D}\\
& =\int_{\partial D}\boldsymbol{\mathcal{S}}^{\boldsymbol\alpha,0}_{D}[\boldsymbol\varphi]
\cdot\overline{\partial_{\boldsymbol\nu}{\boldsymbol{\widetilde{\mathcal{S}}}^{\boldsymbol\alpha,0}_{D}[\boldsymbol\varphi]}}|_
-\mathrm{d}\sigma(\boldsymbol{x})
-\int_{\partial D}\boldsymbol{\mathcal{S}}^{\boldsymbol\alpha,0}_{D}[\boldsymbol\varphi]\cdot \overline{\partial_{\boldsymbol\nu} \boldsymbol{\widetilde{\mathcal{S}}}^{\boldsymbol\alpha,0}_{D}[\boldsymbol\varphi]}|_+\mathrm{d}\sigma(\boldsymbol{x})\\
&\quad +\int_{\partial Y}\boldsymbol{\mathcal{S}}^{\boldsymbol\alpha,0}_{D}[\boldsymbol\varphi]\cdot\overline{\partial_{\boldsymbol\gamma} \boldsymbol{\widetilde{\mathcal{S}}}^{\boldsymbol\alpha,0}_{D}[\boldsymbol\varphi]}|_{-}\mathrm{d}\sigma(\boldsymbol x)\\
& =\int_{D}\overline{\mathcal{L}_{\lambda,\mu}\boldsymbol{\widetilde{\mathcal{S}}}^{\boldsymbol\alpha,0}_{D}[\boldsymbol\varphi]}\cdot \boldsymbol{\widetilde{\mathcal{S}}}^{\boldsymbol\alpha,0}_{D}[\boldsymbol \varphi]\mathrm{d}\boldsymbol{x}
+E_{D}(\boldsymbol{\widetilde{\mathcal{S}}}^{\boldsymbol\alpha,0}_{D}
[\boldsymbol\varphi],\overline{\boldsymbol{\widetilde{\mathcal{S}}}^{\boldsymbol\alpha,0}_{D}
[\boldsymbol\varphi]})\\
&\quad +\int_{Y\backslash \overline{D}}\overline{\mathcal{L}_{\lambda,\mu}\boldsymbol{\widetilde{\mathcal{S}}}^{\boldsymbol\alpha,0}_{D}[\boldsymbol\varphi]}\cdot \boldsymbol{\widetilde{\mathcal{S}}}^{\boldsymbol\alpha,0}_{D}\mathrm{d}\boldsymbol x+
E_{Y\backslash \overline{D}}(\boldsymbol{\widetilde{\mathcal{S}}}^{\boldsymbol\alpha,0}_{D}[\boldsymbol\varphi],\overline{\boldsymbol{\widetilde{\mathcal{S}}}^{\boldsymbol\alpha,0}_{D}[\boldsymbol\varphi]})\\
& =E_{D}(\boldsymbol{\widetilde{\mathcal{S}}}^{\boldsymbol\alpha,0}_{D}
[\boldsymbol\varphi],\overline{\boldsymbol{\widetilde{\mathcal{S}}}^{\boldsymbol\alpha,0}_{D}
[\boldsymbol\varphi]})+E_{Y\backslash \overline{D}}(\boldsymbol{\widetilde{\mathcal{S}}}^{\boldsymbol\alpha,0}_{D}[\boldsymbol\varphi],\overline{\boldsymbol{\widetilde{\mathcal{S}}}^{\boldsymbol\alpha,0}_{D}[\boldsymbol\varphi]}),
\end{align*}
where
\begin{align*}
E_{\mathscr D}(\boldsymbol u,\boldsymbol v)=\int_{\mathscr D}\hat\lambda(\nabla\cdot \boldsymbol u)(\nabla\cdot \boldsymbol v)
+\frac{\hat\mu}{2}(\nabla  \boldsymbol u+\nabla \boldsymbol u^\top):(\nabla \boldsymbol v+\nabla \boldsymbol v^\top)\mathrm{d}\boldsymbol x.
\end{align*}
If $-\langle \boldsymbol{\mathcal{S}}^{\boldsymbol\alpha,0}_{D}[\boldsymbol\varphi],\boldsymbol\varphi\rangle_{\partial D}=0$, then
\[
0=-\langle \boldsymbol{\mathcal{S}}^{\boldsymbol\alpha,0}_{D}[\boldsymbol\varphi],\boldsymbol\varphi\rangle_{\partial D}\geq C\sum_{i,j=1}^{d}\|\mathcal{E}_{ij}(\boldsymbol{\widetilde{\mathcal{S}}}^{\boldsymbol\alpha,0}_{D}[\boldsymbol\varphi])\|_{L^{2}(Y)}^{2}\geq 0,
\]
which implies that  $\mathcal{E}_{ij}(\boldsymbol{\widetilde{\mathcal{S}}}^{\boldsymbol\alpha,0}_{D}[\boldsymbol\varphi])=0$ in $Y$. Consequently, $\boldsymbol{\widetilde{\mathcal{S}}}^{\boldsymbol\alpha,0}_{D}[\boldsymbol\varphi]$ must be constant in $D$ and $Y\backslash \overline{D}$.  Thus, we obtain $\partial_{\boldsymbol\nu}\boldsymbol{\mathcal{S}}^{\boldsymbol\alpha,0}_{D}[\boldsymbol\varphi]|_{\pm}=0$ on $\partial D$. Using the jump conditions in \eqref{quasi-jump}, we further deduce
\[
\boldsymbol\varphi=\partial_{\boldsymbol\nu} \boldsymbol{\widetilde{\mathcal{S}}}^{\boldsymbol\alpha,0}_{D}[\boldsymbol\varphi]|_+-\partial_{\boldsymbol\nu} \boldsymbol{\widetilde{\mathcal{S}}}^{\boldsymbol\alpha,0}_{D}[\boldsymbol\varphi]|_-=0.
\]
This establishes that $-\boldsymbol{\mathcal{S}}^{\boldsymbol\alpha,0}_{D}$ is positive definite.
\end{proof}

\begin{lemm}\label{G:symmetric}
For $\boldsymbol \alpha\neq0$, the matrix $\boldsymbol{Q}^{\boldsymbol\alpha}$ is Hermitian and positive definite.
\end{lemm}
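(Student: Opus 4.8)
The plan is to realize $\boldsymbol Q^{\boldsymbol\alpha}$ as the Gram matrix of the quasi-periodic Lam\'e-harmonic extensions of the constant fields $\boldsymbol e_1,\dots,\boldsymbol e_d$, measured in the elastic energy of $Y\setminus\overline D$. For $i=1,\dots,d$, let $\boldsymbol h_i\in\boldsymbol H^{1}_{\boldsymbol\alpha,\mathrm{per}}(Y\setminus\overline D)$ be the unique solution (well-posed as in Definition \ref{DtN:def} and Lemma \ref{exter:real}) of $\mathcal L_{\lambda,\mu}\boldsymbol h_i=0$ in $Y\setminus\overline D$ with $\boldsymbol h_i=\boldsymbol e_i$ on $\partial D$, so that $\boldsymbol{\mathcal M}^{\boldsymbol\alpha,0}[\boldsymbol e_i]=\partial_{\boldsymbol\nu}\boldsymbol h_i|_+$ and hence, by \eqref{G}, $Q^{\boldsymbol\alpha}_{ij}=-\int_{\partial D}\partial_{\boldsymbol\nu}\boldsymbol h_i|_+\cdot\boldsymbol e_j\,\mathrm d\sigma$. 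I would then apply the first Betti (Green) identity for $\mathcal L_{\lambda,\mu}$ on $Y\setminus\overline D$ to the pair $(\boldsymbol h_i,\boldsymbol h_j)$: the volume term drops since $\mathcal L_{\lambda,\mu}\boldsymbol h_i=0$, the boundary contribution over $\partial Y$ vanishes by Lemma \ref{Y0} because $\boldsymbol h_i,\boldsymbol h_j$ are $\boldsymbol\alpha$-quasi-periodic, and the $\partial D$ contribution (with outward normal $-\boldsymbol\nu$ relative to $Y\setminus\overline D$ and $\boldsymbol h_j|_{\partial D}=\boldsymbol e_j$ real) gives
\[
Q^{\boldsymbol\alpha}_{ij}=\int_{Y\setminus\overline D}\Bigl[\lambda(\nabla\cdot\boldsymbol h_i)\overline{(\nabla\cdot\boldsymbol h_j)}+\frac{\mu}{2}\bigl(\nabla\boldsymbol h_i+\nabla\boldsymbol h_i^{\top}\bigr):\overline{\bigl(\nabla\boldsymbol h_j+\nabla\boldsymbol h_j^{\top}\bigr)}\Bigr]\mathrm d\boldsymbol x=:\mathscr E(\boldsymbol h_i,\boldsymbol h_j),
\]
where $A:\overline B:=\sum_{k,l}A_{kl}\overline{B_{kl}}$ is the Hermitian extension of the Frobenius product. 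This is precisely the integration-by-parts computation already performed in the proof of Lemma \ref{Inver}; an equivalent shortcut, using Lemma \ref{le:ei}, is $Q^{\boldsymbol\alpha}_{ij}=-\langle(\boldsymbol{\mathcal S}^{\boldsymbol\alpha,0}_D)^{-1}[\boldsymbol e_i],\boldsymbol e_j\rangle_{\partial D}=-\langle\boldsymbol\varphi_i,\boldsymbol{\mathcal S}^{\boldsymbol\alpha,0}_D[\boldsymbol\varphi_j]\rangle_{\partial D}$ with $\boldsymbol\varphi_i=(\boldsymbol{\mathcal S}^{\boldsymbol\alpha,0}_D)^{-1}[\boldsymbol e_i]$, after which the conclusion can be read off from the self-adjointness and negative definiteness of $\boldsymbol{\mathcal S}^{\boldsymbol\alpha,0}_D$ contained in Lemma \ref{Inver}.

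Granting $Q^{\boldsymbol\alpha}_{ij}=\mathscr E(\boldsymbol h_i,\boldsymbol h_j)$, the Hermitian property is immediate: since $\lambda,\mu\in\mathbb R$ the form $\mathscr E$ satisfies $\overline{\mathscr E(\boldsymbol u,\boldsymbol v)}=\mathscr E(\boldsymbol v,\boldsymbol u)$, so $\overline{Q^{\boldsymbol\alpha}_{ij}}=\mathscr E(\boldsymbol h_j,\boldsymbol h_i)=Q^{\boldsymbol\alpha}_{ji}$, i.e.\ $\boldsymbol Q^{\boldsymbol\alpha}=(\boldsymbol Q^{\boldsymbol\alpha})^{*}$. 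For positive semidefiniteness, fix $\boldsymbol\xi\in\mathbb C^{d}$ and set $\boldsymbol H:=\sum_{i=1}^{d}\overline{\xi_i}\,\boldsymbol h_i\in\boldsymbol H^{1}_{\boldsymbol\alpha,\mathrm{per}}(Y\setminus\overline D)$; then by sesquilinearity $\sum_{i,j}\overline{\xi_i}Q^{\boldsymbol\alpha}_{ij}\xi_j=\mathscr E(\boldsymbol H,\boldsymbol H)=\int_{Y\setminus\overline D}\bigl[\lambda|\nabla\cdot\boldsymbol H|^{2}+\frac{\mu}{2}|\nabla\boldsymbol H+\nabla\boldsymbol H^{\top}|^{2}\bigr]\mathrm d\boldsymbol x\gtrsim\sum_{k,l=1}^{d}\|\mathcal E_{kl}(\boldsymbol H)\|_{L^{2}(Y\setminus\overline D)}^{2}\ge0$ by the strong-convexity estimate \eqref{str:ell}.

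It remains to prove strict positivity. Suppose $\sum_{i,j}\overline{\xi_i}Q^{\boldsymbol\alpha}_{ij}\xi_j=0$; then $\mathcal E_{kl}(\boldsymbol H)=0$ a.e.\ in $Y\setminus\overline D$, and since $Y\setminus\overline D$ is connected, the rigidity characterization of the kernel of the symmetric gradient yields $\boldsymbol H(\boldsymbol x)=\boldsymbol a+\boldsymbol B\boldsymbol x$ with $\boldsymbol a\in\mathbb C^{d}$ and $\boldsymbol B\in\mathbb C^{d\times d}$ skew-symmetric. On $\partial D$ one has $\boldsymbol H=\sum_i\overline{\xi_i}\boldsymbol e_i=\overline{\boldsymbol\xi}$, a constant vector, hence $\boldsymbol B(\boldsymbol x-\boldsymbol x')=\boldsymbol 0$ for all $\boldsymbol x,\boldsymbol x'\in\partial D$; as $\partial D\in C^{2}$ bounds an open set it is not contained in any affine hyperplane, so these differences span $\mathbb R^{d}$, forcing $\boldsymbol B=\boldsymbol 0$ and $\boldsymbol H\equiv\overline{\boldsymbol\xi}$ on $Y\setminus\overline D$. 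Finally, $\boldsymbol\alpha$-quasi-periodicity of $\boldsymbol H$ imposes the trace identity $\overline{\boldsymbol\xi}=e^{\mathrm i\alpha_k}\overline{\boldsymbol\xi}$ across the opposite faces $\{x_k=0\}$ and $\{x_k=1\}$ of $\partial Y$ for each $k$; since $\boldsymbol\alpha\in[-\pi,\pi]^{d}\setminus\{\boldsymbol 0\}$ there is an index $k$ with $\alpha_k\ne0$ and thus $e^{\mathrm i\alpha_k}\ne1$, whence $\boldsymbol\xi=\boldsymbol 0$. Therefore $\sum_{i,j}\overline{\xi_i}Q^{\boldsymbol\alpha}_{ij}\xi_j>0$ for every $\boldsymbol\xi\ne\boldsymbol 0$, and $\boldsymbol Q^{\boldsymbol\alpha}$ is positive definite.

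The only genuinely substantive step is the last one: the Hermitian property and the semidefinite bound are bookkeeping built on the Betti identity, Lemma \ref{Y0}, and the ellipticity estimate \eqref{str:ell}, whereas strict positivity requires first eliminating the infinitesimal-rotation part of the rigid motion via the non-degeneracy of $\partial D$ and then invoking $\boldsymbol\alpha\ne0$ to exclude a surviving constant field. The latter is exactly where the periodic setting departs from the single-resonator case of \cite{CGLR2024}, in which constant fields are admissible and the analogous matrix is only semidefinite.
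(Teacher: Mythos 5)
Your proof is correct, and for the positive-definiteness half it takes a genuinely different route from the paper. You realize $\boldsymbol Q^{\boldsymbol\alpha}$ as the Gram matrix of the exterior elastic energy of the quasi-periodic static extensions $\boldsymbol h_i$ of $\boldsymbol e_i$: a single Betti identity on $Y\setminus\overline D$ (with the $\partial Y$ term killed by Lemma \ref{Y0}) gives $Q^{\boldsymbol\alpha}_{ij}=\mathscr E(\boldsymbol h_i,\boldsymbol h_j)$, from which Hermitian symmetry and semidefiniteness drop out simultaneously, and you close strict definiteness by the rigidity of strain-free fields (via \eqref{str:ell}), the constancy of the Dirichlet data on $\partial D$ to kill the rotational part, and $\boldsymbol\alpha\neq 0$ to kill constants. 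The paper proves Hermitian symmetry in essentially the same way (the Betti identity plus Lemma \ref{Y0}, phrased as self-adjointness of $\boldsymbol{\mathcal M}^{\boldsymbol\alpha,0}$), but for positive definiteness it stays at the operator level: Lemma \ref{le:ei} converts $Q^{\boldsymbol\alpha}_{ij}$ into $-\int_{\partial D}(\boldsymbol{\mathcal S}^{\boldsymbol\alpha,0}_D)^{-1}[\boldsymbol e_i]\cdot\boldsymbol e_j\,\mathrm d\sigma$, and the conclusion is read off from the positive definiteness of $-\boldsymbol{\mathcal S}^{\boldsymbol\alpha,0}_D$ (Lemma \ref{Inver}), whose proof contains the same energy computation you perform, just packaged through the layer potentials and the jump relations \eqref{quasi-jump}. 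What each approach buys: yours bypasses Lemmas \ref{le:ei} and \ref{Inver} entirely, works directly with the variational structure used elsewhere in Section \ref{sec:3}, and — a point in its favor — tests the quadratic form against arbitrary complex vectors $\boldsymbol\xi\in\mathbb C^d$, which is what positive definiteness of a Hermitian (non-real-symmetric) matrix actually requires, whereas the paper's computation is written only for real $\boldsymbol a$; the paper's route, on the other hand, reuses operator facts that are needed independently later (e.g.\ the reduction to $(\boldsymbol{\mathcal S}^{\boldsymbol\alpha,0}_D)^{-1}$ is exactly what drives the dilute expansion in Lemma \ref{le:dilute}). Your identification $\boldsymbol{\mathcal M}^{\boldsymbol\alpha,0}[\boldsymbol e_i]=\partial_{\boldsymbol\nu}\boldsymbol h_i|_+$ is legitimate and consistent with how the paper itself uses \eqref{eq:M0}, and the rigidity step ($\boldsymbol B=0$ from the non-degeneracy of $\partial D$, then $\boldsymbol\xi=0$ from $e^{\mathrm i\alpha_k}\neq 1$) is sound.
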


\begin{proof}
Since the operator $\mathcal{L}_{\lambda,\mu}$ is self-adjoint \cite[p.159]{Layer009},  for $\boldsymbol u$, $\boldsymbol v\in \boldsymbol H^1_{\boldsymbol{\alpha},\mathrm{per}}(Y\backslash \overline{D})$, we have
\begin{align*}
0&=\langle \mathcal{L}_{\lambda,\mu}\boldsymbol u, \boldsymbol v\rangle_{Y\backslash \overline{D}}-\langle \boldsymbol u, \mathcal{L}_{\lambda,\mu}\boldsymbol v\rangle_{Y\backslash \overline{D}}\\
&=\int_{Y\backslash \overline{D}}\mathcal{L}_{\lambda,\mu}\boldsymbol u\cdot \overline{\boldsymbol v}\mathrm{d}\boldsymbol x-\int_{Y\backslash \overline{D}} \boldsymbol u\cdot \overline{\mathcal{L}_{\lambda,\mu}\boldsymbol v}\mathrm{d}\boldsymbol x\\
&=\int_{\partial Y}\partial_{\boldsymbol\gamma} \boldsymbol u\cdot \overline{\boldsymbol v}\mathrm{d}\sigma(\boldsymbol x)-\int_{\partial D}\partial_{\boldsymbol\nu} \boldsymbol u|_+\cdot \overline{\boldsymbol v}\mathrm{d}\sigma(\boldsymbol x)-E_{Y\backslash \overline{D}}(\boldsymbol u,\overline{\boldsymbol v})\\
&\quad -\int_{\partial Y}\overline{\partial_{\boldsymbol\gamma} \boldsymbol v}\cdot \boldsymbol u\mathrm{d}\sigma(\boldsymbol x)+\int_{\partial D}\overline{\partial_{\boldsymbol\nu} \boldsymbol v}|_+\cdot \boldsymbol u\mathrm{d}\sigma(\boldsymbol x)+E_{Y\backslash \overline{D}}(\overline{\boldsymbol v},\boldsymbol u)\\
&=-\int_{\partial D}\partial_{\boldsymbol\nu} \boldsymbol u|_+\cdot \overline{\boldsymbol v}\mathrm{d}\sigma(\boldsymbol x)+\int_{\partial D}\overline{\partial_{\boldsymbol\nu} \boldsymbol v}|_+\cdot \boldsymbol u\mathrm{d}\sigma(\boldsymbol x),
\end{align*}
where we have used Lemma \ref{Y0}. By the definition of the quasi-periodic DtN map  $\boldsymbol{\mathcal{M}}^{\boldsymbol\alpha,0}$, we get
\[
\langle \boldsymbol{\mathcal{M}}^{\boldsymbol\alpha,0}[\boldsymbol u],\boldsymbol v\rangle_{\partial D}=\langle \boldsymbol u,\boldsymbol{\mathcal{M}}^{\boldsymbol\alpha,0}[\boldsymbol v]\rangle_{\partial D},
\]
which shows that $\boldsymbol{\mathcal{M}}^{\boldsymbol\alpha,0}$ is self-adjoint in the complex conjugate space. Consequently,
\begin{align*}
Q^{\boldsymbol\alpha}_{ij}=-\int_{\partial D} \boldsymbol{\mathcal{M}}^{\boldsymbol\alpha,0}[\boldsymbol e_i]\cdot\boldsymbol e_j\mathrm{d}\sigma(\boldsymbol x)=-\int_{\partial D} \boldsymbol e_i\cdot \overline{\boldsymbol{\mathcal{M}}^{\boldsymbol\alpha,0}[\boldsymbol e_j]}\mathrm{d}\sigma(\boldsymbol x)=\overline{Q^{\boldsymbol \alpha}_{ji}}.
\end{align*}
Thus, $\boldsymbol{Q}^{\boldsymbol \alpha}$ is Hermitian.

Next, we demonstrate that $\boldsymbol{Q}^{\boldsymbol \alpha}$ is positive definite.
Since the vectors $\boldsymbol{e}_i, i=1,\dots,d$ are linearly independent, it follows that for any $\boldsymbol{a}=(a_i)^d_{i=1}\in\mathbb{R}^d\backslash\{0\}$, we have $\sum_{i=1}^d a_i\boldsymbol{e}_i\neq0$. Let $\boldsymbol{\xi}_i:=(\boldsymbol{\mathcal{S}}^{\boldsymbol\alpha,0}_{D})^{-1}[\boldsymbol e_{i}]$. Then, 
\begin{align*}
\sum_{i=1}^da_i\boldsymbol{\xi}_i=(\boldsymbol{\mathcal{S}}^{\boldsymbol\alpha,0}_{D})^{-1}\bigg[\sum_{i=1}^da_i\boldsymbol{e}_i\bigg]\neq 0.
\end{align*}
This leads to
\begin{equation*}
\begin{aligned}\boldsymbol a^\top \boldsymbol{Q}^{\boldsymbol\alpha}\boldsymbol a=\sum_{i,j=1}^{d}a_{i}Q_{ij}^{\boldsymbol\alpha}a_{j}&=-\sum_{i,j=1}^{d}a_{i}\left(\int_{\partial D}(\frac{1}{2}\boldsymbol {\mathcal{I}}+(\boldsymbol{\mathcal{K}}^{\boldsymbol\alpha,0}_{D})^{\ast})(\boldsymbol{\mathcal{S}}^{\boldsymbol\alpha,0}_{D})^{-1}[\boldsymbol e_{i}]\cdot \boldsymbol e_{j}\mathrm{d}\sigma(\boldsymbol x)\right) a_{j}\\&=-\sum_{i,j=1}^{d}a_{i}\left(\int_{\partial D}(\boldsymbol{\mathcal{S}}^{\boldsymbol\alpha,0}_{D})^{-1}[\boldsymbol e_{i}]\cdot \boldsymbol e_{j}\mathrm{d}\sigma(\boldsymbol x)\right)a_{j}\\&=-\sum_{i,j=1}^{d}a_{i}\left(\int_{\partial D}\boldsymbol\xi_{i}\cdot \boldsymbol{\mathcal{S}}^{\boldsymbol\alpha,0}_{D}[\boldsymbol\xi_{j}]\mathrm{d}\sigma(\boldsymbol x)\right) a_{j}\\&=\int_{\partial D}\left(-\boldsymbol{\mathcal{S}}^{\boldsymbol\alpha,0}_{D}\right)\left(\sum_{j=1}^{d} a_{j}\boldsymbol\xi_{j}\right)\cdot\left(\sum_{i=1}^{d} a_{i}\boldsymbol\xi_{i}\right)\mathrm{d}\sigma(\boldsymbol x),
\end{aligned}
\end{equation*}
where the third equality follows from Lemma \ref{le:ei}. By Lemma \ref{Inver}, it follows that $\boldsymbol Q^{\boldsymbol \alpha}$ is positive definite.
\end{proof}

According to Lemma \ref{G:symmetric}, the eigenvalues of $\boldsymbol{Q}^{\boldsymbol\alpha}$ are real and positive.
By condition $\eqref{E:unique}$, which is satisfied by the subwavelength resonant frequencies, the leading order terms of the resonant frequencies are given by
\begin{align*}
\omega^{\pm,\boldsymbol\alpha}_{i;0}=\pm\sqrt{\frac{\beta^{\boldsymbol\alpha}_i}{\rho\tau^2|D|}}\delta^{\frac{1}{2}},
\end{align*}
where $\beta^{\boldsymbol\alpha}_i$ is an eigenvalue of $\boldsymbol{Q}^{\boldsymbol\alpha}$.
Let $\boldsymbol{h}_i(\delta)$ denote the eigenvector of $\boldsymbol{I}-\boldsymbol{P}(\omega,\delta)$. Then, the leading order term of $\boldsymbol{h}_i(\delta)$ is $\boldsymbol{h}_i$, which is the eigenvector of $\boldsymbol{Q}^{\boldsymbol\alpha}$ corresponding to the eigenvalue $\beta^{\boldsymbol \alpha}_i$.

\begin{theo}\label{theo:fre}
Assume $\boldsymbol \alpha\neq 0.$ For $\omega\in\mathbb{C}$ with $ \omega\rightarrow 0$, $\delta\in\mathbb{R}^+$ with $ \delta \rightarrow 0$, and $\epsilon\in\mathbb{R}^+$ with $ \epsilon\rightarrow 0$, the subwavelength resonant frequencies have  the following asymptotic expansions:
\begin{align}\label{total:fre}
\omega^{\pm,\boldsymbol\alpha}_i(\epsilon)=\pm\sqrt{\frac{\beta^{\boldsymbol\alpha}_i}{\rho|D|}}\epsilon^{\frac{1}{2}}+O(\epsilon^{\frac{3}{2}}).
\end{align}
\end{theo}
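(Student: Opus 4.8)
The plan is to read the resonant frequencies directly off the characteristic equation of Proposition~\ref{well posed}. By that proposition, $\omega$ is a subwavelength resonant frequency if and only if $\det(\boldsymbol I-\boldsymbol P(\omega,\delta))=0$, and the asymptotics of $\boldsymbol w^{\boldsymbol\alpha}_i(\omega,\delta)$ together with \eqref{P} give the expansion
\[
\boldsymbol I-\boldsymbol P(\omega,\delta)=-\frac{\rho\tau^2\omega^2}{|D|}\boldsymbol I+\frac{\delta}{|D|^2}\boldsymbol Q^{\boldsymbol\alpha}+\boldsymbol R(\omega,\delta),
\]
where, since $\boldsymbol w^{\boldsymbol\alpha}_i(\omega,\delta)$ and $\boldsymbol{\mathcal M}^{\boldsymbol\alpha,\sqrt\rho\omega}$ are analytic, $\boldsymbol R(\omega,\delta)$ is a convergent power series in $(\omega^2,\delta)$ beginning at total degree two, so that $\|\boldsymbol R(\omega,\delta)\|\le C(|\omega|^2+\delta)^2$ on a fixed neighborhood of the origin. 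First I would use the contrast identity $\tau^2=\delta/\epsilon$, which gives $\delta/(\rho\tau^2|D|)=\epsilon/(\rho|D|)$, together with $\tau=\mathcal O(1)$ bounded away from zero (hence $\delta\asymp\epsilon$), and factor out the nonzero scalar $\rho\tau^2/|D|$; the resonance condition is then equivalent to
\[
\det\Bigl(-\omega^2\boldsymbol I+\frac{\epsilon}{\rho|D|}\boldsymbol Q^{\boldsymbol\alpha}+\mathcal O\bigl((\omega^2+\epsilon)^2\bigr)\Bigr)=0 .
\]

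Next I would localize the small roots and then rescale. The limiting Neumann problem \eqref{int:neu} shows that $\det(\boldsymbol I-\boldsymbol P(\omega,0))$ has a zero of order $2d$ in $\omega$ (order $d$ in $\omega^2$) at $\omega=0$; since $\boldsymbol P$ depends continuously on $\epsilon$, a Rouch\'e argument---the same Gohberg--Sigal mechanism already used to obtain the $d$ eigenvalue branches $\omega^{\boldsymbol\alpha}(\delta)$---confines every small resonant frequency to a disk of radius $\mathcal O(\epsilon^{1/2})$. On this branch $\omega^2=\mathcal O(\epsilon)$, so the remainder above is $\mathcal O(\epsilon^2)$. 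Substituting $\omega^2=\epsilon\,\eta$ and dividing the determinant by $\epsilon^d$ (homogeneity of degree $d$), the characteristic equation becomes
\[
\det\Bigl(-\eta\,\boldsymbol I+\frac{1}{\rho|D|}\boldsymbol Q^{\boldsymbol\alpha}+\mathcal O(\epsilon)\Bigr)=0,
\]
that is, $\eta$ is an eigenvalue of $\boldsymbol A(\epsilon):=\frac{1}{\rho|D|}\boldsymbol Q^{\boldsymbol\alpha}+\mathcal O(\epsilon)$, an analytic perturbation of the Hermitian positive-definite matrix $\boldsymbol A(0)=\frac{1}{\rho|D|}\boldsymbol Q^{\boldsymbol\alpha}$ supplied by Lemma~\ref{G:symmetric}.

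Finally I would invoke analytic perturbation theory for matrix eigenvalues. The eigenvalues of $\boldsymbol A(0)$ are $\beta^{\boldsymbol\alpha}_i/(\rho|D|)>0$, each semisimple because $\boldsymbol A(0)$ is Hermitian; reducing to the eigenspace of $\beta^{\boldsymbol\alpha}_i$ then produces eigenvalue branches $\eta_i(\epsilon)=\beta^{\boldsymbol\alpha}_i/(\rho|D|)+\mathcal O(\epsilon)$ of $\boldsymbol A(\epsilon)$, the $\mathcal O(\epsilon)$ correction being valid regardless of the multiplicity of $\beta^{\boldsymbol\alpha}_i$ (a simple eigenvalue yields an analytic branch, a degenerate one a Puiseux branch, but the first correction is of order $\epsilon$ in both cases). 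Undoing the scaling, $\omega^2=\epsilon\,\eta_i(\epsilon)=\frac{\beta^{\boldsymbol\alpha}_i}{\rho|D|}\epsilon+\mathcal O(\epsilon^2)$, and taking square roots gives
\[
\omega^{\pm,\boldsymbol\alpha}_i(\epsilon)=\pm\sqrt{\frac{\beta^{\boldsymbol\alpha}_i}{\rho|D|}\epsilon+\mathcal O(\epsilon^2)}=\pm\sqrt{\frac{\beta^{\boldsymbol\alpha}_i}{\rho|D|}}\,\epsilon^{1/2}+\mathcal O(\epsilon^{3/2}),
\]
which is \eqref{total:fre}; the $d$ branches $\eta_i(\epsilon)$, counted with multiplicity, account for the $\pm$ pairs of resonant frequencies.

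The step I expect to be the main obstacle is the localization: converting the formal substitution $\omega^2=\epsilon\eta$ into a rigorous statement requires the a priori bound $\omega^2\asymp\epsilon$ on \emph{all} small roots, so that the remainder is genuinely $\mathcal O(\epsilon^2)$ rather than larger. This is precisely where the analyticity of $\boldsymbol P$ in $(\omega^2,\delta)$ and the Rouch\'e/Gohberg--Sigal argument are indispensable, and where a degenerate eigenvalue $\beta^{\boldsymbol\alpha}_i$ forces the use of the semisimple reduction in analytic perturbation theory rather than the naive first-order eigenvalue formula.
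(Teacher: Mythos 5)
Your proposal is correct in substance but follows a genuinely different route from the paper. The paper does not rescale the characteristic equation: it postulates a Puiseux-type expansion $\omega_i^{\pm,\boldsymbol\alpha}(\delta)=\pm\sqrt{\beta_i^{\boldsymbol\alpha}/(\rho\tau^2|D|)}\,\delta^{1/2}+b_i\delta+\mathcal{O}(\delta^{1+\frac{1}{2\vartheta_i}})$ (with $\vartheta_i$ the degree of an irreducible Weierstrass polynomial, the splitting of multiple eigenvalues being borrowed from \cite{CGLR2024}), expands the eigenvector $\boldsymbol h_i(\delta)=\boldsymbol h_i+\boldsymbol h_{i;1}\delta^{1/2}+\mathcal{O}(\delta)$ via \cite{K1995Perturbation}, inserts both into $(\boldsymbol I-\boldsymbol P(\omega,\delta))\boldsymbol h_i(\delta)=0$, and matches the coefficient of $\delta$ to conclude $b_i=0$, whence the $\mathcal{O}(\delta^{3/2})$ remainder; the conversion to $\epsilon$ via $\tau^2=\delta/\epsilon$ is the same as yours. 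Your route instead substitutes $\omega^2=\epsilon\eta$ into \eqref{E:unique} and exploits the Hermitian positive-definiteness of $\boldsymbol Q^{\boldsymbol\alpha}$ (Lemma \ref{G:symmetric}) to bound the shift of $\eta$ from $\beta_i^{\boldsymbol\alpha}/(\rho|D|)$ by $\mathcal{O}(\epsilon)$ in one stroke, which is arguably more direct: the vanishing of the order-$\delta$ coefficient, which the paper must prove by hand, is automatic once the eigenvalue correction is known to be $\mathcal{O}(\epsilon)$ uniformly in the multiplicity.

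One step in your write-up is stated too loosely, though it is repairable rather than fatal. After the rescaling, the remainder in $\det\bigl(-\eta\boldsymbol I+\tfrac{1}{\rho|D|}\boldsymbol Q^{\boldsymbol\alpha}+\mathcal{O}(\epsilon)\bigr)=0$ still depends on $\eta$ (because $\boldsymbol P$ depends on $\omega$), so there is no genuine analytic matrix family $\boldsymbol A(\epsilon)$ to which Kato's semisimple reduction applies verbatim; invoked literally, that theorem does not cover a spectral-parameter-dependent perturbation. The clean fix uses exactly the structure you identified: for any root $\eta^\ast$ the matrix $-\eta^\ast\boldsymbol I+\tfrac{1}{\rho|D|}\boldsymbol Q^{\boldsymbol\alpha}+\boldsymbol E(\eta^\ast,\epsilon)$ is singular with $\|\boldsymbol E\|=\mathcal{O}(\epsilon)$, and since $\tfrac{1}{\rho|D|}\boldsymbol Q^{\boldsymbol\alpha}$ is Hermitian its resolvent satisfies $\|(\tfrac{1}{\rho|D|}\boldsymbol Q^{\boldsymbol\alpha}-z\boldsymbol I)^{-1}\|=1/\mathrm{dist}(z,\sigma)$, so a Bauer--Fike-type estimate gives $\mathrm{dist}\bigl(\eta^\ast,\{\beta_i^{\boldsymbol\alpha}/(\rho|D|)\}\bigr)\le\|\boldsymbol E\|=\mathcal{O}(\epsilon)$ regardless of multiplicity; combined with your Rouch\'e/Gohberg--Sigal localization $\omega^2=\mathcal{O}(\epsilon)$ (and the count of $d$ roots, already supplied by the paper's Lemma 3.6), this yields $\eta_i(\epsilon)=\beta_i^{\boldsymbol\alpha}/(\rho|D|)+\mathcal{O}(\epsilon)$ and hence \eqref{total:fre}. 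With that substitution for the perturbation-theory citation, your argument is complete and avoids the Weierstrass-polynomial bookkeeping of the paper's proof.
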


\begin{proof}
Since the eigenvalues of $\boldsymbol{Q}^{\boldsymbol\alpha}$ may have multiplicity, the splitting of multiple eigenvalues, as discussed in \cite{CGLR2024}, allows us to assume that
\begin{align*}
\omega^{\pm,\boldsymbol\alpha}_i(\delta)=\pm\sqrt{\frac{\beta^{\boldsymbol\alpha}_i}{\rho\tau^2|D|}}\delta^{\frac{1}{2}}+ b_i\delta+\mathcal{O}(\delta^{1+\frac{1}{2\vartheta_i}}),
\end{align*}
where $\vartheta_i$ denotes the degree of an irreducible Weierstrass polynomial satisfied by $\omega^{\pm,\boldsymbol\alpha}_i(\delta)-\omega^{\pm,\boldsymbol\alpha}_{i,0}$. Moreover, according to \cite{K1995Perturbation}, the eigenvector $\boldsymbol{h}_i(\delta)$ admits the following asymptotic expansion:
\begin{align*}
\boldsymbol{h}_i(\delta)=\boldsymbol{h}_i+\boldsymbol{h}_{i;1}\delta^{\frac{1}{2}}+O(\delta).
\end{align*}
Thus, it follows that
\begin{align*}
\big(\boldsymbol{I}-\boldsymbol{P}(\omega^{\pm,\boldsymbol\alpha}_i(\delta),\delta)\big)\boldsymbol{h}_i(\delta)=0.
\end{align*}
By equating the coefficients of $\delta$ on both sides of the equation, 
we can derive that $b_i=0$ and other coefficients of terms with lower order than $3/2$ vanish. Then, it holds that
\begin{align*}
\omega^{\pm,\boldsymbol\alpha}_i(\delta)=\pm\sqrt{\frac{\beta^{\boldsymbol\alpha}_i}{\rho\tau^2|D|}}\delta^{\frac{1}{2}}+O(\delta^{\frac{3}{2}}).
\end{align*}
Therefore, \eqref{total:fre} follows  from \eqref{contrast}.

Note that $|D|=\mathcal{O}(1)$ and $\rho=\mathcal{O}(1)$. Then, $\omega^{\pm,\boldsymbol\alpha}_i\rightarrow 0$ as $\epsilon\rightarrow 0$. This implies that  $\omega^{\pm,\boldsymbol\alpha}_i$ lies within a small neighborhood of zero.
\end{proof}

Moreover, consider the following periodic Dirichlet-type problem in $Y\backslash \overline{D}$:
\begin{align*}
\left\{ \begin{aligned}
&\mathcal{L}_{\lambda,\mu}\boldsymbol{u}=0&&\text{in }Y\backslash \overline{D},\\
&\boldsymbol{u}=\boldsymbol e_i&&\text{on }\partial D,\\
&\boldsymbol{u}\quad\text{is periodic}. 
\end{aligned}\right.
 \end{align*}
This problem admits a unique solution given by  $\boldsymbol u=\boldsymbol e_i$ in $Y\backslash \overline{D}$. Thus, $\boldsymbol{\mathcal{M}}^{0,0}[\boldsymbol e_i]=\partial_{\boldsymbol \nu} \boldsymbol e_i|_{+}=0$. Since $\boldsymbol{\mathcal{M}}^{\boldsymbol \alpha,0}-\boldsymbol{\mathcal{M}}^{0,0}=\mathcal{O}(|\boldsymbol \alpha|)$, it follows that
\begin{align*}
\boldsymbol{\mathcal{M}}^{\boldsymbol \alpha,0}[\boldsymbol e_i]=\mathcal{O}(\boldsymbol |\boldsymbol\alpha|)\rightarrow 0 \quad\text{as} \quad\boldsymbol \alpha\rightarrow 0. 
\end{align*}
This implies that $Q^{\boldsymbol \alpha}_{ij}\rightarrow 0$ as $\boldsymbol \alpha\rightarrow 0$, and consequently, it is evident that $\omega^{\pm,\boldsymbol\alpha}_{i;0}\rightarrow 0$ as $\boldsymbol \alpha\rightarrow0$.

Since the multiplicity of $\beta^{\boldsymbol\alpha}_i$ lies in $[1,d]$, any irreducible Weierstrass polynomial satisfied by $\omega^{\pm,\boldsymbol\alpha}_i(\delta)-\omega^{\pm,\boldsymbol\alpha}_{i,0}$ has degree at most $d$. 
The construction of such an irreducible polynomial can be found in \cite{CGLR2024}.
For polynomials of degree at most $4$, it is well known that their roots can be explicitly expressed in terms of the polynomial's coefficients using only algebraic and radical operations. Consequently, through detailed calculations, we can derive the full asymptotic expansions of the resonant frequencies with respect to $\delta$.

\begin{theo}
Assume $\boldsymbol \alpha\neq 0.$
For $\omega\in\mathbb{C}$ with $ \omega\rightarrow 0$ and $\delta\in\mathbb{R}^+$ with $ \delta\rightarrow 0$, the nontrivial solution to the scattering problem \eqref{Lame}--\eqref{TC} has  the following  asymptotic expansion:
\begin{align*}
\boldsymbol u (\boldsymbol x)=\left\{ \begin{aligned}
&\frac{1}{|D|}\boldsymbol{h}_{i}+\mathcal{O}(\delta^{\frac{1}{2}})&&\text{in } D,\\
&\frac{1}{|D|}\boldsymbol{\widetilde{\boldsymbol{\mathcal{S}}}}^{\boldsymbol\alpha,0}_{D}\left[(\boldsymbol{\mathcal{S}}^{\boldsymbol\alpha,0}_{D})^{-1}[\boldsymbol{h}_{i}]\right](\boldsymbol x)+\mathcal{O}(\delta^{\frac{1}{2}})&&\text{in } Y\backslash \overline{D}, 
\end{aligned}\right.
\end{align*}
where $\boldsymbol{h}_{i}$ is the eigenvector of the matrix $\boldsymbol{Q^\alpha}$ corresponding to the eigenvalue $\beta^{\boldsymbol{\alpha}}_i$.
\end{theo}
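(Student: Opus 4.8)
The plan is to extract the interior profile of the resonant mode directly from Proposition~\ref{well posed} and the expansion~\eqref{wjasy}, and then propagate its Dirichlet trace to the exterior through the single-layer representation~\eqref{ext:solu}. For the interior, Proposition~\ref{well posed} expresses the resonant mode at $\omega=\omega^{\pm,\boldsymbol\alpha}_i(\delta)$ as $\boldsymbol u(\omega,\delta)=\sum_{j=1}^d h_j(\omega,\delta)\,\boldsymbol w^{\boldsymbol\alpha}_j(\omega,\delta)$, where $\boldsymbol h(\omega,\delta)$ solves \eqref{non:h}. By Theorem~\ref{theo:fre} one has $(\omega^{\pm,\boldsymbol\alpha}_i(\delta))^2=\mathcal O(\delta)$, so all the $\omega^2$- and $\delta$-terms in \eqref{wjasy} are $\mathcal O(\delta)$ in $\boldsymbol H^1_{\boldsymbol\alpha,\mathrm{per}}(D)$, whence $\boldsymbol w^{\boldsymbol\alpha}_j(\omega,\delta)=\chi_D\boldsymbol e_j/|D|+\mathcal O(\delta)$. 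Combining this with the eigenvector expansion $\boldsymbol h_i(\delta)=\boldsymbol h_i+\mathcal O(\delta^{1/2})$ recorded before Theorem~\ref{theo:fre}, and writing $\boldsymbol h_i=(h_{i,1},\dots,h_{i,d})^\top$ so that $\sum_{j=1}^d h_{i,j}\boldsymbol e_j$ is the constant vector field $\boldsymbol h_i$, one obtains $\boldsymbol u=\boldsymbol h_i/|D|+\mathcal O(\delta^{1/2})$ in $D$, the $\mathcal O(\delta^{1/2})$ remainder coming from the eigenvector perturbation (the $\boldsymbol w^{\boldsymbol\alpha}_j$-correction being only $\mathcal O(\delta)$).

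For the exterior, take the Dirichlet trace of the interior expansion; by continuity of the trace map $\boldsymbol H^1_{\boldsymbol\alpha,\mathrm{per}}(D)\to\boldsymbol H^{1/2}_{\boldsymbol\alpha,\mathrm{per}}(\partial D)$ this gives $\boldsymbol u|_{\partial D}=\boldsymbol h_i/|D|+\mathcal O(\delta^{1/2})$ in $\boldsymbol H^{1/2}_{\boldsymbol\alpha,\mathrm{per}}(\partial D)$. Inserting this into \eqref{ext:solu} with wavenumber $k=\sqrt{\rho}\,\omega=\mathcal O(\delta^{1/2})$, and using the expansions in Lemmas~\ref{le:series}, \ref{SK:asy} and~\ref{le:IS-series} to replace $\widetilde{\boldsymbol{\mathcal S}}^{\boldsymbol\alpha,k}_D$ and $(\boldsymbol{\mathcal S}^{\boldsymbol\alpha,k}_D)^{-1}$ by $\widetilde{\boldsymbol{\mathcal S}}^{\boldsymbol\alpha,0}_D$ and $(\boldsymbol{\mathcal S}^{\boldsymbol\alpha,0}_D)^{-1}$ modulo an $\mathcal O(k^2)=\mathcal O(\delta)$ operator-norm error, while noting that the composed operator $\widetilde{\boldsymbol{\mathcal S}}^{\boldsymbol\alpha,0}_D(\boldsymbol{\mathcal S}^{\boldsymbol\alpha,0}_D)^{-1}$ is bounded, we arrive at $\boldsymbol u^{\mathrm{ex}}=|D|^{-1}\widetilde{\boldsymbol{\mathcal S}}^{\boldsymbol\alpha,0}_D\big[(\boldsymbol{\mathcal S}^{\boldsymbol\alpha,0}_D)^{-1}[\boldsymbol h_i]\big]+\mathcal O(\delta^{1/2})$ in $Y\backslash\overline D$. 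Collecting the two cases yields the claimed expansion.

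The main obstacle is uniformity of the $\mathcal O(\cdot)$ estimates along the resonance curve $\omega=\omega^{\pm,\boldsymbol\alpha}_i(\delta)$: one must verify that the operator-norm remainders in \eqref{wjasy}, in the Neumann series for $(\boldsymbol{\mathcal S}^{\boldsymbol\alpha,k}_D)^{-1}$, and in the DtN expansion of Lemma~\ref{DtNasy} are bounded uniformly as $\delta\to0^+$, and that the eigenvector $\boldsymbol h_i(\delta)$ is fixed by a normalization compatible with the perturbation series for the possibly multiple eigenvalue $\beta^{\boldsymbol\alpha}_i$ of $\boldsymbol Q^{\boldsymbol\alpha}$ (through the irreducible Weierstrass-polynomial splitting already invoked for Theorem~\ref{theo:fre}). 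A secondary point is confirming that the trace of the $\boldsymbol H^1$ error is genuinely $\mathcal O(\delta^{1/2})$ in $\boldsymbol H^{1/2}_{\boldsymbol\alpha,\mathrm{per}}(\partial D)$, which follows from the uniform trace inequality on the fixed domain $D$; granting these, the argument is complete.
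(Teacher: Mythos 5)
Your proposal is correct and follows essentially the same route as the paper: the interior expansion comes from Proposition \ref{well posed} together with the leading term of \eqref{wjasy} and the eigenvector expansion $\boldsymbol h_i(\delta)=\boldsymbol h_i+\mathcal O(\delta^{1/2})$, and the exterior expansion is obtained by inserting the trace into \eqref{ext:solu} and invoking Lemmas \ref{SK:asy} and \ref{le:IS-series} with $k=\sqrt{\rho}\,\omega=\mathcal O(\delta^{1/2})$. The uniformity caveats you raise are handled implicitly in the paper by the analyticity of the relevant operator families, so no further work is needed.
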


\begin{proof}
It follows from Proposition \ref{well posed} that the subwavelength resonant mode ${\boldsymbol u}$ satisfies
\begin{align*}
{\boldsymbol u}=\sum_{j=1}^{d}h_{ij}(\delta)\boldsymbol w^{\boldsymbol\alpha}_j(\delta)=\sum_{j=1}^{d}(h_{ij}+O(\delta^{\frac{1}{2}}))\left(\frac{\boldsymbol e_j}{|D|}+\mathcal{O}(\delta)\right)=\frac{1}{|D|}\boldsymbol{h}_{i}+\mathcal{O}(\delta^{\frac{1}{2}}),
\end{align*}
where $h_{ij}$ is the $j$-th component of $\boldsymbol{h}_{i}$. According to \eqref{ext:solu}, the corresponding nontrivial solution in $Y\backslash \overline{D}$ has the following asymptotic expansion:
\begin{align*}
\boldsymbol{u}^{\mathrm{ex}}(\boldsymbol x)&=\boldsymbol{\widetilde{\boldsymbol{\mathcal{S}}}}^{\boldsymbol\alpha,\sqrt{\rho}\omega}_{D}\left[(\boldsymbol{\mathcal{S}}^{\boldsymbol\alpha,\sqrt{\rho}\omega}_{D})^{-1}[\boldsymbol{u}|_{\partial D}](\boldsymbol x)\right]\nonumber\\
&=\boldsymbol{\widetilde{\boldsymbol{\mathcal{S}}}}^{\boldsymbol\alpha,\sqrt{\rho}\omega}_{D}\left[\big((\boldsymbol{\mathcal{S}}^{\boldsymbol\alpha,0}_{D})^{-1}+\mathcal{O}(\omega^2)\big)\big[\frac{1}{|D|}\boldsymbol{h}_{i}+\mathcal{O}(\delta^{\frac{1}{2}})\big]\right](\boldsymbol x)\nonumber\\
&=\boldsymbol{\widetilde{\boldsymbol{\mathcal{S}}}}^{\boldsymbol\alpha,0}_{D}\left[(\boldsymbol{\mathcal{S}}^{\boldsymbol\alpha,0}_{D})^{-1}[\frac{1}{|D|}\boldsymbol{h}_{i}+\mathcal{O}(\delta^{\frac{1}{2}})]\right](\boldsymbol x)+\mathcal{O}(\omega^2)\nonumber\\
&=\frac{1}{|D|}\boldsymbol{\widetilde{\boldsymbol{\mathcal{S}}}}^{\boldsymbol\alpha,0}_{D}\left[(\boldsymbol{\mathcal{S}}^{\boldsymbol\alpha,0}_{D})^{-1}[\boldsymbol{h}_{i}]\right](\boldsymbol x)+\mathcal{O}(\delta^{\frac{1}{2}}),
\end{align*}
where we have  used Lemmas \ref{le:IS-series} and \ref{SK:asy}.
\end{proof}

\subsection{Subwavelength bandgap}

From a physical perspective, frequencies are non-negative. The following discussion focuses on the subwavelength resonant frequencies:
\begin{align}\label{fre}
\omega^{\boldsymbol\alpha}_i(\epsilon)=\sqrt{\frac{\beta^{\boldsymbol\alpha}_i}{\rho|D|}}\epsilon^{\frac{1}{2}}+O(\epsilon^{\frac{3}{2}}),\quad 1\leq i\leq d, 
\end{align}
where the leading terms $\omega^{\boldsymbol\alpha}_{i;0}:=\sqrt{\frac{\beta^{\boldsymbol\alpha}_i}{\rho|D|}}\epsilon^{\frac{1}{2}}$ are positive. A similar analysis applies to the resonant frequencies $\omega^{-,\boldsymbol\alpha}_i(\epsilon)$. 

By introducing the quasi-periodic Dirichlet-to-Neumann (DtN) map and employing asymptotic analysis, we have derived the asymptotic expansions of the subwavelength resonant frequencies for the case where $\boldsymbol\alpha\neq 0$, along with the corresponding nontrivial solutions. Next, we direct our focus to the periodic scenario when $\boldsymbol\alpha=0$.
For the periodic Lam\'{e} system \eqref{Lame}--\eqref{TC}, zero is an eigenvalue with multiplicity $d$, and the constant vector is the corresponding nontrivial solution. Therefore, for a given elastic phononic crystal, we obtain the following spectral bands within which waves can propagate for $\boldsymbol\alpha\in[-\pi,\pi]^d$:
\begin{align*}
\left[0,\max_{\boldsymbol \alpha\neq 0}\omega^{\boldsymbol \alpha}_{1}\right]\cup\cdots\cup\left[0,\max_{\boldsymbol \alpha\neq 0}\omega^{\boldsymbol \alpha}_{d}\right]\cup\left[\min_{\boldsymbol \alpha}\omega^{\boldsymbol \alpha}_{d+1},\max_{\boldsymbol \alpha}\omega^{\boldsymbol \alpha}_{d+1}\right]\cup\cdots. 
\end{align*}
Without loss of generality, we order $\max_{\boldsymbol \alpha\neq 0}\omega^{\boldsymbol \alpha}_{i}(1\leq i\leq d)$ as
\[
\max_{\boldsymbol \alpha\neq 0}\omega^{\boldsymbol \alpha}_{1}\leq\cdots\leq\max_{\boldsymbol \alpha\neq 0}\omega^{\boldsymbol \alpha}_{d}.
\]

Let $\omega^{\ast}:=\max_{\boldsymbol \alpha\in[-\pi,\pi]^d\backslash 0}\omega^{\boldsymbol\alpha}_{d;0}.$ We then establish the existence of a subwavelength bandgap, as characterized in the following theorem.

\begin{theo}\label{bandgap}
Let $\omega^\sharp>\omega^\ast+\eta.$ For any sufficiently small $\eta>0$, there exists $\epsilon_0> 0$  such that
\begin{align}\label{gap}
[\omega^\ast+\eta,\omega^\sharp]\subset\left[\max_{\boldsymbol\alpha\in[-\pi,\pi]^d\backslash 0}\omega^{\boldsymbol \alpha}_{d},\min_{\boldsymbol\alpha\in[-\pi,\pi]^d}\omega^{\boldsymbol \alpha}_{d+1}\right]
\end{align}
for all $\epsilon<\epsilon_0$.
\end{theo}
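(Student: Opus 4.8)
The plan is to reduce the claimed inclusion \eqref{gap} to the two one-sided estimates
\[
\max_{\boldsymbol\alpha\in[-\pi,\pi]^d\backslash 0}\omega^{\boldsymbol\alpha}_{d}(\epsilon)\le\omega^{\ast}+\eta
\qquad\text{and}\qquad
\omega^{\sharp}\le\min_{\boldsymbol\alpha\in[-\pi,\pi]^d}\omega^{\boldsymbol\alpha}_{d+1}(\epsilon),
\]
each to be verified for every $\epsilon$ below a threshold $\epsilon_{0}=\epsilon_{0}(\eta,\omega^{\sharp})$. Since at $\boldsymbol\alpha=0$ the frequency $\omega=0$ is a resonant frequency of \eqref{Lame}--\eqref{TC} of multiplicity $d$ (the constant vectors), the first $d$ bands are each of the form $[0,\max_{\boldsymbol\alpha\ne0}\omega^{\boldsymbol\alpha}_{i}]$ and their union is $[0,\max_{\boldsymbol\alpha\ne0}\omega^{\boldsymbol\alpha}_{d}]$; thus the gap opens immediately above this endpoint, and the two displayed inequalities give \eqref{gap}.

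For the first estimate I would invoke Theorem \ref{theo:fre}: $\omega^{\boldsymbol\alpha}_{d}(\epsilon)=\omega^{\boldsymbol\alpha}_{d;0}+\mathcal{O}(\epsilon^{3/2})$ with $\omega^{\boldsymbol\alpha}_{d;0}=\sqrt{\beta^{\boldsymbol\alpha}_{d}/(\rho|D|)}\,\epsilon^{1/2}$ and $\beta^{\boldsymbol\alpha}_{d}$ an eigenvalue of $\boldsymbol{Q}^{\boldsymbol\alpha}$. The essential point is that this expansion holds \emph{uniformly in} $\boldsymbol\alpha$: although the quasi-periodic Green's function $\boldsymbol{G}^{\boldsymbol\alpha,0}$ degenerates like $|\boldsymbol\alpha|^{-2}$ as $\boldsymbol\alpha\to0$, the operator $\boldsymbol{\mathcal{M}}^{\boldsymbol\alpha,0}$ is the traction on $\partial D$ of the solution of the exterior Dirichlet problem of Lemma \ref{exter:real}, whose coercivity (Korn together with a Poincar\'{e} inequality anchored by the Dirichlet trace on $\partial D$) does not deteriorate as $\boldsymbol\alpha\to0$; hence $\boldsymbol{\mathcal{M}}^{\boldsymbol\alpha,0}$, the matrix $\boldsymbol{Q}^{\boldsymbol\alpha}$, and all coefficients appearing in \eqref{wjasy} stay bounded on the compact zone $[-\pi,\pi]^d$, and in fact $\boldsymbol{\mathcal{M}}^{\boldsymbol\alpha,0}\to\boldsymbol{\mathcal{M}}^{0,0}$, so that $\beta^{\boldsymbol\alpha}_{d}$ extends continuously to $\boldsymbol\alpha=0$ with value $0$. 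Consequently $\beta_{\ast}:=\max_{\boldsymbol\alpha}\beta^{\boldsymbol\alpha}_{d}<\infty$, both $\omega^{\ast}$ and $\max_{\boldsymbol\alpha\ne0}\omega^{\boldsymbol\alpha}_{d}$ are $\mathcal{O}(\epsilon^{1/2})$, and $\max_{\boldsymbol\alpha\ne0}\omega^{\boldsymbol\alpha}_{d}(\epsilon)\le\omega^{\ast}+C\epsilon^{3/2}\le\omega^{\ast}+\eta$ as soon as $\epsilon<(\eta/C)^{2/3}$.

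For the second estimate I would show that the $(d+1)$-st band sits above a positive constant independent of $\epsilon$ and $\boldsymbol\alpha$. By Proposition \ref{well posed} and the coercivity of $a_{0,0}$, there is $\omega_{c}>0$ such that for $|\omega|<\omega_{c}$ and $\delta$ small the nontrivial solutions of \eqref{int:varia} are exactly the roots of $\det(\boldsymbol{I}-\boldsymbol{P}(\omega,\delta))=0$, of which, counting the positive frequencies, there are precisely $d$, namely $\omega^{\boldsymbol\alpha}_{1},\dots,\omega^{\boldsymbol\alpha}_{d}$. The threshold $\omega_{c}$ can be taken independent of $\boldsymbol\alpha$ and of $\epsilon$: the form $a_{0,0}$ in \eqref{a00(u,v)} does not involve $\boldsymbol\alpha$ and is coercive with an $\boldsymbol\alpha$-independent constant; the loss of coercivity under $-\rho\tau^{2}\omega^{2}\int_{D}\boldsymbol{u}\cdot\overline{\boldsymbol{v}}$ is governed by $\tau^{2}=\mathcal{O}(1)$ and a fixed Sobolev constant; and the perturbation $\delta\,\boldsymbol{\mathcal{M}}^{\boldsymbol\alpha,\sqrt\rho\omega}$ is uniformly negligible because $\boldsymbol{\mathcal{M}}^{\boldsymbol\alpha,\sqrt\rho\omega}$ is uniformly bounded, again by its continuous extension to $\boldsymbol\alpha=0$. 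Hence no resonant frequency lies in $(\max_{i\le d}\omega^{\boldsymbol\alpha}_{i}(\epsilon),\,\omega_{c})$, so $\omega^{\boldsymbol\alpha}_{d+1}(\epsilon)\ge\omega_{c}$ for every $\boldsymbol\alpha$ and all small $\epsilon$; as $\delta\to0$ the $(d+1)$-st band converges to $\sqrt{\theta_{1}/(\rho\tau^{2})}$, where $\theta_{1}$ is the first nonzero Neumann eigenvalue of $-\mathcal{L}_{\lambda,\mu}$ on $D$, and since $\tau^{2}=\mathcal{O}(1)$, the bound $\omega_{c}$ stays bounded below as $\epsilon\to0$, which gives $\omega^{\sharp}\le\min_{\boldsymbol\alpha}\omega^{\boldsymbol\alpha}_{d+1}(\epsilon)$ for any admissible $\omega^{\sharp}\le\omega_{c}$ (the implicit upper restriction on $\omega^{\sharp}$). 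Combining the two estimates yields \eqref{gap} for $\epsilon<\epsilon_{0}$.

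The step I expect to be the main obstacle is the \emph{uniformity in} $\boldsymbol\alpha$ on which both estimates rest, in particular near $\boldsymbol\alpha=0$, where the layer-potential formulas in terms of $\boldsymbol{G}^{\boldsymbol\alpha,k}$ blow up. I would resolve this by working with the variational definition of $\boldsymbol{\mathcal{M}}^{\boldsymbol\alpha,k}$ through the exterior boundary value problem rather than through its integral representation: the only part of $\boldsymbol{G}^{\boldsymbol\alpha,0}$ that is singular as $\boldsymbol\alpha\to0$ is constant in space and therefore carries zero traction, so it drops out of the DtN map, and the exterior Dirichlet problem remains well posed with $\boldsymbol\alpha$-uniform constants. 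This uniformity then propagates through Lemmas \ref{SK:asy}, \ref{le:IS-series} and \ref{DtNasy} and through the asymptotic analysis leading to Theorem \ref{theo:fre}, so that the $\mathcal{O}(\epsilon^{3/2})$ remainder is uniform over the Brillouin zone, which is precisely what the first estimate requires.
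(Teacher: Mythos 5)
Your overall two-sided strategy (bounding $\max_{\boldsymbol\alpha\neq 0}\omega^{\boldsymbol\alpha}_d$ from above by $\omega^\ast+\eta$ and $\min_{\boldsymbol\alpha}\omega^{\boldsymbol\alpha}_{d+1}$ from below) is the same as the paper's, and your treatment of the second estimate is essentially a more explicit version of what the paper does (the paper simply notes $\omega^0_{d+1}\neq 0$ and that for $\boldsymbol\alpha\neq 0$ there are no subwavelength resonances beyond $(\omega^{\boldsymbol\alpha}_i)_{i=1}^d$). The genuine gap is in your first estimate: you rest it entirely on the claim that the expansion of Theorem \ref{theo:fre} holds \emph{uniformly} in $\boldsymbol\alpha$ over the whole Brillouin zone, including arbitrarily close to $\boldsymbol\alpha=0$. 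That theorem is stated and proved only for $\boldsymbol\alpha\neq 0$, and uniformity down to $\boldsymbol\alpha=0$ is not a consequence of the uniform boundedness of $\boldsymbol{\mathcal{M}}^{\boldsymbol\alpha,0}$ alone: as $\boldsymbol\alpha\to 0$ one has $\boldsymbol{Q}^{\boldsymbol\alpha}\to 0$, so the leading eigenvalues $\beta^{\boldsymbol\alpha}_i$ degenerate, the resonances collide with the $d$-fold zero frequency, and the constants hidden in the Weierstrass/Puiseux splitting argument behind the $\mathcal{O}(\epsilon^{3/2})$ remainder may blow up; comparing a leading term of size $\sqrt{\beta^{\boldsymbol\alpha}_d}\,\epsilon^{1/2}$ (which vanishes as $\boldsymbol\alpha\to 0$) with a remainder whose constant is not shown to be $\boldsymbol\alpha$-uniform is exactly the step that needs its own proof. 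Your supporting argument for uniformity is also factually off: the singular $\boldsymbol n=0$ mode of $\boldsymbol G^{\boldsymbol\alpha,0}$ is $e^{\mathrm{i}\boldsymbol\alpha\cdot(\boldsymbol x-\boldsymbol y)}$ times an $|\boldsymbol\alpha|^{-2}$ factor, so it is \emph{not} constant in space and its traction is of order $|\boldsymbol\alpha|^{-1}$, which does not simply drop out of the layer-potential representation.

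The paper avoids this issue altogether by splitting the Brillouin zone: for $|\boldsymbol\alpha|\leq\widetilde a$ it uses only the continuity of the band functions $\omega^{\boldsymbol\alpha}_i$ in $\boldsymbol\alpha$ and $\epsilon$ together with $\omega^0_i=0$ to keep these frequencies below $\omega^\ast$, and it invokes the asymptotics \eqref{fre} only for $|\boldsymbol\alpha|>\widetilde a$, where $\boldsymbol\alpha$ is bounded away from zero and the expansion of Theorem \ref{theo:fre} applies without any uniformity claim near $\boldsymbol\alpha=0$. If you want to salvage your route, you would have to prove an $\boldsymbol\alpha$-uniform version of Theorem \ref{theo:fre} (tracking how the remainder depends on $\boldsymbol{Q}^{\boldsymbol\alpha}$ as its eigenvalues tend to zero), which is substantially more work than the two-region argument; alternatively, adopt the paper's splitting, which only needs the qualitative continuity of $\omega^{\boldsymbol\alpha}_i$ already recorded in Section \ref{sec:2}. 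Your explicit restriction $\omega^\sharp\leq\omega_c$ is a reasonable reading of the (implicit) constraint on $\omega^\sharp$ and is not a defect relative to the paper.
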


\begin{proof}
Due to the continuity of $\omega^{\boldsymbol \alpha}_i, i=1,\dots,d$ with respect to $\boldsymbol\alpha$ and $\epsilon$, along with the fact that $\omega^0_i=0$, there exist $\widetilde{a}$ and $\epsilon_1>0$ such that
\[
\omega^{\boldsymbol \alpha}_i(\epsilon)<\omega^\ast,\quad\forall\,|\boldsymbol \alpha|\leq\widetilde{a},\quad\forall\,\epsilon<\epsilon_1.
\]
Moreover, based on the derivation of \eqref{fre} in the case of  $\boldsymbol \alpha\neq0$ and the definition of $\omega^\ast$, there exists $0<\epsilon_2<\epsilon_1$ satisfying
\[
\omega^{\boldsymbol \alpha}_i(\epsilon)\leq \omega^\ast+\eta,\quad\forall\,|\boldsymbol \alpha|>\widetilde{a},\quad\forall\,\epsilon<\epsilon_2.
\]
By choosing  $\epsilon_2$
such that
\[
\omega^{\boldsymbol \alpha}_i(\epsilon)\leq \omega^\ast+\eta,\quad\forall\,\boldsymbol\alpha\in[-\pi,\pi]^d,\quad\forall\,\epsilon<\epsilon_2,
\]
we obtain
\begin{align}\label{gap1}
\max_{\boldsymbol\alpha\in[-\pi,\pi]^d}\omega^{\boldsymbol \alpha}_{d}(\epsilon)\leq \omega^\ast+\eta, \quad\forall\,\epsilon<\epsilon_2.
\end{align}

Next, we prove that for sufficiently small $\epsilon$,
\begin{align}\label{gap2}
\min_{\boldsymbol\alpha\in[-\pi,\pi]^d}\omega^{\boldsymbol \alpha}_{d+1}(\epsilon)>\omega^\ast+\eta.
\end{align}

For $\boldsymbol\alpha=0$, since the Lam\'{e} system \eqref{Lame}--\eqref{TC} has a zero resonant frequency with full multiplicity $d$, it follows that $\omega^0_{d+1}\neq 0$.
Furthermore, $\omega^{\boldsymbol\alpha}_{i,0}$ lies in a small neighborhood of zero. Then, for sufficiently small $\epsilon$ and $\eta$, we have 
\[
\omega^0_{d+1}(\epsilon)>\omega^\ast+\eta.
\]
In fact, there exists $\epsilon_3>0$ such that $\omega^{\boldsymbol\alpha}_{i;0}+\eta$
lies in the disk centered at $0$ with radius $\frac{\omega^0_{d+1}}{2}$. This immediately implies that
\[
\omega^0_{d+1}(\epsilon)>\omega^\ast+\eta,\quad\forall\,\epsilon<\epsilon_3.
\]
For $\boldsymbol\alpha\neq 0$, it is evident that there are no additional subwavelength resonant frequencies apart from  $(\omega^{\boldsymbol \alpha}_{i}(\epsilon))^d_{i=1}$.

Let $\epsilon_0=\mathrm{min}\{\epsilon_2,\epsilon_3\}$. Then, combining \eqref{gap1} and \eqref{gap2} establishes \eqref{gap}.
\end{proof}

\section{ Dilute structures}\label{sec:4}


In this section, we analyze the dilute configuration of three-dimensional phononic crystals. As illustrated in Figure \ref{Fig2}, the spacing between adjacent resonators is significantly larger than the characteristic size of an individual resonator.

\begin{figure}[h]
\centering
\includegraphics[scale=0.9]{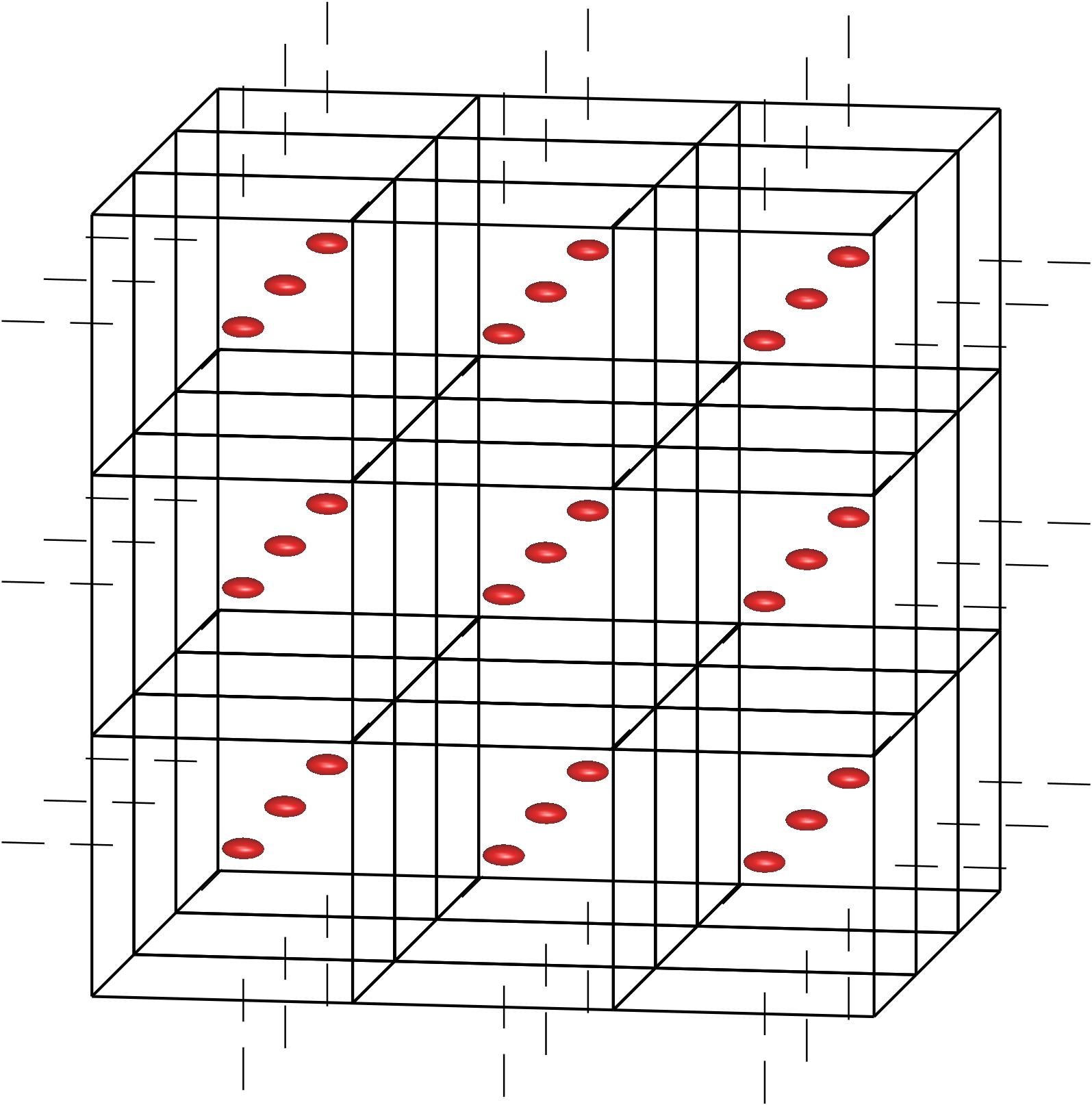}
\caption{The dilute configuration of a phononic crystal.}
\label{Fig2}
\end{figure}

Let $d=3$ and $B=s D$ for a small parameter  $0<s\ll1$. Moreover, we assume that $\frac{\epsilon}{s^2}\ll 1$. Under these conditions, it can be deduced that Theorems \ref{theo:fre} and \ref{bandgap} remain valid for $B$, with $\boldsymbol{Q}^{B,\boldsymbol\alpha}$ and $|B|$ replacing $\boldsymbol{Q}^{\boldsymbol\alpha}$ and $|D|$ respectively. Specifically, the matrix  $\boldsymbol{Q}^{B,\boldsymbol\alpha}  $ is defined as
\begin{align*}
\boldsymbol{Q}^{B,\boldsymbol\alpha}:=(Q^{B,\boldsymbol\alpha}_{ij})^3_{i,j=1}, \quad  Q^{B,\boldsymbol\alpha}_{ij}=-\int_{\partial B} \boldsymbol{\mathcal{M}}^{\boldsymbol\alpha,0}[\boldsymbol e_i|_{\partial B}]\cdot\boldsymbol e_j\mathrm{d}\sigma(\boldsymbol x).
\end{align*}

Let $\boldsymbol {\mathcal {S}}^{k}_{B}$
 denote the single-layer potential operator associated with $\boldsymbol{G}^{k}$, defined as
\begin{align*}
\boldsymbol{\mathcal{S}}^{k}_{B}[\boldsymbol\varphi](\boldsymbol x)&:=\int_{\partial B}\boldsymbol{G}^{k}(\boldsymbol x-\boldsymbol y)\boldsymbol\varphi(\boldsymbol y)\mathrm{d}\sigma(\boldsymbol y),\quad \boldsymbol x\in\partial{B}.
\end{align*}
It is known that $\boldsymbol{\mathcal{S}}^{0}_{B}:H^{-\frac{1}{2}}(\partial B)^3\rightarrow H^{\frac{1}{2}}(\partial B)^3$ is invertible \cite{BochaoE,chang2007spectral}, and we denote its inverse by $(\boldsymbol{\mathcal{S}}^{0}_{B})^{-1}$.

The following lemma characterizes the asymptotic behavior of the matrix $\boldsymbol{Q}^{B,\boldsymbol\alpha}$ in the dilute regime, providing key insights into the scaling properties of the system.

\begin{lemm}\label{le:dilute} 
Given a positive constant $a$, for $|\boldsymbol\alpha|>a>0$, the following asymptotic expansion holds:
\begin{align}\label{dilute}
Q^{B,\boldsymbol\alpha}_{ij}= Q^B_{ij}-\sum^3_{l=1}\boldsymbol R^{\boldsymbol\alpha}(0)\int_{\partial B}(\boldsymbol{\mathcal{S}}^0_{B})^{-1}[\boldsymbol e_i|_{\partial B}](\boldsymbol x)\mathrm{d}\sigma (\boldsymbol x)\cdot (Q^B_{lj}\boldsymbol e_l)+\mathcal{O}(s^3),
\end{align}
where
\[
Q^B_{ij}=\int_{\partial B}(\boldsymbol{\mathcal{S}}^0_{B})^{-1}[\boldsymbol e_i|_{\partial B}](\boldsymbol x)\cdot\boldsymbol e_j\mathrm{d}\sigma(\boldsymbol x), 
\]
and the matrix  $\boldsymbol R^{\boldsymbol\alpha}=(R^{\boldsymbol\alpha}_{ij})^3_{i,j=1}$ is given by
\[
  R^{\boldsymbol\alpha}_{ij}(\boldsymbol x)=G^{\boldsymbol\alpha,0}_{ij}(\boldsymbol x)-G^{0}_{ij}(\boldsymbol x).
\]
\end{lemm}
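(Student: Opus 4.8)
\medskip
\noindent\emph{Proof proposal.} The plan is to reduce $Q^{B,\boldsymbol\alpha}_{ij}$ to a boundary integral involving only $(\boldsymbol{\mathcal{S}}^{\boldsymbol\alpha,0}_B)^{-1}$, to split the quasi-periodic kernel $\boldsymbol G^{\boldsymbol\alpha,0}=\boldsymbol G^{0}+\boldsymbol R^{\boldsymbol\alpha}$ into its free-space singular part and the smooth remainder $\boldsymbol R^{\boldsymbol\alpha}$, and then to extract the powers of $s$ from the dilation $B=sD$ (here the hypothesis $d=3$ enters through the degree $-1$ homogeneity of $\boldsymbol G^{0}$). First I would observe that, for $s$ small enough that $\overline B\subset Y$, the layer-potential machinery of Section~\ref{sec:3} transfers to $B$ verbatim: $\boldsymbol{\mathcal{S}}^{\boldsymbol\alpha,0}_B\colon H^{-\frac12}(\partial B)^{3}\to\boldsymbol H^{\frac12}_{\alpha,\mathrm{per}}(\partial B)$ is invertible (by the argument of Lemma~\ref{le:IS-series}, which uses only $\overline B\subset Y$, or by the Neumann series below), the interior Dirichlet problem $\mathcal{L}_{\lambda,\mu}\boldsymbol u=0$ in $B$ with $\boldsymbol u=\boldsymbol e_i$ on $\partial B$ has the unique solution $\boldsymbol u\equiv\boldsymbol e_i$, and the jump relations \eqref{quasi-jump} together with \eqref{eq:M0} then give, exactly as in Lemma~\ref{le:ei}, $\boldsymbol{\mathcal{M}}^{\boldsymbol\alpha,0}[\boldsymbol e_i|_{\partial B}]=(\boldsymbol{\mathcal{S}}^{\boldsymbol\alpha,0}_B)^{-1}[\boldsymbol e_i|_{\partial B}]$. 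Hence $Q^{B,\boldsymbol\alpha}_{ij}=-\int_{\partial B}(\boldsymbol{\mathcal{S}}^{\boldsymbol\alpha,0}_B)^{-1}[\boldsymbol e_i|_{\partial B}]\cdot\boldsymbol e_j\,\mathrm{d}\sigma$, while the same reasoning at the free-space level identifies $Q^{B}_{ij}$ with the corresponding integral of $(\boldsymbol{\mathcal{S}}^{0}_B)^{-1}[\boldsymbol e_i|_{\partial B}]$.

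Next I would set up the perturbation $\boldsymbol{\mathcal{S}}^{\boldsymbol\alpha,0}_B=\boldsymbol{\mathcal{S}}^{0}_B+\boldsymbol{\mathcal{R}}^{\boldsymbol\alpha}_B$, where $\boldsymbol{\mathcal{R}}^{\boldsymbol\alpha}_B[\boldsymbol\varphi](\boldsymbol x)=\int_{\partial B}\boldsymbol R^{\boldsymbol\alpha}(\boldsymbol x-\boldsymbol y)\boldsymbol\varphi(\boldsymbol y)\,\mathrm{d}\sigma(\boldsymbol y)$. Since $\mathcal{L}_{\lambda,\mu}\boldsymbol R^{\boldsymbol\alpha}=\sum_{\boldsymbol n\neq\boldsymbol 0}\boldsymbol\delta(\cdot-\boldsymbol n)e^{\mathrm{i}\boldsymbol n\cdot\boldsymbol\alpha}\boldsymbol I$ vanishes for $|\boldsymbol z|<1$, interior (real-analytic) elliptic regularity makes $\boldsymbol R^{\boldsymbol\alpha}$ smooth near the origin, with $C^{2}$-norm on a fixed small ball bounded uniformly for $a\le|\boldsymbol\alpha|\le\pi\sqrt3$ (no lattice resonance occurs, since $|2\pi\boldsymbol n+\boldsymbol\alpha|\ge a>0$); in particular $\boldsymbol R^{\boldsymbol\alpha}(\boldsymbol x-\boldsymbol y)=\boldsymbol R^{\boldsymbol\alpha}(\boldsymbol 0)+\mathcal{O}(|\boldsymbol x-\boldsymbol y|)$ for $\boldsymbol x,\boldsymbol y\in\partial B$, so that $\boldsymbol{\mathcal{R}}^{\boldsymbol\alpha}_B[\boldsymbol\varphi]=\boldsymbol R^{\boldsymbol\alpha}(\boldsymbol 0)\int_{\partial B}\boldsymbol\varphi\,\mathrm{d}\sigma+\mathcal{O}(s)\int_{\partial B}|\boldsymbol\varphi|\,\mathrm{d}\sigma$. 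The dilation $\boldsymbol x=s\hat{\boldsymbol x}$, with $\hat{\boldsymbol\varphi}(\hat{\boldsymbol x}):=\boldsymbol\varphi(s\hat{\boldsymbol x})$, and the homogeneity of $\boldsymbol G^{0}$ give $\boldsymbol{\mathcal{S}}^{0}_B[\boldsymbol\varphi](s\hat{\boldsymbol x})=s\,\boldsymbol{\mathcal{S}}^{0}_D[\hat{\boldsymbol\varphi}](\hat{\boldsymbol x})$, so $(\boldsymbol{\mathcal{S}}^{0}_B)^{-1}$ rescales to $s^{-1}(\boldsymbol{\mathcal{S}}^{0}_D)^{-1}$; combined with the previous estimate this shows $(\boldsymbol{\mathcal{S}}^{0}_B)^{-1}\boldsymbol{\mathcal{R}}^{\boldsymbol\alpha}_B$ has operator norm $\mathcal{O}(s)$ in the rescaled variables, whence $\boldsymbol{\mathcal{S}}^{\boldsymbol\alpha,0}_B$ is invertible for small $s$ and a Neumann series gives $(\boldsymbol{\mathcal{S}}^{\boldsymbol\alpha,0}_B)^{-1}=(\boldsymbol{\mathcal{S}}^{0}_B)^{-1}-(\boldsymbol{\mathcal{S}}^{0}_B)^{-1}\boldsymbol{\mathcal{R}}^{\boldsymbol\alpha}_B(\boldsymbol{\mathcal{S}}^{0}_B)^{-1}+(\text{higher-order terms})$.

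The conclusion then follows by substituting this expansion into $Q^{B,\boldsymbol\alpha}_{ij}=-\int_{\partial B}(\boldsymbol{\mathcal{S}}^{\boldsymbol\alpha,0}_B)^{-1}[\boldsymbol e_i|_{\partial B}]\cdot\boldsymbol e_j\,\mathrm{d}\sigma$. Writing $\boldsymbol\psi_l:=(\boldsymbol{\mathcal{S}}^{0}_B)^{-1}[\boldsymbol e_l|_{\partial B}]$, the scaling gives $\boldsymbol\psi_l(\boldsymbol x)=s^{-1}(\boldsymbol{\mathcal{S}}^{0}_D)^{-1}[\boldsymbol e_l|_{\partial D}](\boldsymbol x/s)$, so $\int_{\partial B}\boldsymbol\psi_l\,\mathrm{d}\sigma=\mathcal{O}(s)$ and $\int_{\partial B}\boldsymbol\psi_l\cdot\boldsymbol e_j\,\mathrm{d}\sigma$ is the entry $Q^{B}_{lj}=\mathcal{O}(s)$. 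In the cross term, $\boldsymbol{\mathcal{R}}^{\boldsymbol\alpha}_B[\boldsymbol\psi_i]=\boldsymbol R^{\boldsymbol\alpha}(\boldsymbol 0)\int_{\partial B}\boldsymbol\psi_i\,\mathrm{d}\sigma+\mathcal{O}(s^{2})$ is a constant vector plus a controlled error, and $(\boldsymbol{\mathcal{S}}^{0}_B)^{-1}$ maps a constant vector $\boldsymbol v$ to $\sum_l(\boldsymbol v\cdot\boldsymbol e_l)\boldsymbol\psi_l$; pairing with $\boldsymbol e_j$ therefore produces $\sum_l\big(\boldsymbol R^{\boldsymbol\alpha}(\boldsymbol 0)\int_{\partial B}\boldsymbol\psi_i\,\mathrm{d}\sigma\big)\cdot\boldsymbol e_l\int_{\partial B}\boldsymbol\psi_l\cdot\boldsymbol e_j\,\mathrm{d}\sigma$, which is exactly the second term of \eqref{dilute}. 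Collecting these contributions, with the definitions of $Q^{B}_{ij}$ and $\boldsymbol R^{\boldsymbol\alpha}$ in hand, reproduces \eqref{dilute}.

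The step I expect to be the main obstacle is the sharp bookkeeping of the powers of $s$: one must verify that, after composition with $(\boldsymbol{\mathcal{S}}^{0}_B)^{-1}$ and pairing against the fixed field $\boldsymbol e_j$ over $\partial B$, both the truncated Neumann remainder and the quadratic Taylor remainder of $\boldsymbol R^{\boldsymbol\alpha}$ contribute only $\mathcal{O}(s^{3})$ rather than $\mathcal{O}(s^{2})$. This hinges on the interplay between the $s^{-1}$ carried by $(\boldsymbol{\mathcal{S}}^{0}_B)^{-1}$, the factor $|\partial B|=\mathcal{O}(s^{2})$ gained upon integrating against $\boldsymbol e_j$, and the correct rescaling of the $H^{\pm\frac12}(\partial B)$ norms; one must also confirm that the implied constant is uniform for $|\boldsymbol\alpha|>a$, which is ensured by the uniform $C^{2}$-bounds on $\boldsymbol R^{\boldsymbol\alpha}$ and the uniform invertibility of $\boldsymbol{\mathcal{S}}^{\boldsymbol\alpha,0}_D$. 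Everything else is a routine assembly of the layer-potential identities already established above.
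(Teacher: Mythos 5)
Your proposal is correct and follows essentially the same route as the paper: reduce $Q^{B,\boldsymbol\alpha}_{ij}$ to $-\int_{\partial B}(\boldsymbol{\mathcal{S}}^{\boldsymbol\alpha,0}_B)^{-1}[\boldsymbol e_i|_{\partial B}]\cdot\boldsymbol e_j\,\mathrm{d}\sigma$ via the analogue of Lemma \ref{le:ei}, split $\boldsymbol G^{\boldsymbol\alpha,0}=\boldsymbol G^{0}+\boldsymbol R^{\boldsymbol\alpha}$ with the Taylor expansion $\boldsymbol R^{\boldsymbol\alpha}(\boldsymbol x)=\boldsymbol R^{\boldsymbol\alpha}(0)+\mathcal{O}(|\boldsymbol x|)$, exploit the degree $-1$ homogeneity of $\boldsymbol G^{0}$ under the dilation $B=sD$, and invert by a Neumann series before collecting powers of $s$. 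The only differences are cosmetic (you keep the perturbation on $\partial B$ rather than rescaling everything to $\partial D$ at the outset, and you spell out the uniformity in $|\boldsymbol\alpha|>a$ that the paper leaves implicit).
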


\begin{proof}
Note that $R^{\boldsymbol\alpha}_{ij}(\boldsymbol x)$ is smooth with respect to $\boldsymbol x$, and it satisfies the expansion $\boldsymbol R^{\boldsymbol\alpha}(\boldsymbol x)=\boldsymbol R^{\boldsymbol\alpha}(0)+O(|\boldsymbol x|)\mathrm{~as~}|\boldsymbol x|\to 0$. For $\boldsymbol x,\boldsymbol y\in\partial D$, we have $s \boldsymbol x,s \boldsymbol y\in \partial B$. Then, the
quasi-periodic  single-layer potential satisfies
\begin{align}\label{S}
\boldsymbol{\mathcal{S}}^{\boldsymbol\alpha,0}_{B}[\boldsymbol\varphi](s \boldsymbol x)&=s^{2}\int_{\partial D}\boldsymbol G^{\boldsymbol\alpha,0}(s \boldsymbol x-s \boldsymbol y)\boldsymbol \varphi(s \boldsymbol y)\mathrm{d}\sigma(\boldsymbol y)\nonumber\\
&=s^{2}\int_{\partial D}\boldsymbol G^{0}(s \boldsymbol x-s \boldsymbol y)\widetilde{\boldsymbol \varphi}(\boldsymbol y)\mathrm{d}\sigma(\boldsymbol y)+s^{2}\int_{\partial D}\boldsymbol R^{\boldsymbol\alpha}(0)\widetilde{\boldsymbol\varphi}(\boldsymbol y)\mathrm{d}\sigma(\boldsymbol y)\nonumber\\
&\quad +s^{2}\int_{\partial D}\mathcal{O}(s|\boldsymbol x-\boldsymbol y|)\widetilde{\boldsymbol\varphi}(\boldsymbol y)\mathrm{d}\sigma(\boldsymbol y)\nonumber\\
&=s\int_{\partial D}\boldsymbol G^{0}(\boldsymbol x-\boldsymbol y)\widetilde{\boldsymbol \varphi}(\boldsymbol y)\mathrm{d}\sigma(\boldsymbol y)+s^{2}\boldsymbol R^{\boldsymbol \alpha}(0)\int_{\partial D}\widetilde{\boldsymbol\varphi}(\boldsymbol y)\mathrm{d}\sigma(\boldsymbol y)+\mathcal{O}(s^3)\int_{\partial D}\widetilde{\boldsymbol\varphi}(\boldsymbol y)\mathrm{d}\sigma(\boldsymbol y)\nonumber\\
&=s\left(\boldsymbol{\mathcal{S}}^0_{D}[\widetilde{\boldsymbol\varphi}](\boldsymbol x)+s \boldsymbol R^{\boldsymbol\alpha}(0)\int_{\partial D}\widetilde{\boldsymbol\varphi}(\boldsymbol y)\mathrm{d}\sigma(\boldsymbol y)+\mathcal{O}(s^{2})\int_{\partial D}\widetilde{\boldsymbol\varphi}(\boldsymbol y)\mathrm{d}\sigma(\boldsymbol y)\right),
\end{align}
where $\widetilde{\boldsymbol\varphi}(\boldsymbol x):=\boldsymbol\varphi(s \boldsymbol x)$.

Using the  Neumann series, it follows from \eqref{S} that
\begin{align*}
&(\boldsymbol{\mathcal{S}}^{\boldsymbol\alpha,0}_{B})^{-1}[\boldsymbol e_i|_{\partial B}](s \boldsymbol x)\\
& =s^{-1}\left((\boldsymbol{\mathcal{S}}^0_{D})^{-1}[\boldsymbol e_i|_{\partial D}]-s \boldsymbol (\boldsymbol{\mathcal{S}}^0_{D})^{-1}\left[\boldsymbol R^{\boldsymbol\alpha}(0)\int_{\partial D}(\boldsymbol{\mathcal{S}}^0_{D})^{-1}[\boldsymbol e_i|_{\partial D}](\boldsymbol x)\mathrm{d}\sigma(\boldsymbol x)\right]+\mathcal{O}\left(s^2\right)\right).
\end{align*}
Thus, we obtain
\begin{align}\label{GB}
Q^{B,\boldsymbol\alpha}_{ij}&=-\int_{\partial B}(\frac{1}{2}\boldsymbol{\mathcal{I}}+(\boldsymbol{\mathcal{K}}^{\boldsymbol\alpha,0}_{B})^*)(\boldsymbol{\mathcal{S}}^{\boldsymbol\alpha,0}_{B})^{-1}[\boldsymbol e_i|_{\partial B}](s \boldsymbol x)\cdot \boldsymbol e_j\mathrm{d}\sigma(s \boldsymbol x)\nonumber\\
&=-\int_{\partial B}(\boldsymbol{\mathcal{S}}^{\boldsymbol\alpha,0}_{B})^{-1}[\boldsymbol e_i|_{\partial B}](s \boldsymbol x)\cdot \boldsymbol e_j\mathrm{d}\sigma(s \boldsymbol x)\nonumber\\
&=-s\int_{\partial D}(\boldsymbol{\mathcal{S}}^0_{D})^{-1}[\boldsymbol e_i|_{\partial D}]\cdot\boldsymbol e_j\mathrm{d}\sigma(\boldsymbol x)\nonumber\\
&\quad +s^2\int_{\partial D}(\boldsymbol{\mathcal{S}}^0_{D})^{-1}\left[\sum^3_{l=1}\left(\boldsymbol R^{\boldsymbol\alpha}(0)\int_{\partial D}(\boldsymbol{\mathcal{S}}^0_{D})^{-1}[\boldsymbol e_i|_{\partial D}](\boldsymbol x)\mathrm{d}\sigma(\boldsymbol x)\cdot \boldsymbol e_l\right)\boldsymbol e_l\right]\cdot \boldsymbol e_j\mathrm{d}\sigma(\boldsymbol x)+\mathcal{O}(s^3)\nonumber\\
& =s Q^D_{ij}-s^2\sum^3_{l=1}\boldsymbol R^{\boldsymbol\alpha}(0)\int_{\partial D}(\boldsymbol{\mathcal{S}}^0_{D})^{-1}[\boldsymbol e_i|_{\partial D}](\boldsymbol x)\mathrm{d}\sigma (\boldsymbol x)\cdot (Q^D_{lj}\boldsymbol e_l)+\mathcal{O}(s^3),
\end{align}
where 
\[
Q^D_{ij}=-\int_{\partial D}(\boldsymbol{\mathcal{S}}^0_{D})^{-1}[\boldsymbol e_i|_{\partial D}](\boldsymbol x)\cdot\boldsymbol e_j\mathrm{d}\sigma(\boldsymbol x).
\]
Moreover, we can get
\begin{align}\label{GDB}
Q^B_{ij}&=-\int_{\partial B}(\boldsymbol{\mathcal{S}}^0_{B})^{-1}[\boldsymbol e_i|_{\partial B}](s \boldsymbol x)\cdot \boldsymbol e_{j}\mathrm{d}\sigma(s \boldsymbol x)\nonumber\\
&=-s^2\int_{\partial D}s^{-1}(\boldsymbol{\mathcal{S}}^0_{D})^{-1}[\boldsymbol e_i|_{\partial D}](\boldsymbol x)\cdot \boldsymbol e_{j}\mathrm{d}\sigma(\boldsymbol x)=s Q^D_{ij}.
\end{align}
Combining \eqref{GB} and \eqref{GDB}, we obtain \eqref{dilute}.
\end{proof}

Let  $\beta_i, i=1, 2, 3,$ denote the eigenvalues of the matrix $\boldsymbol{Q}^B=(Q^B_{ij})^{3}_{i,j=1}$, and assume without loss of generality that $\beta_1\leq \beta_2\leq \beta_3$.

\begin{theo}
Let $B=sD$ with $0<s\ll1$ and $\frac{\epsilon}{s^2}\ll 1$. For $|\boldsymbol\alpha|>a>0$ and $\omega\in\mathbb{C}$ with $ \omega\rightarrow 0$, there exists a subwavelength phononic bandgap just above $\sqrt{\frac{\beta_3}{\rho|B|}}\epsilon^{\frac{1}{2}}$, which lies in the interval $[\omega_{\mathrm{min}},\omega_{\mathrm{max}}].$
Specifically, when $B$ is a ball, the subwavelength bandgap opening occurs just above $\omega_{\mathrm{min}}$.
Here, $\omega_{\mathrm{min}}$ and $\omega_{\mathrm{max}}$ denote the minimum and maximum subwavelength resonant frequencies, respectively, in the scenario where a single hard obstacle is embedded in a soft elastic medium.
\end{theo}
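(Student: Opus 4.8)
The plan is to combine the dilute-regime asymptotics of Lemma~\ref{le:dilute} with the already-established band-structure result of Theorem~\ref{bandgap}, transferring everything from the fixed reference cell $D$ to the rescaled resonator $B=sD$. The starting point is Theorem~\ref{theo:fre} applied with $D$ replaced by $B$: the subwavelength resonant frequencies are $\omega^{\boldsymbol\alpha}_i(\epsilon)=\sqrt{\beta^{B,\boldsymbol\alpha}_i/(\rho|B|)}\,\epsilon^{1/2}+O(\epsilon^{3/2})$ for $i=1,2,3$, where $\beta^{B,\boldsymbol\alpha}_i$ are the eigenvalues of the Hermitian positive-definite matrix $\boldsymbol Q^{B,\boldsymbol\alpha}$. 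By Lemma~\ref{le:dilute}, $\boldsymbol Q^{B,\boldsymbol\alpha}=\boldsymbol Q^{B}+O(s^{2})$ uniformly for $|\boldsymbol\alpha|>a$, and by \eqref{GDB} we have $\boldsymbol Q^{B}=s\,\boldsymbol Q^{D}$; hence the eigenvalues satisfy $\beta^{B,\boldsymbol\alpha}_i=\beta^{B}_i+O(s^{2})$ with $\beta^{B}_i=s\,\beta^{D}_i$ (here $\beta^{D}_i$ are the eigenvalues of the single-obstacle matrix $\boldsymbol Q^{D}$). Since $|B|=s^{3}|D|$, the leading term becomes $\omega^{\boldsymbol\alpha}_{i;0}=\sqrt{\beta^{B}_i/(\rho|B|)}\,\epsilon^{1/2}=\sqrt{\beta^{D}_i/(\rho s^{2}|D|)}\,\epsilon^{1/2}$, so the assumption $\epsilon/s^{2}\ll1$ is exactly what keeps these frequencies in the subwavelength regime, i.e.\ $\omega^{\boldsymbol\alpha}_{i;0}\to0$. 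The relative perturbation of $\beta^{B,\boldsymbol\alpha}_i$ away from $\beta^{B}_i$ is $O(s^{2})$, which is small compared with the $O(1)$ spread of the $\beta^{D}_i$, so for $s$ small the three bands $[0,\max_{\boldsymbol\alpha\neq0}\omega^{\boldsymbol\alpha}_i]$, $i=1,2,3$, are controlled by $\sqrt{\beta^{D}_i/(\rho s^{2}|D|)}\,\epsilon^{1/2}$ up to a factor $1+O(s^{2})$.

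Next I would invoke Theorem~\ref{bandgap} verbatim with $(D,\boldsymbol Q^{\boldsymbol\alpha},|D|)$ replaced by $(B,\boldsymbol Q^{B,\boldsymbol\alpha},|B|)$: its proof used only that $\boldsymbol Q^{B,\boldsymbol\alpha}$ is Hermitian positive definite (Lemma~\ref{G:symmetric}, whose argument is geometry-independent), the continuity of $\omega^{\boldsymbol\alpha}_i$ in $\boldsymbol\alpha$ and $\epsilon$, the vanishing $\omega^{0}_i=0$, and the gap $\omega^{0}_{d+1}\neq0$ coming from the full multiplicity $d=3$ of the zero eigenvalue at $\boldsymbol\alpha=0$ — none of which is affected by the rescaling $D\mapsto sD$. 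This yields a subwavelength bandgap $[\omega^{\ast}+\eta,\omega^{\sharp}]\subset[\max_{\boldsymbol\alpha\neq0}\omega^{\boldsymbol\alpha}_{3},\min_{\boldsymbol\alpha}\omega^{\boldsymbol\alpha}_{4}]$, with $\omega^{\ast}=\max_{\boldsymbol\alpha\neq0}\omega^{\boldsymbol\alpha}_{3;0}$. Because $\beta^{B,\boldsymbol\alpha}_3=\beta^{B}_3(1+O(s^2))=s\beta^{D}_3(1+O(s^2))$, the gap opens just above $\sqrt{\beta^{B}_3/(\rho|B|)}\,\epsilon^{1/2}=\sqrt{\beta_3/(\rho|B|)}\,\epsilon^{1/2}$, which is the claimed location $[\omega_{\min},\omega_{\max}]$ once we identify $\omega_{\min}$ and $\omega_{\max}$ with the extreme subwavelength resonances of the single-obstacle problem: by the results of \cite{CGLR2024,LZ2024}, a single hard elastic obstacle in a soft background has subwavelength resonances whose leading terms are $\sqrt{\beta^{D}_i/(\rho|D|)}\,\epsilon^{1/2}$ scaled appropriately, with minimum $\omega_{\min}$ and maximum $\omega_{\max}$, and the $O(s^2)$ corrections from $\boldsymbol R^{\boldsymbol\alpha}(0)$ are negligible to leading order.

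For the special case of a ball, I would use the explicit diagonalization of $\boldsymbol{\mathcal S}^{0}_{B}$ on the sphere (available via \cite{chang2007spectral,BochaoE}): by rotational symmetry $\boldsymbol Q^{B}$ is a scalar multiple of the identity, so $\beta_1=\beta_2=\beta_3$, meaning $\omega_{\min}=\omega_{\max}$ in the single-obstacle problem and the three leading-order bands of the crystal collapse to one value $\sqrt{\beta_3/(\rho|B|)}\,\epsilon^{1/2}$. Then $\omega^{\ast}=\omega_{\min}=\sqrt{\beta_3/(\rho|B|)}\,\epsilon^{1/2}+O(\epsilon^{3/2})$ and $\max_{\boldsymbol\alpha\neq0}\omega^{\boldsymbol\alpha}_{3}$ equals this same value up to $O(s^2)$-relative and $O(\epsilon)$-additive corrections, so the bandgap opening occurs precisely just above $\omega_{\min}$, as asserted. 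The main obstacle I expect is bookkeeping the two independent small parameters $s$ and $\epsilon$ together: one must verify that the $O(s^3)$ error in $\boldsymbol Q^{B,\boldsymbol\alpha}$ and the $O(\epsilon^{3/2})$ error in the frequency expansion do not conspire to destroy the strict inequality $\min_{\boldsymbol\alpha}\omega^{\boldsymbol\alpha}_{4}>\omega^{\ast}+\eta$, which requires choosing $\epsilon_0$ depending on $s$ (or fixing $s$ first, then sending $\epsilon\to0$) and checking that the perturbation of the fourth band away from its $\boldsymbol\alpha=0$ value $\omega^{0}_4\neq0$ stays bounded below; this is exactly the kind of nested-limit argument already carried out in the proof of Theorem~\ref{bandgap}, so it adapts with only routine modifications.
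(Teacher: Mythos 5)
The first half of your proposal (rescaling $D\mapsto B=sD$, using Lemma \ref{le:dilute} to write $\boldsymbol Q^{B,\boldsymbol\alpha}=\boldsymbol Q^{B}+\mathcal O(s^{2})$ with $\boldsymbol Q^{B}=s\,\boldsymbol Q^{D}$, and invoking Theorem \ref{bandgap} for the rescaled resonator) matches the paper. The genuine gap is the step placing the opening frequency $\sqrt{\beta_3/(\rho|B|)}\,\epsilon^{1/2}$ inside $[\omega_{\mathrm{min}},\omega_{\mathrm{max}}]$. You assert that the single-obstacle resonances have leading terms $\sqrt{\beta^{D}_i/(\rho|D|)}\,\epsilon^{1/2}$, i.e.\ that they are governed by the same $3\times3$ matrix. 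That is not so: for a single hard inclusion the kernel of the static Neumann problem consists of all rigid motions, of dimension $\tfrac{d(d+1)}{2}=6$ in three dimensions, so the single-obstacle problem has six subwavelength resonances governed by a $6\times6$ Hermitian matrix, whereas quasi-periodicity in the crystal eliminates the rotations and leaves only the $3\times3$ matrix $\boldsymbol Q^{B,\boldsymbol\alpha}$. The paper bridges exactly this mismatch by combining Theorem 3.10 of \cite{CGLR2024} with Cauchy's interlacing theorem \cite{HJ2012Matrix}: the largest eigenvalue $\beta_3$ of the smaller matrix is squeezed between the extreme eigenvalues of the $6\times6$ single-obstacle matrix, which is what yields $\sqrt{\beta_3/(\rho|B|)}\,\epsilon^{1/2}\in[\omega_{\mathrm{min}},\omega_{\mathrm{max}}]$. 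Your proposal contains no such comparison, so the containment claim is unproven.

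The same misidentification breaks your ball argument. Rotational symmetry does make $\boldsymbol Q^{B}$ a scalar multiple of the identity, so $\beta_1=\beta_2=\beta_3$; but it does not give $\omega_{\mathrm{min}}=\omega_{\mathrm{max}}$ for the single obstacle: by \cite{LZ2024}, $\omega_{\mathrm{min}}=\sqrt{9\mu(2\mu+\lambda)/((5\mu+2\lambda)\rho r^{2})}\,\epsilon^{1/2}$ (translational modes) while $\omega_{\mathrm{max}}=\sqrt{15\mu/(\rho r^{2})}\,\epsilon^{1/2}$ (rotational modes), and these are distinct. The paper's route is an explicit computation of $\boldsymbol Q^{B}$ on the sphere via the mean value property of harmonic functions, giving $\beta_i=\frac{12\pi\mu r(2\mu+\lambda)}{5\mu+2\lambda}$ and hence $\sqrt{\beta_3/(\rho|B|)}\,\epsilon^{1/2}=\omega_{\mathrm{min}}$, so the gap opens just above $\omega_{\mathrm{min}}$; your proposal reaches that conclusion only through the erroneous identity $\omega_{\mathrm{min}}=\omega_{\mathrm{max}}$. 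To repair the proof you need both the interlacing comparison with the $6\times6$ single-obstacle matrix in the general case and the explicit evaluation of $\boldsymbol Q^{B}$ (or an equivalent identification of the crystal's dilute leading frequency with the translational single-obstacle resonance) in the ball case.
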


\begin{proof}
According to Lemma \ref{le:dilute}, we obtain that $\omega^{\boldsymbol\alpha}_{i,0}\approx \sqrt{\frac{\beta_i}{\rho|B|}}\epsilon^{\frac{1}{2}}$ in the dilute case, which implies $\omega^{\ast}\approx\sqrt{\frac{\beta_3}{\rho|B|}}\epsilon^{\frac{1}{2}}$. By incorporating Theorem \ref{bandgap}, it is shown that a bandgap exists above the frequency $\sqrt{\frac{\beta_3}{\rho|B|}}\epsilon^{\frac{1}{2}}$. Additionally, by applying Theorem 3.10 from \cite{CGLR2024} and Cauchy's interlacing theorem \cite[p. 242; Theorem 4.3.17]{HJ2012Matrix}, the frequency $\sqrt{\frac{\beta_3}{\rho|B|}}\epsilon^{\frac{1}{2}}$ lies in the interval $[\omega^s_{\mathrm{min}},\omega^s_{\mathrm{max}}].$

Specifically, the the eigenvalues $(\beta_i)^3_{i=1}$ can be determined when the hard elastic obstacle $B$ is a ball of radius $r$. By exploiting the symmetry properties of the ball, we can derive  that for $\boldsymbol x\in \partial B$, 
\begin{align*}
&\int_{\partial B}G_{ij}^0(\boldsymbol x-\boldsymbol y) \mathrm{d}\sigma(\boldsymbol y)\\
&=-\frac{\boldsymbol{\delta}_{ij}}{8\pi}\left(\frac{1}{\mu}+\frac{1}{\lambda+2\mu}\right) \int_{\partial B} \frac{1}{|\boldsymbol x-\boldsymbol y|} \mathrm{d}\sigma(\boldsymbol y) - \frac{1}{8\pi}\left(\frac{1}{\mu}-\frac{1}{\lambda+2\mu}\right) \int_{\partial B} \frac{(x_i - y_i)(x_j - y_j)}{|\boldsymbol x-\boldsymbol y|^3} \mathrm{d}\sigma(\boldsymbol y)\\
&=-\frac{\boldsymbol{\delta}_{ij}}{8\pi}\left(\frac{1}{\mu}+\frac{1}{\lambda+2\mu}\right) \int_{\partial B} \frac{1}{|\boldsymbol x-\boldsymbol y|} \mathrm{d}\sigma(\boldsymbol y) - \frac{\boldsymbol{\delta}_{ij}}{8\pi}\left(\frac{1}{\mu}-\frac{1}{\lambda+2\mu}\right) \int_{\partial B} \frac{(x_i - y_i)^2}{|\boldsymbol x-\boldsymbol y|^3} \mathrm{d}\sigma(\boldsymbol y)\\
&=-\frac{\boldsymbol{\delta}_{ij}}{8\pi}\left(\frac{1}{\mu}+\frac{1}{\lambda+2\mu}\right) \int_{\partial B} \frac{1}{|\boldsymbol x-\boldsymbol y|} \mathrm{d}\sigma(\boldsymbol y)-\frac{\boldsymbol{\delta}_{ij}}{24\pi}\left(\frac{1}{\mu}+\frac{1}{\lambda+2\mu}\right) \int_{\partial B} \frac{1}{|\boldsymbol x-\boldsymbol y|} 
\mathrm{d}\sigma(\boldsymbol y)\\
&=-\boldsymbol{\delta}_{ij} \frac{5\mu + 2\lambda}{3\mu(2\mu + \lambda)} \frac{1}{4\pi} \int_{\partial B} \frac{1}{|\boldsymbol x-\boldsymbol y|} \mathrm{d}\sigma(\boldsymbol y)\\
&=-\boldsymbol{\delta}_{ij} \frac{5\mu + 2\lambda}{3\mu(2\mu + \lambda)}r,
\end{align*}
where the last equality follows from the mean value property of harmonic functions \cite[p. 180]{strauss2007partial}. Furthermore, we obtain
\begin{align*}
\boldsymbol{\mathcal{S}}^0_{B}[\boldsymbol e_i](\boldsymbol x)=\int_{\partial B}\boldsymbol G^0(\boldsymbol x-\boldsymbol y)\boldsymbol e_i\mathrm{d}\sigma(\boldsymbol x)=-\frac{5\mu + 2\lambda}{3\mu(2\mu + \lambda)}r\boldsymbol e_i.
\end{align*}
Thus, it follows that
\begin{align*}
Q^B_{ij}=&-\int_{\partial B}(\boldsymbol{\mathcal{S}}^0_{B})^{-1}[\boldsymbol e_i|_{\partial B}](\boldsymbol x)\cdot\boldsymbol e_j\mathrm{d}\sigma(\boldsymbol x)
=\int_{\partial B}\boldsymbol{\delta}_{ij}\frac{3\mu(2\mu + \lambda)}{(5\mu + 2\lambda)r}\mathrm{d}\sigma(\boldsymbol x)
=\boldsymbol{\delta}_{ij}\frac{12\mu\pi r(2\mu + \lambda)}{5\mu + 2\lambda}. 
\end{align*}
This implies that
\[
\beta_1=\beta_2=\beta_3=\frac{12\mu\pi r(2\mu + \lambda)}{5\mu + 2\lambda}.
\]
Hence,
\[
\sqrt{\frac{\beta_i}{\rho|B|}}\epsilon^{\frac{1}{2}}=\sqrt{\frac{9\mu(2\mu + \lambda)}{(5\mu + 2\lambda)\rho r^2}}\epsilon^{\frac{1}{2}},\quad i=1,2,3.
\]
In addition, when $B$ is a ball of radius $r$, it follows from \cite{LZ2024} that
\[
\omega_{\mathrm{min}}=\sqrt{\frac{9\mu(2\mu + \lambda)}{(5\mu + 2\lambda)\rho r^2}}\epsilon^{\frac{1}{2}},\quad \omega_{\mathrm{max}}=\sqrt{\frac{15\mu}{\rho r^2}}\epsilon^{\frac{1}{2}},
\]
which completes the proof. 
\end{proof}

\section{Concluding remarks}\label{sec:conclusion}

This paper presents the first rigorous mathematical analysis of the existence of a subwavelength phononic bandgap, a phenomenon previously observed in physical experiments reported in \cite{LZMZYCS2000}. By employing the quasi-periodic DtN map in the low-frequency regime, we reformulate the original elastic scattering problem in the unit cell as a Neumann-type problem within the resonator. The subwavelength resonant frequencies are characterized by the vanishing determinant of a Hermitian matrix, which naturally arises from an auxiliary sesquilinear form. Through asymptotic analysis, we derive explicit asymptotic expansions for both the subwavelength resonant frequencies and their corresponding nontrivial solutions. Furthermore, we rigorously establish the existence of a subwavelength bandgap. As an application, we examine two specific cases: the dilute limit of phononic crystals in three dimensions and the scenario where the resonator is a ball. Our results contribute to the theoretical foundation of subwavelength phononic bandgaps induced by subwavelength resonances in elastic media.

In this work, we have not considered the two-dimensional dilute case primarily for the following two reasons:
\begin{enumerate}

\item[(1)] The single-layer potential operator $\boldsymbol{\mathcal{S}}^{0}_{B}:H^{-\frac{1}{2}}(\partial B)^2\rightarrow H^{\frac{1}{2}}(\partial B)^2$ may be non-invertible in two dimensions;

\item[(2)] The subwavelength resonant frequencies for a single rigid elastic obstacle have not yet been rigorously established in  $\mathbb{R}^2$.

\end{enumerate}
These challenges require further investigation. Additionally, future research will focus on exploring acoustic-elastic coupling systems and studying elastic wave propagation under time modulation.

 \end{document}